\documentclass[10pt, reqno]{amsart}
\usepackage{graphicx, amssymb, amsmath, amsthm, color}
\numberwithin{equation}{section}

\newcommand{\R}{\mathbb{R}}
\newcommand{\C}{\mathbb{C}}
\newcommand{\wh}[1]{\widehat{#1}}

\renewcommand{\Re}{\text{Re}}
\renewcommand{\Im}{\text{Im}}
\newcommand{\norm}[1]{\| #1\|}

\newcommand{\eps}{\varepsilon}

\def\smallint{\begingroup\textstyle \int\endgroup}

\newcommand{\xonorm}[3]{\|#1\|_{L_t^{#2}L_x^{#3}}}
\newcommand{\xnorm}[4]{\|#1\|_{L_t^{#2}L_x^{#3}(#4\times\R^3)}}
\newcommand{\xonorms}[2]{\|#1\|_{L_{t,x}^{#2}}}

\newcommand{\xnorms}[3]{\|#1\|_{L_{t,x}^{#2}(#3\times\R^3)}}

\newcommand{\M}{\mathcal{M}}

\newcommand{\nsc}{\vert\nabla\vert^{s_c}}

\newtheorem{theorem}{Theorem}[section]

\newtheorem{lemma}[theorem]{Lemma}

\newtheorem{corollary}[theorem]{Corollary}

\newtheorem{proposition}[theorem]{Proposition}

\theoremstyle{definition}
\newtheorem{definition}[theorem]{Definition}
\newtheorem{remark}[theorem]{Remark}

\theoremstyle{remark}

\begin{document}

\title[3D Radial NLS]{The radial defocusing nonlinear Schr\"odinger equation in three space dimensions}
\author{Jason Murphy}
\address{Department of Mathematics, UCLA,
               Los Angeles, CA 90095-1555, USA}
        \email{murphy@math.ucla.edu}

\begin{abstract} We study the defocusing nonlinear Schr\"odinger equation in three space dimensions. We prove that any radial solution that remains bounded in the critical Sobolev space must be global and scatter. In the energy-supercritical setting, we employ a space-localized Lin--Strauss Morawetz inequality of Bourgain. In the inter-critical regime, we prove long-time Strichartz estimates and frequency-localized Lin--Strauss Morawetz inequalities. 
\end{abstract}
\maketitle


\section{Introduction}\label{section:introduction}
	We consider the initial-value problem for the defocusing nonlinear Schr\"odinger equation (NLS) on $\R_t\times\R_x^3$:
	\begin{equation}\label{nls}
	\left\{\begin{array}{ll}
	(i\partial_t+\Delta)u=\vert u\vert^p u,
	\\ u(0,x)=u_0(x).
	\end{array}\right.
	\end{equation}
	
	This equation enjoys a scaling symmetry, namely
		\begin{equation}\label{eq:scaling}
		u(t,x)\mapsto\lambda^{2/p}u(\lambda^2 t,\lambda x),
		\end{equation}
which defines a notion of criticality for \eqref{nls}. In particular, the only homogeneous $L_x^2$-based Sobolev space that is left invariant by this rescaling is $\dot{H}_x^{s_c}(\R^3)$, where the \emph{critical regularity} is defined by $s_c:=3/2-2/p$. We study \eqref{nls} in both the \emph{inter-critical} ($0<s_c<1$) and \emph{energy-supercritical} ($s_c>1$) settings. We show that any radial solution to \eqref{nls} that remains bounded in $\dot{H}_x^{s_c}$ must be global and scatter. 

We begin with a few definitions.

\begin{definition}[Solution] A function $u:I\times\R^3\to\C$ is a \emph{(strong) solution} to \eqref{nls} if it belongs to $C_t\dot{H}_x^{s_c}(K\times\R^3)\cap L_{t,x}^{5p/2}(K\times\R^3)$ for any compact $K\subset I$ and obeys the Duhamel formula 
	$$u(t)=e^{it\Delta}u_0-i\smallint_0^t e^{i(t-s)\Delta}(\vert u\vert^p u)(s)\,ds$$
for any $t\in I$. We call $I$ the \emph{lifespan} of $u$; we say that $u$ is a \emph{maximal-lifespan solution} if it cannot be extended to any strictly larger interval. If $I=\R$ we call $u$ \emph{global}.
\end{definition}

\begin{definition}[Scattering size, blowup]\label{def:blowup} We define the \emph{scattering size} of a solution $u:I\times\R^3\to\C$ to \eqref{nls} by
		$$S_I(u):=\iint_{I\times\R^3}\vert u(t,x)\vert^{5p/2}\,dx\,dt.$$

If there exists $t_0\in I$ so that $S_{[t_0,\sup I)}(u)=\infty$, then we say $u$ \emph{blows up (forward in time)}. If there exists $t_0\in I$ so that $S_{(\inf I,t_0]}(u)=\infty$, then we say $u$ \emph{blows up (backward in time)}.

If $u$ is a global solution to \eqref{nls} that obeys $S_\R(u)<\infty$, then standard arguments show that $u$ \emph{scatters}, that is, there exist $u_\pm\in\dot{H}_x^{s_c}(\R^3)$ so that
		$$\lim_{t\to\pm\infty}\norm{u(t)-e^{it\Delta}u_\pm}_{\dot{H}_x^{s_c}(\R^3)}=0.$$
\end{definition}
	
	Our main result is the following.
	
\begin{theorem}\label{thm:main} Let $s_c\in(0,\tfrac12)\cup(\tfrac34,1)\cup(1,\tfrac32).$
Suppose $u:I\times\R^3\to\C$ is a radial maximal-lifespan solution to \eqref{nls} such that $\norm{u}_{L_t^\infty\dot{H}_x^{s_c}(I\times\R^3)}<\infty$. Then $u$ is global and scatters, with
	$$S_\R(u)\leq C(\norm{u}_{L_t^\infty\dot{H}_x^{s_c}})$$
for some function $C:[0,\infty)\to[0,\infty).$ 
\end{theorem}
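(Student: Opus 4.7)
The plan is to run the Kenig--Merle concentration-compactness / rigidity roadmap. Assume the theorem fails. Then there is a critical threshold $E_c$ for the $L_t^\infty\dot H_x^{s_c}$ norm below which every radial maximal-lifespan solution to \eqref{nls} scatters with a uniform bound on $S_\R$, but the scattering bound diverges as one approaches $E_c$. Extracting a sequence of radial near-extremizers, I would apply a linear profile decomposition in $\dot H_x^{s_c}(\R^3)$ adapted to radial data, together with the stability/perturbation theory for \eqref{nls}, to distill a nonzero radial maximal-lifespan solution $u_*:I\times\R^3\to\C$ with $\norm{u_*}_{L_t^\infty\dot H_x^{s_c}}\leq E_c$ that blows up (in the sense of Definition~\ref{def:blowup}) in both time directions and whose orbit is precompact in $\dot H_x^{s_c}$ modulo scaling. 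Concretely, there exists $N:I\to(0,\infty)$ such that $\{N(t)^{-2/p}u_*(t,\cdot/N(t)):t\in I\}$ is precompact in $\dot H_x^{s_c}$. Radiality of $u_*$ removes the translation parameter from the decomposition.

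Next, a standard rescaling/local-theory argument together with the behavior of $N$ reduces the analysis to three enemy scenarios: a finite-time blowup solution, a soliton-like solution on $[0,\infty)$ with $N\equiv 1$, or a frequency cascade on $[0,\infty)$ with $N(t)\to 0$ or $N(t)\to\infty$. Ruling out these enemies splits along the two regimes in the hypothesis. In the energy-supercritical case $s_c\in(1,\tfrac32)$, I would invoke Bourgain's space-localized Lin--Strauss Morawetz inequality on a ball whose radius is commensurate with the spatial concentration scale of $u_*$; radial $\dot H_x^{s_c}$ precompactness furnishes the decay at infinity required to control the truncation errors, and the resulting Morawetz bound contradicts the uniform lower bound on mass inside the ball forced by precompactness. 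In the inter-critical case $s_c\in(0,\tfrac12)\cup(\tfrac34,1)$, I would first develop long-time Strichartz estimates tailored to the almost-periodic $u_*$, and then establish a frequency-localized Lin--Strauss Morawetz inequality for the part of $u_*$ at frequencies $\gtrsim N(t)$, using the long-time Strichartz bounds to absorb the error terms generated by commuting the frequency cutoff past the nonlinearity. Again the resulting estimate contradicts almost periodicity.

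The main obstacle is the inter-critical step. The frequency-localized Morawetz identity produces error terms whose control demands that certain Strichartz and bilinear exponents lie in an admissible range, and the excluded window $s_c\in[\tfrac12,\tfrac34]$ is precisely where this delicate balance breaks down. A second difficulty, closely tied to the first, is that eliminating the frequency-cascade and self-similar enemies in the inter-critical regime requires an auxiliary regularity upgrade for $u_*$ at high frequencies so that conservation of mass can be brought to bear; the radial hypothesis enters crucially both here and in the decay estimates needed to close the space-localized Morawetz argument in the energy-supercritical range.
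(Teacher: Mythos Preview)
Your roadmap is right, and the energy-supercritical treatment matches the paper. But the inter-critical part contains a genuine error in the direction of the frequency cutoff. The Lin--Strauss inequality has $\norm{\vert\nabla\vert^{1/2}u}_{L_t^\infty L_x^2}^2$ on the right-hand side. When $s_c>1/2$ one suppresses \emph{low} frequencies (work with $u_{>N}$) so that Bernstein controls $\dot H^{1/2}$ by $\dot H^{s_c}$; this is what you wrote, and it is correct for $3/4<s_c<1$. But for $0<s_c<1/2$ the situation reverses: one must suppress \emph{high} frequencies and prove the Morawetz inequality for $u_{\leq N}$. Your proposal localizes to frequencies $\gtrsim N(t)$ uniformly across the inter-critical range, which would leave the right-hand side of the Morawetz inequality uncontrolled when $s_c<1/2$. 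The paper treats the two sub-regimes with separate long-time Strichartz estimates and separate frequency-localized Morawetz inequalities, truncating in opposite directions.

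This asymmetry also affects the cascade argument. For $3/4<s_c<1$ one proves additional \emph{decay} (bounds in $\dot H^{-\eps}$) and contradicts conservation of \emph{mass}, as you suggest. For $0<s_c<1/2$ one instead proves additional \emph{regularity} (bounds in $\dot H^{1+\eps}$) and contradicts conservation of \emph{energy}; mass is unavailable since $s_c>0$. Finally, your remark that $s_c\in[1/2,3/4]$ is ``precisely where the balance breaks down'' is a misreading: the paper's arguments for $3/4<s_c<1$ work verbatim down to $1/2$; that range is omitted only because it was already handled in the non-radial setting.
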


Equivalently, we see that any radial solution that fails to scatter must blow up its $\dot{H}_x^{s_c}$-norm. We note here that the cases covered in Theorem~\ref{thm:main} correspond to choosing $p\in(4/3,2)\cup(8/3,4)\cup(4,\infty)$. The range $p\in (2,8/3]$ was already addressed in the non-radial setting \cite{Mur:swamprat}. 

The motivation for Theorem~\ref{thm:main} comes from the celebrated global well-posedness and scattering results for the defocusing mass- and energy-critical NLS. In space dimension $d$ the equation \eqref{nls} is \emph{mass-critical} for $p=4/d$ and \emph{energy-critical} for $p=4/(d-2)$ (and $d\geq 3$). These equations are distinguished by the presence of a conserved quantity at the level of the critical regularity. For the mass-critical NLS, the critical regularity is $s_c=0$ and the conserved quantity is the \emph{mass}, defined by
			\begin{equation}\nonumber
			M[u(t)]:=\int_{\R^d}\vert u(t,x)\vert^2\,dx.
			\end{equation}
For the energy-critical NLS, the critical regularity is $s_c=1$ and the conserved quantity is the \emph{energy}, defined by
			\begin{equation}\nonumber
			E[u(t)]:=\int_{\R^d}\tfrac12\vert\nabla u(t,x)\vert^2+\tfrac{1}{p+2}\vert u(t,x)\vert^{p+2}\,dx.
			\end{equation}

For the defocusing mass- and energy-critical NLS, it is now known that arbitrary initial data in the critical Sobolev space lead to global solutions that scatter \cite{Bou, CKSTT, Dod3, Dod2, Dod1, Gri, KTV, KVZ, RV, Tao, TVZ:sloth, Vis, Vis2}. (See also \cite{DodF, KM:focusing, KTV,  KV:focusing, KVZ} for results in the focusing case.) One of the key difficulties in establishing these results was the fact that none of the known monotonicity formulae (that is, Morawetz estimates) scale like the mass or energy. It was Bourgain's \emph{induction on energy} technique that showed how one can move beyond this difficulty: by finding solutions that concentrate on a characteristic length scale, one can `break' the scaling symmetry of the problem and hence bring the available Morawetz estimates back into play, despite their non-critical scaling.

In the mass- and energy-critical settings one has some \emph{a priori} control over solutions due to the conservation laws. Specifically, in the defocusing setting one has uniform (in time) $\dot{H}_x^{s_c}$-bounds. For $s_c\notin\{0,1\}$, one has no such \emph{a priori} control. However, the success of the techniques developed to treat the mass- and energy-critical problems inspires the following conjecture: for any $s_c>0$, any solution to defocusing NLS that remains bounded in $\dot{H}_x^{s_c}$ will be global and scatter. 

Progress has been made on this conjecture in both the inter-critical and energy-supercritical settings in dimensions $d\geq 3$ \cite{KM:intercritical, KV:supercritical, MiaMurZhe, Mur:swamprat, Mur:half}. In three dimensions, the case $s_c=1/2$ was treated by Kenig--Merle \cite{KM:intercritical}, while the range $1/2<s_c\leq 3/4$ was addressed by the author in \cite{Mur:swamprat}. Restricting to the case of radial solutions, we consider all remaining cases for which $s_c>0$ in three dimensions. In particular, we address the energy-supercritical regime for the first time in three dimensions, as well as the range $0<s_c<1/2$ for the first time in any dimension. The origin of the restriction to the radial setting and the possibility of extending Theorem~\ref{thm:main} to the non-radial case will be discussed below.


\subsection{Outline of the proof of Theorem~\ref{thm:main}} We begin by recording a local well-posedness result for \eqref{nls}.

\begin{theorem}[Local well-posedness]\label{thm:lwp} Let $s_c\geq 0$. For any $u_0\in\dot{H}_x^{s_c}(\R^3)$ and $t_0\in\R$, there exists a unique maximal-lifespan solution $u:I\times\R^3\to\C$ to \eqref{nls} with $u(t_0)=u_0.$ Furthermore
	\begin{itemize}
	\item (Local existence) $I$ is an open neighborhood of $t_0$.
	\item (Blowup criterion) If $\sup I$ is finite, then $u$ blows up forward in time in the sense of Definition~\ref{def:blowup}. If $\inf I$ is finite, then $u$ blows up backward in time.
	\item (Scattering and wave operators) If $\sup I=\infty$ and $u$ does not blow up forward in time, then $u$ scatters forward in time. That is, there exists $u_+\in\dot{H}_x^{s_c}(\R^3)$ so that
		\begin{equation}\label{eq:scatter}
		\lim_{t\to\infty}\norm{u(t)-e^{it\Delta}u_+}_{\dot{H}_x^{s_c}(\R^3)}=0.
		\end{equation}
Conversely, for any $u_+\in\dot{H}_x^{s_c}(\R^3)$, there exists a unique solution to \eqref{nls} defined in a neighborhood of $t=\infty$ such that \eqref{eq:scatter} holds. The analogous statements hold backward in time.
	\item (Small data scattering) If $\norm{u_0}_{\dot{H}_x^{s_c}(\R^3)}$ is sufficiently small, then $u$ is global and scatters, with $S_\R(u)\lesssim\norm{u_0}_{\dot{H}_x^{s_c}(\R^3)}^{5p/2}.$ 
	\end{itemize}
\end{theorem}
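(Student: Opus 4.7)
The proof is a standard Strichartz-plus-contraction argument in a scale-invariant function space, so I sketch only the main steps. The plan is to introduce the Duhamel map
$$\Phi(u)(t) := e^{i(t-t_0)\Delta}u_0 - i\smallint_{t_0}^t e^{i(t-s)\Delta}(|u|^p u)(s)\,ds$$
and solve $u=\Phi(u)$ by Banach's fixed-point theorem on a ball in a space of the form
$$X(I) := \bigl\{u : \||\nabla|^{s_c}u\|_{S(I)} + \|u\|_{L_{t,x}^{5p/2}(I\times\R^3)} < \infty\bigr\},$$
where $S(I)$ denotes a suitable admissible Strichartz norm. By Strichartz estimates,
$$\|\Phi(u)\|_{X(I)} \lesssim \|u_0\|_{\dot{H}_x^{s_c}} + \bigl\||\nabla|^{s_c}(|u|^p u)\bigr\|_{N(I)},$$
with $N(I)$ the dual Strichartz norm, while H\"older's inequality and the fractional chain rule give
$$\bigl\||\nabla|^{s_c}(|u|^p u)\bigr\|_{N(I)} \lesssim \|u\|_{L_{t,x}^{5p/2}(I\times\R^3)}^p \||\nabla|^{s_c}u\|_{S(I)}.$$
Shrinking $I$ around $t_0$ makes $\|e^{i(t-t_0)\Delta}u_0\|_{L_{t,x}^{5p/2}(I\times\R^3)}$ arbitrarily small, after which $\Phi$ contracts on a small ball of $X(I)$. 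This yields local existence, uniqueness, and continuous dependence on the data; taking $I=\R$ with $\|u_0\|_{\dot{H}_x^{s_c}}$ small also gives the small-data scattering statement.

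For the blowup criterion I argue by contraposition: if $\sup I = T^* < \infty$ and $S_{[t_0,T^*)}(u)<\infty$, one partitions $[t_0,T^*)$ into finitely many sub-intervals on which $\|u\|_{L_{t,x}^{5p/2}}$ is small, then uses the nonlinear estimate above together with Strichartz to bootstrap a bound on $\||\nabla|^{s_c}u\|_{S([t_0,T^*))}$. This in particular defines a limit $u(T^*)\in\dot{H}_x^{s_c}$, whereupon the local theory at time $T^*$ extends the solution past $T^*$, contradicting maximality. Scattering under $\sup I = \infty$ and $S_{[t_0,\infty)}(u)<\infty$ follows from the same nonlinear estimate: it shows that $e^{-it\Delta}u(t) = u_0 - i\smallint_{t_0}^t e^{-is\Delta}(|u|^p u)(s)\,ds$ is Cauchy in $\dot{H}_x^{s_c}$ as $t\to+\infty$, and its limit $u_+$ satisfies \eqref{eq:scatter}. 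The wave operator construction is the time-reversed fixed point: for given $u_+\in\dot{H}_x^{s_c}$ I would choose $T$ so large that $\|e^{it\Delta}u_+\|_{L_{t,x}^{5p/2}([T,\infty)\times\R^3)}$ is small, then solve $u(t) = e^{it\Delta}u_+ + i\smallint_t^\infty e^{i(t-s)\Delta}(|u|^p u)(s)\,ds$ on $[T,\infty)$ by the same contraction argument.

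The one non-routine ingredient is the fractional chain rule applied to $|u|^p u$ at regularity $s_c$ when $p$ is not an even integer. For $s_c\in(0,1)$ the Christ--Weinstein chain rule delivers the required estimate directly, since $p>s_c$ holds throughout the admissible ranges $(0,1/2)\cup(3/4,1)$. For $s_c\in(1,3/2)$ one first writes $\nabla(|u|^p u) = O(|u|^p)\nabla u$ and then applies a fractional chain rule of order $s_c-1<1/2$ to the factor $|u|^p$, which is again admissible because $p>4>s_c-1$. Both variants are well documented in the NLS literature, and the rest of the proof is textbook. The main work at this stage is essentially bookkeeping: verifying that the Strichartz and nonlinear exponents are admissible and simultaneously scale-invariant throughout the three ranges of $s_c$.
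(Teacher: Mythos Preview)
Your sketch is correct and follows the standard direct contraction-mapping approach in a scale-invariant Strichartz space; the exponents you implicitly use (e.g.\ the dual norm $L_{t,x}^{10/7}$ together with $\|u\|_{L_{t,x}^{5p/2}}^p\||\nabla|^{s_c}u\|_{L_{t,x}^{10/3}}$) do close, and your handling of the fractional chain rule in the two regimes $0<s_c<1$ and $1<s_c<3/2$ is fine.

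The paper does not give a detailed proof either, but its suggested route is organized a bit differently. Rather than running the contraction directly for $\dot{H}_x^{s_c}$ data, it first invokes the Cazenave--Weissler argument for data in the \emph{inhomogeneous} space $H_x^{s_c}$ (where the extra $L_x^2$ control simplifies matters), and then removes the $L_x^2$ assumption by approximating general $\dot{H}_x^{s_c}$ data by $H_x^{s_c}$ data and appealing to the stability result (Theorem~\ref{thm:stability}). Your direct approach is cleaner and self-contained, at the cost of having to verify the fractional chain/product rule in each regime of $s_c$, which you do. The paper's indirect route trades that verification for a dependence on the stability theorem, which is needed elsewhere in the paper anyway; both approaches rest on the same underlying Strichartz and nonlinear estimates.
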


This theorem follows from well-known arguments. Specifically, arguments from \cite{CazWei} suffice to establish the result first for data in the inhomogeneous space $H_x^{s_c}(\R^3)$ (see \cite{KV:clay, KV:supercritical}, for example). To remove the assumption that $u_0\in L_x^2(\R^3)$, one can use a stability result such as the following.

\begin{theorem}[Stability]\label{thm:stability} Let $s_c\geq 0$. Let $I$ be a compact time interval and $v:I\times\R^3\to\C$ a solution to $(i\partial_t+\Delta)v=\vert v\vert^p v+e$ for some function $e$. Assume that
		$$\norm{v}_{L_t^\infty\dot{H}_x^{s_c}(I\times\R^3)}\leq E\quad\text{and}\quad S_I(v)\leq L.$$
Let $t_0\in I$ and $u_0\in\dot{H}_x^{s_c}(\R^3)$. Then there exists $\eps_0=\eps_0(E,L)$ such that if
		$$\norm{u_0-v(t_0)}_{\dot{H}_x^{s_c}(\R^3)}+\xnorms{\vert\nabla\vert^{s_c}e}{10/7}{I}\leq\eps$$
for some $0<\eps<\eps_0$, then there exists a solution $u:I\times\R^3\to\C$ to \eqref{nls} with $u(t_0)=u_0$ and $S_I(u-v)\lesssim_{E,L}\eps.$ 
\end{theorem}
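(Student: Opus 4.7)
My plan is to run the standard stability bootstrap: partition the interval $I$ into finitely many subintervals where the scattering norm of the reference solution $v$ is small, solve for the error $w:=u-v$ by Duhamel on each subinterval using Strichartz estimates, and then chain the subintervals together. The equation for $w$ is
\begin{equation*}
(i\partial_t+\Delta)w = |v+w|^p(v+w) - |v|^p v - e,
\end{equation*}
so on a subinterval $I_j=[t_j,t_{j+1}]$ we have the Duhamel representation
\begin{equation*}
w(t) = e^{i(t-t_j)\Delta}w(t_j) - i\smallint_{t_j}^t e^{i(t-s)\Delta}\bigl(|v+w|^p(v+w)-|v|^p v\bigr)(s)\,ds + i\smallint_{t_j}^t e^{i(t-s)\Delta}e(s)\,ds.
\end{equation*}
First, using $S_I(v)\le L$ and continuity of $\smallint|v|^{5p/2}$ in the endpoint, I would fix a small parameter $\eta=\eta(E)>0$ (to be chosen by the Strichartz constants) and subdivide $I$ into $J=J(L,\eta)$ consecutive intervals $I_j$ with $S_{I_j}(v)\le\eta$.

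On each $I_j$ I want to close a bootstrap in the Strichartz-type norm $X(I_j):=\xonorm{|\nabla|^{s_c}w}{q}{r}$ for an admissible pair $(q,r)$ adapted to the nonlinearity; concretely I would use the dual pairing that produces the $L_t^{10/7}L_x^{10/7}$ forcing norm which already appears in the hypothesis on $e$, together with the scattering norm $L_{t,x}^{5p/2}$ controlled by Sobolev embedding from $L_t^{q}\dot{W}^{s_c,r}_x$. Applying Strichartz gives a schematic estimate
\begin{equation*}
\|w\|_{X(I_j)} + S_{I_j}(w)^{1/(5p/2)}\lesssim \norm{w(t_j)}_{\dot H^{s_c}_x} + \bxonorms{|\nabla|^{s_c}\bigl(|v+w|^p(v+w)-|v|^p v\bigr)}{10/7} + \eps.
\end{equation*}
The nonlinear difference is then split by writing $F(v+w)-F(v)=\int_0^1 F'(v+\theta w)w\,d\theta$ (with the appropriate complex-variable interpretation of $F(u)=|u|^p u$), and the derivative is distributed with a fractional chain/product rule. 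Using H\"older one factor will be estimated in a norm controlled by $S_{I_j}(v)^{\text{power}}\lesssim\eta^{\text{power}}$, while the other factor sits in $X(I_j)$. Provided $\eta$ is chosen small relative to $E$ and the Strichartz constants, this yields a contraction/bootstrap estimate of the form $\|w\|_{X(I_j)}\le C(E)\bigl(\norm{w(t_j)}_{\dot H^{s_c}_x}+\eps\bigr)$.

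Iterating over $j=0,1,\ldots,J-1$, the transmitted initial error at each stage grows by a factor $C(E)$, so after $J=J(E,L)$ steps one gets $S_I(u-v)\lesssim_{E,L}\eps$ and existence of $u$ on all of $I$, provided $\eps_0$ is chosen to absorb these accumulated constants. The main obstacle, and the place where I expect to spend most of the work, is the nonlinear estimate: because $p$ can fail to be an integer and $s_c$ can be less than $1$, the map $F(u)=|u|^p u$ has only limited H\"older regularity in the relevant range, so rather than the classical fractional chain rule I would need the Visan-type chain rule for $C^{0,\alpha}$ nonlinearities (handling the derivative $|v+\theta w|^p$ by paraproduct/Littlewood--Paley decomposition). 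This is what forces the case division in the theorem and makes careful tracking of exponents the delicate step; once the right Strichartz-admissible quadruple $(q,r)$ compatible with both $L^{10/7}_{t,x}$ forcing and the fractional nonlinearity is identified, the bootstrap and iteration are routine.
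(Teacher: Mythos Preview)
Your outline is correct and matches the standard stability argument that the paper invokes (without proof) by citing \cite{KV:supercritical, Mur:swamprat}: subdivide $I$ into pieces where $S_{I_j}(v)$ is small, run a Strichartz/Duhamel bootstrap for $w=u-v$ on each piece, and iterate. One minor point: your worry about needing a Visan-type $C^{0,\alpha}$ chain rule is misplaced in this paper's regime, since $s_c>0$ in three dimensions forces $p>4/3$ (and $s_c>1$ forces $p>4$), so $F$ and $F'$ have enough classical regularity that the ordinary Christ--Weinstein fractional chain and product rules suffice---this is exactly what the paper means when it says the stability theory here is ``fairly straightforward,'' in contrast to the delicate $p<1$ case.
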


	This result also follows from well-known arguments, similar to those used to prove local well-posedness. For small-power nonlinearities (that is, $\vert u\vert^p u$ for $p<1$), the stability theory can become quite delicate; see \cite[Section~3.4]{KV:clay} for a detailed discussion with references. In our setting, however, the stability theory is fairly straightforward; one can find the necessary arguments in \cite{KV:supercritical, Mur:swamprat}. 

	We can now outline the proof of Theorem~\ref{thm:main}. We argue by contradiction and suppose that Theorem~\ref{thm:main} were false. As scattering holds for sufficiently small initial data (cf. Theorem~\ref{thm:lwp}), we deduce the existence of a threshold below which solutions scatter, but above which we can find solutions with arbitrarily large scattering size. Using a limiting argument, we can then prove the existence of nonscattering solutions living exactly at the threshold, that is, \emph{minimal counterexamples}. To complete the proof, we need only to rule out the existence of such minimal counterexamples. 
	
	To accomplish this, we make use of an important property of minimal counterexamples, namely \emph{almost periodicity} modulo the symmetries of the equation. Let us now briefly discuss this property and some of its consequences. For a more extensive treatment, one can refer to \cite{KV:clay}. 

\begin{definition}[Almost periodic solutions] Let $s_c>0$. A solution $u:I\times\R^3\to\C$ to \eqref{nls} is \emph{almost periodic (modulo symmetries)} if 
	\begin{equation}\label{a priori}
	u\in L_t^\infty\dot{H}_x^{s_c}(I\times\R^3)
	\end{equation} 
and there exist functions $x:I\to\R^3$, $N:I\to\R^+$, and $C:\R^+\to\R^+$ so that for all $t\in I$ and $\eta>0$, 
	$$\int_{\vert x-x(t)\vert>\frac{C(\eta)}{N(t)}} \big\vert \vert\nabla\vert^{s_c}u(t,x)\big\vert^2\,dx
		+\int_{\vert \xi\vert> C(\eta)N(t)}\vert\xi\vert^{2s_c}\vert\wh{u}(t,x)\vert^2\,d\xi<\eta.$$
We call $N(t)$ the \emph{frequency scale function}, $x(t)$ the \emph{spatial center}, and $C(\eta)$ the \emph{compactness modulus function}.
\end{definition}

\begin{remark}\label{remark:radial} As a consequence of radiality, the solutions we consider can only concentrate near the spatial origin. In particular, we may take $x(t)\equiv 0$. 
\end{remark}

The frequency scale function of an almost periodic solution obeys the following local constancy property (see \cite[Lemma 5.18]{KV:clay}).

\begin{lemma}[Local constancy]\label{lem:constancy}  If $u:I\times\R^3\to\C$ is a maximal-lifespan almost periodic solution to \eqref{nls}, then there exists $\delta=\delta(u)>0$ so that for all $t_0\in I$
		$$[t_0-\delta N(t_0)^{-2},t_0+\delta N(t_0)^{-2}]\subset I.$$
Moreover, $N(t)\sim_u N(t_0)$ for $\vert t-t_0\vert\leq \delta N(t_0)^{-2}.$
\end{lemma}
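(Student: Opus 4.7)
The plan is to argue by contradiction via a rescaling-and-compactness argument. Fix $t_0\in I$; using the scaling symmetry \eqref{eq:scaling}, I would first normalize $N(t_0)=1$ and $x(t_0)=0$ by passing from $u$ to $\tilde u(t,x):=N(t_0)^{-2/p}u\bigl(t_0+N(t_0)^{-2}t,\,N(t_0)^{-1}x+x(t_0)\bigr)$. A change-of-variables computation confirms that $\tilde u$ solves \eqref{nls} and that $\tilde u(0)$ is almost periodic at frequency scale one with the same compactness modulus function $C(\eta)$ as $u$. It therefore suffices to produce a single $\delta>0$ depending only on $\norm{u}_{L_t^\infty\dot H_x^{s_c}}$ and $C$ for which the conclusion holds in the rescaled form at the reference time $0$.

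Suppose no such $\delta$ exists. Then I can extract $t_n\in I$ and form the rescaled solutions $u_n$ centered at $t_n$ so that, for every $n$, either $u_n$ fails to extend to $[-1/n,1/n]$ or the comparability $N_{u_n}(s)\sim 1$ fails at some $s$ with $|s|\leq 1/n$. The initial data $\{u_n(0)\}$ are uniformly bounded in $\dot H_x^{s_c}$ by \eqref{a priori} and inherit uniform frequency and spatial tightness at scale one from almost periodicity. These three properties make $\{u_n(0)\}$ precompact in $\dot H_x^{s_c}(\R^3)$, so along a subsequence $u_n(0)\to\phi$ strongly in $\dot H_x^{s_c}$.

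Applying Theorem~\ref{thm:lwp} to $\phi$ yields a solution $w$ on some interval $[-\tau,\tau]$ with $w(0)=\phi$ and $S_{[-\tau,\tau]}(w)<\infty$. Theorem~\ref{thm:stability}, applied with $v=w$, zero error, and initial-data perturbation $\norm{u_n(0)-\phi}_{\dot H_x^{s_c}}\to 0$, forces $u_n$ to extend to $[-\tau,\tau]$ for all large $n$ with $u_n\to w$ in both $L_{t,x}^{5p/2}([-\tau,\tau]\times\R^3)$ and $C_t\dot H_x^{s_c}([-\tau,\tau]\times\R^3)$. Undoing the rescaling gives $[t_n-\tau N(t_n)^{-2},\,t_n+\tau N(t_n)^{-2}]\subset I$ for large $n$, contradicting the first alternative. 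For the comparability $N(t)\sim_u N(t_0)$, the same convergence together with the uniform compactness modulus function of the $u_n$ passes almost periodicity to $w$ on $[-\tau,\tau]$ and then forces $N_{u_n}(s)\sim 1$ on $[-\tau,\tau]$ for all large $n$, contradicting the second alternative.

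The hardest step, I expect, is this final comparability claim, because the frequency scale function of an almost periodic solution is only determined up to a multiplicative constant, so one cannot compare $N_{u_n}(s)$ to $N_w(s)$ pointwise. I would handle it intrinsically: once $\eta>0$ is chosen small enough, the Fourier-tail and spatial-tail conditions defining almost periodicity are preserved under small $\dot H_x^{s_c}$-perturbations of the datum, and the monotonicity of these tails in the scale parameter pins every admissible frequency scale of $u_n(s)$ into a fixed compact subset of $(0,\infty)$, uniformly in $n$ and $s\in[-\tau,\tau]$. Combined with stability, this yields the required $N(t)\sim_u N(t_0)$ conclusion on the claimed interval.
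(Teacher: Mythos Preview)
The paper does not supply its own proof of this lemma; it simply cites \cite[Lemma~5.18]{KV:clay}. Your rescaling-and-compactness argument---normalize $N(t_0)=1$, extract a convergent subsequence of initial data from the precompact normalized orbit, then invoke Theorem~\ref{thm:lwp} and Theorem~\ref{thm:stability} to reach a contradiction---is exactly the argument given there, and your identification of the $N(t)\sim_u N(t_0)$ comparability as the subtlest step (owing to the non-uniqueness of the frequency scale function) is correct.
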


Thus, for an almost periodic solution $u:I\times\R^3\to\C$, we may divide the lifespan $I$ into \emph{characteristic subintervals} $J_k$ on which we can set $N(t)\equiv N_k$ for some $N_k$, with $\vert J_k\vert \sim_u N_k^{-2} $. This requires us to modify the compactness modulus function by a time-independent multiplicative factor.

Lemma~\ref{lem:constancy} also provides information about the behavior of the frequency scale function at the blowup time. In particular, we have the following (see \cite[Corollary~5.19]{KV:clay}).

\begin{corollary}[$N(t)$ at blowup]\label{cor:constancy} Let $u:I\times\R^3\to\C$ be a maximal-lifespan almost periodic solution to \eqref{nls}. If $T$ is a finite endpoint of $I$, then $N(t)\gtrsim_u \vert T-t\vert^{-1/2}$. 
\end{corollary}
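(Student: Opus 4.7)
The plan is to read off the bound directly from the local constancy lemma (Lemma~\ref{lem:constancy}). The key observation is that, since $u$ is a maximal-lifespan solution blowing up at the finite endpoint $T$, any interval of the form $[t_0-\delta N(t_0)^{-2}, t_0+\delta N(t_0)^{-2}]$ guaranteed to lie inside $I$ cannot extend past $T$, and this immediately constrains $N(t_0)$ from below in terms of $|T-t_0|$.

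More precisely, I would treat the case $T=\sup I<\infty$ (the case $T=\inf I>-\infty$ is symmetric). Fix $t_0\in I$ with $t_0<T$. By Lemma~\ref{lem:constancy}, there exists $\delta=\delta(u)>0$, independent of $t_0$, such that
$$[t_0-\delta N(t_0)^{-2},\, t_0+\delta N(t_0)^{-2}]\subset I.$$
Since $T$ is the maximal forward endpoint and $u$ blows up forward in time at $T$ (by the blowup criterion in Theorem~\ref{thm:lwp}), the solution cannot exist on any interval extending to or past $T$; in particular the right endpoint above must satisfy
$$t_0+\delta N(t_0)^{-2}<T.$$
Rearranging gives $N(t_0)^{2}>\delta/(T-t_0)$, i.e.\ $N(t_0)>\sqrt{\delta}\,|T-t_0|^{-1/2}$, which is the claimed bound with implicit constant depending only on $u$ through $\delta$.

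There is essentially no obstacle here beyond correctly invoking the blowup criterion: one just has to confirm that maximality of $I$ together with finiteness of $T$ rules out extending the solution onto $[t_0-\delta N(t_0)^{-2},t_0+\delta N(t_0)^{-2}]$ when this interval contains $T$. The argument for the left endpoint is identical, replacing $t_0+\delta N(t_0)^{-2}$ by $t_0-\delta N(t_0)^{-2}$ and $T-t_0$ by $t_0-T$, which yields $N(t_0)\gtrsim_u|T-t_0|^{-1/2}$ in that case as well.
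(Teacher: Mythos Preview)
Your argument is correct and is precisely the standard derivation of this corollary from the local constancy lemma. The paper does not give its own proof but simply cites \cite[Corollary~5.19]{KV:clay}, where the same argument appears. One small simplification: you do not actually need to invoke the blowup criterion from Theorem~\ref{thm:lwp}; since $I$ is open (Theorem~\ref{thm:lwp}) and $T=\sup I\notin I$, the containment $[t_0-\delta N(t_0)^{-2},\,t_0+\delta N(t_0)^{-2}]\subset I$ already forces $t_0+\delta N(t_0)^{-2}<T$ directly.
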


The next lemma relates the frequency scale function of an almost periodic solution to its Strichartz norms.

\begin{lemma}[Spacetime bounds]\label{lem:sb} Let $u:I\times\R^3\to\C$ be an almost periodic solution to \eqref{nls}. Then
	$$\smallint_I N(t)^2\,dt\lesssim_u \xnorm{\vert\nabla\vert^{s_c}u}{2}{6}{I}^2\lesssim_u 1+\smallint_I N(t)^2\,dt.$$
\end{lemma}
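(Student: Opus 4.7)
The plan is to establish the two inequalities separately. For the first, I will derive a pointwise-in-time bound $N(t)^2\lesssim_u\||\nabla|^{s_c}u(t)\|_{L_x^6}^2$ using almost periodicity and H\"older, and then integrate in $t$. For the second, I will partition $I$ into characteristic subintervals and apply Strichartz on each. Throughout I may assume $u\not\equiv 0$, else the first bound is vacuous.

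For the first inequality, almost periodicity combined with continuity of $t\mapsto u(t)$ in $\dot H_x^{s_c}$ and uniqueness gives a uniform lower bound $\inf_{t\in I}\||\nabla|^{s_c}u(t)\|_{L_x^2}\gtrsim_u 1$. Picking $\eta>0$ small depending on this implicit constant, almost periodicity yields
\[
\int_{|x|\leq C(\eta)/N(t)}\big||\nabla|^{s_c}u(t,x)\big|^2\,dx\gtrsim_u 1
\]
for every $t\in I$. H\"older's inequality on the ball of radius $R=C(\eta)/N(t)$ in $\R^3$ then gives
\[
\int_{|x|\leq R}\big||\nabla|^{s_c}u(t,x)\big|^2\,dx\lesssim R^2\big\||\nabla|^{s_c}u(t)\big\|_{L_x^6}^2,
\]
so $N(t)^2\lesssim_u\||\nabla|^{s_c}u(t)\|_{L_x^6}^2$, and integration over $I$ completes this half.

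For the upper bound, I will use Lemma~\ref{lem:constancy} to partition $I$ into characteristic subintervals $J_k$ on which $N(t)\sim_u N_k$ and $|J_k|\sim_u N_k^{-2}$. Picking $t_k\in J_k$, Strichartz gives
\[
\big\||\nabla|^{s_c}u\big\|_{L_t^2L_x^6(J_k\times\R^3)}\lesssim \big\||\nabla|^{s_c}u(t_k)\big\|_{L_x^2}+\xnorms{|\nabla|^{s_c}(|u|^pu)}{10/7}{J_k},
\]
the first term being $O(1)$ by the a priori bound \eqref{a priori}. For the nonlinear term, rescaling $J_k$ to unit length (sending $N_k\mapsto 1$) preserves all critical norms and reduces matters to a solution with unit frequency scale on a unit time interval; the local theory of Theorems~\ref{thm:lwp} and~\ref{thm:stability}, combined with a standard bootstrap, then gives $\||\nabla|^{s_c}u\|_{L_t^2L_x^6(J_k\times\R^3)}\lesssim_u 1$. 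Squaring and summing over $k$, and using $|J_k|N_k^2\sim_u 1$,
\[
\big\||\nabla|^{s_c}u\big\|_{L_t^2L_x^6(I\times\R^3)}^2=\sum_k\big\||\nabla|^{s_c}u\big\|_{L_t^2L_x^6(J_k\times\R^3)}^2\lesssim_u\#\{J_k\}\lesssim 1+\int_I N(t)^2\,dt,
\]
the $+1$ absorbing the possibility that $I$ is shorter than a single characteristic interval.

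The main obstacle is the uniform-in-$k$ Strichartz bound on each $J_k$. Without an $\dot H_x^{s_c}$-level conservation law, the a priori bound alone does not control the nonlinearity; it is the frequency localization at scale $N_k$ supplied by almost periodicity, combined with rescaling to unit scale, that reduces each $J_k$ to a perturbative regime where Theorem~\ref{thm:stability} applies.
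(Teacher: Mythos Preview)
Your proof is correct and follows essentially the same route as the paper's sketch (adapting \cite[Lemma~5.21]{KV:clay}): both arguments reduce to showing $\xnorm{\nsc u}{2}{6}{J_k}\sim_u 1$ on each characteristic subinterval and then counting subintervals via $\int_I N(t)^2\,dt$. Your pointwise-in-time H\"older argument for the lower bound is precisely the mechanism underlying the paper's $\gtrsim_u 1$ claim on each $J_k$, and your upper bound via rescaling, almost periodicity, and the local theory/stability is the standard justification the paper invokes.
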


To prove this lemma, one can adapt the proof of \cite[Lemma~5.21]{KV:clay}. The key is to notice that $\int_I N(t)^2\,dt$ counts the number of characteristic subintervals $J_k$ inside $I$, and that for each such subinterval we have
	$$\xnorm{\vert\nabla\vert^{s_c}u}{2}{6}{J_k}\sim_u 1.$$

We record here some final consequences of almost periodicity (see the proofs of (7.2) and (7.4) in \cite{MiaMurZhe} for very similar arguments).

\begin{lemma}\label{lower bounds} Let $u:I\times\R^3\to\C$ be a nonzero almost periodic solution to \eqref{nls} such that $x(t)\equiv 0$. Then there exists $C(u)>0$ so that
	\begin{equation}\label{lb1}
	\inf_{t\in I}\big(N(t)^{2s_c}\smallint_{\vert x\vert\leq\frac{C(u)}{N(t)}}\vert u(t,x)\vert^2\,dx\big)\gtrsim_u 1.
	\end{equation}
If $\inf_{t\in I}N(t)\geq 1$, then there exists $C(u)>0$ and $N_0>0$ so that for $N<N_0$,
		\begin{equation}\label{lb3}
	\inf_{t\in I}\big(N(t)^{2s_c}\smallint_{\vert x\vert\leq\frac{C(u)}{N(t)}}\vert u_{>N}(t,x)\vert^2\,dx\big)\gtrsim_u1.
	\end{equation}	
If $\sup_{t\in I}N(t)\leq 1$, then there exists $C(u)>0$ and $N_1>0$ so that for $N>N_1$,	
		\begin{equation}\label{lb2}
	\inf_{t\in I}\big(N(t)^{2s_c}\smallint_{\vert x\vert\leq\frac{C(u)}{N(t)}}\vert u_{\leq N}(t,x)\vert^2\,dx\big)\gtrsim_u 1.
	\end{equation}
\end{lemma}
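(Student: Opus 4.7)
My plan is to establish (lb1) first by a rescaling-plus-compactness argument, and then derive (lb3) and (lb2) from (lb1) by a frequency-localization argument combined with almost periodicity.

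For (lb1), I rescale to unit frequency scale by setting $v_t(y) := N(t)^{-(3/2-s_c)} u(t, y/N(t))$, so that the claim becomes $\|v_t\|_{L^2(|y| \leq C)}^2 \gtrsim_u 1$ uniformly in $t \in I$, for some $C = C(u)$. By almost periodicity together with $x(t) \equiv 0$, the family $\{v_t : t \in I\}$ is uniformly bounded in $\dot{H}_x^{s_c}$ and uniformly concentrated in both physical and Fourier space; standard equicontinuity arguments then give precompactness in $\dot{H}_x^{s_c}$, and since $u$ is nonzero, the small-data scattering part of Theorem~\ref{thm:lwp} rules out sequences $v_{t_n} \to 0$, yielding $\|v_t\|_{\dot{H}_x^{s_c}} \gtrsim_u 1$ uniformly. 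Now fix $\eta \in (0,1)$ small and let $K = K(\eta)$ be the associated constant from almost periodicity; Bernstein then gives
\[ \|P_{\leq K} v_t\|_{L^2}^2 \gtrsim K^{-2s_c}\bigl(\|v_t\|_{\dot{H}_x^{s_c}}^2 - \eta\bigr) \gtrsim_u K^{-2s_c}. \]
Writing $P_{\leq K} v_t = (|\nabla|^{-s_c} P_{\leq K})(|\nabla|^{s_c} v_t)$ as a convolution with a kernel of polynomial decay of order $|y|^{s_c-3}$, and using both the physical- and frequency-side concentration of $|\nabla|^{s_c} v_t$ provided by almost periodicity, one shows $\|P_{\leq K} v_t\|_{L^2(|y|>R)}$ is arbitrarily small for $R$ chosen large enough; thus $\|P_{\leq K} v_t\|_{L^2(|y| \leq R)}^2 \gtrsim_u 1$. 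Finally, Sobolev embedding $\dot{H}_x^{s_c} \hookrightarrow L_x^{p_c}$ with $p_c = 6/(3-2s_c)$ together with H\"older on $B_R$ gives $\|P_{>K} v_t\|_{L^2(|y| \leq R)} \lesssim R^{s_c}\sqrt{\eta}$, and the reverse triangle inequality then yields $\|v_t\|_{L^2(|y| \leq R)}^2 \gtrsim_u 1$ once $\eta$ is taken sufficiently small; unrescaling proves (lb1) with $C(u) = R$.

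For (lb3), assume $\inf_t N(t) \geq 1$. Precompactness of $\{v_t\}$ in $\dot{H}_x^{s_c}$ implies a uniform low-frequency vanishing property: $\|P_{\leq M} v_t\|_{\dot{H}_x^{s_c}} < \eta$ uniformly in $t$ once $M \leq M_0(\eta)$. Since $\|P_{\leq N} u(t)\|_{\dot{H}_x^{s_c}} = \|P_{\leq N/N(t)} v_t\|_{\dot{H}_x^{s_c}}$ by rescaling, and $N/N(t) \leq N$, one obtains $\|P_{\leq N} u(t)\|_{\dot{H}_x^{s_c}} < \eta$ uniformly for $N < N_0 := M_0(\eta)$. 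Sobolev and H\"older on $B_{C/N(t)} \subset B_C$ then give
\[ N(t)^{s_c} \|P_{\leq N} u(t)\|_{L^2(|x| \leq C/N(t))} \lesssim C^{s_c}\eta, \]
while (lb1) supplies $N(t)^{s_c}\|u(t)\|_{L^2(|x| \leq C/N(t))} \gtrsim_u 1$. Writing $u_{>N} = u - u_{\leq N}$ and applying the reverse triangle inequality (with $\eta$ small) then proves (lb3). The argument for (lb2) is symmetric: when $\sup_t N(t) \leq 1$, frequency-side almost periodicity yields $\|P_{>N} u(t)\|_{\dot{H}_x^{s_c}} < \eta$ uniformly for $N \geq N_1(\eta)$, and the same Sobolev/H\"older step controls $\|u_{>N}\|_{L^2}$ on $B_{C/N(t)}$.

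The principal obstacle is the spatial-localization step in (lb1): although frequency concentration of $P_{\leq K} v_t$ is immediate from Bernstein, transferring the physical concentration of $|\nabla|^{s_c} v_t$ to $L^2$-concentration of $P_{\leq K} v_t$ itself requires a careful quantitative estimate on the tail of the Riesz-type kernel of $|\nabla|^{-s_c} P_{\leq K}$, whose decay $|y|^{s_c - 3}$ is integrable in $L^2$ at infinity precisely because $s_c < 3/2$. Everything else reduces to routine applications of Bernstein, Sobolev, and H\"older.
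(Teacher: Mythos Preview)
The paper does not give its own proof of this lemma; it merely refers the reader to \cite{MiaMurZhe} for similar arguments. So let me assess your argument on its own merits.

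Your derivations of \eqref{lb3} and \eqref{lb2} from \eqref{lb1} are correct. Precompactness of the rescaled orbit in $\dot H^{s_c}_x$ does yield uniform low-frequency vanishing (via an $\eta$-net argument on the compact closure), and combined with $\inf_t N(t)\geq 1$ this gives $\|u_{\leq N}(t)\|_{\dot H^{s_c}_x}<\eta$ uniformly for $N$ small; your Sobolev/H\"older step on $B_{C/N(t)}$ and the triangle inequality then close the argument. The high-frequency case is symmetric.

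The gap is in your proof of \eqref{lb1}, precisely at the step you flag as the ``principal obstacle.'' You work with $\|P_{\leq K}v_t\|_{L^2}$, but this quantity need not be finite: almost periodicity only places $v_t$ in $\dot H^{s_c}_x$, and there is no control on $\widehat{v_t}$ near $\xi=0$ beyond $|\xi|^{s_c}\widehat{v_t}\in L^2$. In particular your Bernstein lower bound is vacuous (it could read $\infty\gtrsim K^{-2s_c}$), and the subsequent tail estimate $\|P_{\leq K}v_t\|_{L^2(|y|>R)}\ll 1$ cannot be established: writing $P_{\leq K}v_t=G\ast w$ with $w=|\nabla|^{s_c}v_t$, the contribution of $w\cdot 1_{\{|z|>C(\eta)\}}$ is controlled in $L^{p_c}$ by Sobolev (since $\|w\|_{L^2(|z|>C(\eta))}<\eta^{1/2}$), but \emph{not} in $L^2$ on the unbounded region $\{|y|>R\}$. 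The kernel decay $|y|^{s_c-3}$ places $G$ in $L^2$, which only yields an $L^\infty$ bound on $G\ast w_{\text{out}}$, not $L^2$. Replacing $P_{\leq K}$ by a band-pass $P_{c<\cdot\leq K}$ cures the finiteness issue, but then the error $\|(P_{\leq c}+P_{>K})v_t\|_{L^2(B_R)}\lesssim R^{s_c}\eta^{1/2}$ must be made small relative to a main term of size $\sim K^{-s_c}$, and since $R,K,c$ all depend on $\eta$ through the compactness modulus, the parameters become circular.

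A clean fix is to avoid the quantitative kernel computation altogether: the map $v\mapsto\|v\|_{L^2(B_C)}$ is continuous on $\dot H^{s_c}_x$ (by Sobolev embedding and H\"older on $B_C$). If for every $C$ one had $\inf_t\|v_t\|_{L^2(B_C)}=0$, extract $v^{(C)}\in\overline{\{v_t\}}$ with $v^{(C)}\vert_{B_C}=0$; passing to a subsequential limit $v_*$ along $C\to\infty$ gives $v_*=0$ on every ball, hence $v_*=0$, contradicting $0\notin\overline{\{v_t\}}$ (which you have already established via small-data scattering). This gives the desired $C(u)$ without any circularity.
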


We can now state the first major step in the proof of Theorem~\ref{thm:main}.

\begin{theorem}[Reduction to almost periodic solutions]\label{thm:red1} If Theorem~\ref{thm:main} fails, then there exists a radial maximal-lifespan solution $u:I\times\R^3\to\C$ to \eqref{nls} that is almost periodic and blows up forward and backward in time.
\end{theorem}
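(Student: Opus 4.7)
The plan is to argue by contradiction following the concentration--compactness paradigm of Keraani and Kenig--Merle. Suppose Theorem~\ref{thm:main} fails, and define
$$ L(E) := \sup\bigl\{ S_I(u) : u \text{ a radial maximal-lifespan solution to \eqref{nls} with } \norm{u}_{L_t^\infty \dot{H}_x^{s_c}(I\times\R^3)}\leq E\bigr\}. $$
Theorem~\ref{thm:lwp} (small-data scattering) ensures $L(E)<\infty$ for $E$ small, while the assumed failure of Theorem~\ref{thm:main} forces $L(E)=\infty$ for some $E$; hence there is a critical threshold $E_c\in(0,\infty)$ with $L(E)<\infty$ for $E<E_c$ and $L(E)=\infty$ for $E>E_c$. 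I would then extract a sequence of radial maximal-lifespan solutions $u_n:I_n\times\R^3\to\C$ with $\norm{u_n}_{L_t^\infty\dot{H}_x^{s_c}}\to E_c$ and $S_{I_n}(u_n)\to\infty$.

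The next step is a linear profile decomposition in $\dot{H}_x^{s_c}(\R^3)$, adapted to radial functions, applied to the sequence $u_n(t_n)$, where $t_n\in I_n$ is chosen via Definition~\ref{def:blowup} so that the scattering norm of $u_n$ diverges on both sides of $t_n$. This decomposes $u_n(t_n)$ into radial bubbles concentrating at scales $\lambda_n^j$ and time offsets $t_n^j$, plus a remainder with vanishing linear Strichartz size. Each bubble generates a nonlinear profile via local existence or the wave operators provided by Theorem~\ref{thm:lwp}; applying the stability result Theorem~\ref{thm:stability} to a truncated superposition of nonlinear profiles, together with an asymptotic decoupling of the profiles, forces exactly one profile to survive and its $\dot{H}_x^{s_c}$ size to equal $E_c$. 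This yields a radial nonscattering solution $u_\infty:I_\infty\times\R^3\to\C$ saturating the threshold, whose orbit $\{u_\infty(t)\}_{t\in I_\infty}$ is precompact in $\dot{H}_x^{s_c}$ modulo the scaling symmetry of \eqref{nls}; by Remark~\ref{remark:radial} we may take $x(t)\equiv 0$, so $u_\infty$ is almost periodic.

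To secure blowup on both sides of $I_\infty$, I would argue that if $u_\infty$ failed to blow up forward in time, Theorem~\ref{thm:lwp} would yield $\sup I_\infty=+\infty$ and forward scattering of $u_\infty$; combined with almost periodicity, this would force $\norm{u_\infty(t)}_{\dot{H}_x^{s_c}}\to 0$ as $t\to+\infty$, at which point small-data theory would produce a global finite scattering norm for $u_\infty$, contradicting how it was constructed. The backward case is symmetric. The principal obstacle is the linear profile decomposition itself in the two challenging ranges $s_c\in(0,\tfrac12)$ and $s_c\in(1,\tfrac32)$: it rests on an inverse Strichartz inequality at regularity $\dot{H}_x^{s_c}$, which in the energy-supercritical range demands a careful choice of dispersive norm and heavy use of radiality, while in the low-regularity range one must leverage radial Sobolev embeddings to recover the compactness lost at low frequencies. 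Once these ingredients are in place, the rest of the argument follows the standard template laid out in \cite{KV:clay}.
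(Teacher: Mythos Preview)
Your outline follows the same concentration--compactness template the paper invokes, and the structure (critical threshold, profile decomposition, stability, single-profile extraction, two-sided blowup) is correct. However, you have misidentified the principal technical obstacle. The linear profile decomposition at regularity $\dot{H}_x^{s_c}$ is already available from Shao \cite{Sha} in full generality, with no radial hypothesis and no special difficulty in either the range $s_c\in(0,\tfrac12)$ or $s_c\in(1,\tfrac32)$; radiality plays no essential role in the reduction to almost periodic solutions beyond forcing $x(t)\equiv 0$ as in Remark~\ref{remark:radial}. It enters the paper only later, through the Lin--Strauss Morawetz arguments.

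The genuine difficulty the paper singles out is the \emph{decoupling of nonlinear profiles} in the stability step. Keraani's original argument for this decoupling relies on pointwise inequalities for $F(u)$ that are not directly compatible with the fractional differentiation operator $|\nabla|^{s_c}$ when $s_c\notin\{0,1\}$. The paper resolves this by importing the strategy of Killip--Visan \cite{KV:supercritical} for the energy-supercritical range $s_c>1$ and that of \cite{Mur:swamprat} for the inter-critical range $0<s_c<1$. Your proposal does not mention this issue at all, so as written it would stall precisely where the paper indicates the real work lies.
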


The reduction to almost periodic solutions is now fairly standard in the field of nonlinear dispersive PDE, especially in the setting of NLS. Keraani \cite{Ker} originally established the existence of minimal blowup solutions to NLS, while Kenig--Merle \cite{KM:focusing} were the first to use them as a tool to prove global well-posedness. This technique has since been used in a variety of settings and has proven to be extremely effective. One can refer to \cite{HR, KM:intercritical, KTV, KV:focusing, KV:supercritical, KV:revisit, KVZ, MiaMurZhe, Mur:swamprat, Mur:half, TVZ, Vis:revisit} for some examples in the case of NLS; see also \cite{KV:clay} for a good introduction to the method.

The proof of Theorem~\ref{thm:red1} requires three main ingredients. The first is a linear profile decomposition for the free propagator $e^{it\Delta}$; the particular result we need can be found in \cite{Sha}. The second ingredient is a stability result, such as Theorem~\ref{thm:stability} above. Finally, one needs to prove a decoupling statement for nonlinear profiles. Arguments of Keraani \cite{Ker:Strichartz} suffice to establish such a decoupling in the mass- and energy-critical settings; however, these arguments rely on pointwise estimates and hence fail to be directly applicable in the presence of fractional derivatives (for example, when $s_c\notin\{0,1\}$). 

Some strategies are now available for dealing with this technical issue. In \cite{KV:supercritical}, Killip--Visan devised a strategy that works in the energy-supercritical regime, while some alternate approaches are available in the inter-critical setting \cite{HR, KM:intercritical, Mur:swamprat, Mur:half}. As for the cases we consider, the arguments of \cite{KV:supercritical} apply in the cases for which $s_c>1$, while the arguments of \cite{Mur:swamprat} apply in the cases for which $0<s_c<1$.

Next, we make some refinements to the solutions given in Theorem~\ref{thm:red1}. 

If $s_c>1/2$, we proceed as follows. First, a rescaling argument as in \cite{KTV, KV:focusing, TVZ:sloth} allows us to restrict attention to almost periodic solutions as in Theorem~\ref{thm:red1} that do not escape to arbitrarily low frequencies on at least half of their lifespan, say $[0,T_{max})$. Next, as described above, we divide $[0,T_{max})$ into characteristic subintervals $J_k$ so that $N(t)\equiv N_k$ on $J_k$, with $\vert J_k\vert\sim_u N_k^{-2}$. Finally, we recall from Remark~\ref{remark:radial} that we may take $x(t)\equiv 0$. Altogether, we get the following.

\begin{theorem}\label{thm:reduction1} Fix $s_c\in(\tfrac34,1)\cup(1,\tfrac32)$ and suppose Theorem~\ref{thm:main} fails. Then there exists a radial almost periodic solution $u:[0,T_{max})\times\R^3\to\C$ such that $S_{[0,T_{max})}=\infty$ and
	\begin{equation}\nonumber
	N(t)\equiv N_k\geq 1\quad \text{for }t\in J_k,\ 
	\text{with}\ [0,T_{max})=\cup J_k\ \text{and}\ \vert J_k\vert\sim_u N_k^{-2}. 
	\end{equation}
\end{theorem}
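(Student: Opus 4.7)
My plan is to combine three ingredients inherited from the outline: a rescaling procedure that eliminates low-frequency escape on at least a forward half-lifespan, the local constancy property of Lemma~\ref{lem:constancy}, and the radial symmetry reduction of Remark~\ref{remark:radial}.

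First, starting from the almost periodic solution $u:I\times\R^3\to\C$ provided by Theorem~\ref{thm:red1}, I would adopt the rescaling dichotomy of \cite{KTV,KV:focusing,TVZ:sloth}. Fix $T_0\in I$ and split $I=(\inf I,T_0]\cup[T_0,\sup I)$. If there is a half on which $\inf N(t)>0$, then the scaling symmetry \eqref{eq:scaling} lets me rescale $u$ so that $\inf N(t)\geq 1$ on that half; by time reversal I may assume this is the forward half $[0,T_{max})$. If instead $N(t)$ dips to zero along sequences approaching \emph{both} endpoints, I would rescale around such a sequence $t_n$ with dilation parameter $\lambda_n=1/N(t_n)$, producing solutions with $N_n(0)=1$. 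The uniform compactness inherent in almost periodicity, together with the linear profile decomposition of \cite{Sha}, the stability theorem (Theorem~\ref{thm:stability}), and the nonlinear decoupling arguments from \cite{KV:supercritical,Mur:swamprat}, extracts a subsequential limit $\wt{u}$ which is itself a minimal almost periodic counterexample whose frequency scale function is bounded away from zero on a forward half-lifespan. Replacing $u$ by $\wt{u}$ if necessary and rescaling once more reduces matters to the case $\inf_{t\in[0,T_{max})}N(t)\geq 1$.

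Second, with $\inf_{[0,T_{max})}N(t)\geq 1$ in hand, I apply Lemma~\ref{lem:constancy} to cover $[0,T_{max})$ by essentially disjoint characteristic intervals $J_k$ on each of which $N(t)\sim_u N_k$ for some $N_k\geq 1$ and $|J_k|\sim_u N_k^{-2}$. As indicated in the paragraph following Lemma~\ref{lem:constancy}, absorbing the implicit constant into a time-independent rescaling of the compactness modulus function upgrades this to the exact equality $N(t)\equiv N_k$ on $J_k$. Radiality together with Remark~\ref{remark:radial} allows me to take $x(t)\equiv 0$, and the forward blowup inherited from Theorem~\ref{thm:red1} gives $S_{[0,T_{max})}(u)=\infty$ via Definition~\ref{def:blowup}.

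The main obstacle will be the first step: extracting a new minimal almost periodic solution in the case when $N(t)$ escapes to low frequencies on both halves of the lifespan. This extraction requires a nonlinear profile decomposition compatible with the fractional regularity $s_c$, which is delicate in the absence of pointwise estimates; the arguments of \cite{KV:supercritical} (for $s_c>1$) and \cite{Mur:swamprat} (for $s_c<1$) were developed precisely to handle this issue, and I would invoke them here. Once the limit is in hand and known to remain a minimal almost periodic blowup solution, the remaining steps are essentially bookkeeping.
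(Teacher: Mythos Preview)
Your proposal is correct and follows essentially the same approach as the paper: the paper's proof is just the sketch in the paragraph preceding the theorem statement, which invokes the rescaling argument of \cite{KTV,KV:focusing,TVZ:sloth}, the local constancy Lemma~\ref{lem:constancy}, and Remark~\ref{remark:radial}, exactly as you do. One minor remark: in the extraction step you invoke the full linear profile decomposition and nonlinear decoupling machinery, but since $u$ is already almost periodic the orbit $\{u(t)\}$ is precompact modulo scaling, so the subsequential limit of the rescaled solutions can be extracted directly from this compactness together with stability, without redoing the profile analysis needed for Theorem~\ref{thm:red1}.
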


If $s_c<1/2$, we proceed similarly; however, we instead restrict attention to almost periodic solutions as in Theorem~\ref{thm:red1} that do not escape to arbitrarily \emph{high} frequencies on $[0,T_{max})$. In light of Corollary~\ref{cor:constancy}, this implies $T_{max}=\infty$. We arrive at the following.

\begin{theorem}\label{thm:reduction2} Fix $s_c\in(0,\tfrac12)$ and suppose Theorem~\ref{thm:main} fails. Then there exists a radial almost periodic solution $u:[0,\infty)\times\R^3\to\C$ such that $S_{[0,\infty)}(u)=\infty$ and
	\begin{equation}
	N(t)\equiv N_k\leq 1\quad\text{for }t\in J_k,\ \nonumber	
	\text{with }[0,\infty)=\cup J_k\ \text{and}\ \vert J_k\vert\sim_u N_k^{-2}.
	\end{equation}
\end{theorem}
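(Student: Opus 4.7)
The plan is to mirror the strategy already sketched for the $s_c>1/2$ case, with the roles of ``low'' and ``high'' frequencies interchanged. Starting from the solution $u:I\times\R^3\to\C$ provided by Theorem~\ref{thm:red1}, we restrict attention to the forward-in-time portion $[0,T_{max})$, and by Remark~\ref{remark:radial} we take $x(t)\equiv 0$. The goal is to arrange $\sup_{t\in[0,T_{max})} N(t)\leq 1$, which (combined with Corollary~\ref{cor:constancy} and Lemma~\ref{lem:constancy}) yields the remaining assertions essentially for free.

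The heart of the proof is a rescaling argument as in \cite{KTV, KV:focusing, TVZ:sloth}. We split into two cases. If $M:=\sup_{t\in[0,T_{max})}N(t)<\infty$, we simply apply the scaling symmetry \eqref{eq:scaling} with parameter $\lambda = M$: the solution $\wt u(t,x):=M^{-2/p}u(M^{-2}t,M^{-1}x)$ is again a radial almost periodic solution with frequency scale function $\wt N(t) = M^{-1}N(M^{-2}t)\leq 1$, and $S$ is scale-invariant so $\wt u$ remains a blowup solution. If instead $\sup_{t\in[0,T_{max})}N(t)=\infty$, we choose a sequence $t_n\to T_{max}$ with $N(t_n)\uparrow\infty$ and rescale around each $t_n$ to form
$$u_n(t,x):=N(t_n)^{-2/p}u\bigl(t_n+N(t_n)^{-2}t,\,N(t_n)^{-1}x\bigr),$$
each of which is an almost periodic solution with $N_n(0)=1$. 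Almost periodicity provides the compactness needed to extract a subsequential limit $u_\infty$ in $C_t\dot H_x^{s_c}$ on compact time intervals, and the stability theorem (Theorem~\ref{thm:stability}) guarantees that $u_\infty$ is itself an almost periodic solution to \eqref{nls}; by construction the frequency scale function $N_\infty$ satisfies $N_\infty(t)\leq 1$ on its forward maximal lifespan, and $u_\infty$ remains a nonscattering solution (hence one could have replaced $u$ with $u_\infty$ from the start).

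Once $\sup_t N(t)\leq 1$ is secured, Corollary~\ref{cor:constancy} forces $T_{max}=\infty$: if $T_{max}$ were finite, we would have $N(t)\gtrsim_u |T_{max}-t|^{-1/2}\to\infty$, contradicting the bound. Finally, Lemma~\ref{lem:constancy} partitions $[0,\infty)$ into characteristic subintervals $J_k$ on each of which $N(t)$ is comparable to a constant $N_k\leq 1$, with $|J_k|\sim_u N_k^{-2}$; by modifying the compactness modulus function by a time-independent multiplicative factor we may then set $N(t)\equiv N_k$ on $J_k$, as claimed.

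The main obstacle is the limiting argument in the high-frequency escape case: one must verify that the candidate limit $u_\infty$ is in fact a genuine almost periodic solution to \eqref{nls} that fails to scatter, and that its frequency scale function inherits the non-escape property from the prelimit. This requires some care with the interplay between compactness in $\dot H_x^{s_c}$, the stability theory at a regularity where the nonlinearity is not obviously Lipschitz, and the propagation of the compactness modulus function through the limit.
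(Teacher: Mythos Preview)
Your proposal follows the same route as the paper, which itself only sketches the argument by citing the rescaling machinery of \cite{KTV, KV:focusing, TVZ:sloth} and then invoking Corollary~\ref{cor:constancy} and Lemma~\ref{lem:constancy}. Your Case~1 and the endgame (forcing $T_{max}=\infty$, then partitioning into characteristic subintervals) are exactly right.

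The one step that is not justified as written is the claim in Case~2 that ``by construction'' $N_\infty(t)\leq 1$ on the \emph{forward} lifespan of the limit. Merely taking $t_n\to T_{max}$ with $N(t_n)\uparrow\infty$ gives $N_n(0)=1$, but says nothing about $N_n(t)$ for $t>0$: the original $N$ could oscillate wildly past $t_n$. The standard fix (this is what the cited references do) is to choose $t_n$ so that $N(t_n)\geq \sup_{0\leq s\leq t_n}N(s)$, i.e.\ at running maxima of $N$ on $[0,t_n]$. Then the rescaled frequency scales satisfy $N_n(t)\leq 1$ for all $t\in[-N(t_n)^2 t_n,0]$, and since $N(t_n)^2 t_n\to\infty$ the limit $u_\infty$ has $N_\infty(t)\leq 1$ on its entire \emph{backward} lifespan $(-\infty,0]$. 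One then applies the time-reversal symmetry $u(t,x)\mapsto\bar u(-t,x)$ to convert this into a forward-in-time solution with the desired property. You correctly flag the limiting step as the main obstacle, but the specific mechanism by which the non-escape property is inherited needs this refinement.
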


To prove Theorem~\ref{thm:main}, it therefore suffices to rule out the existence of almost periodic solutions as in Theorem~\ref{thm:reduction1} and Theorem~\ref{thm:reduction2}. As alluded to above, the best tools available are monotonicity formulae that hold for defocusing NLS, which are known as \emph{Morawetz estimates}. In this paper, we will use versions of the \emph{Lin--Strauss Morawetz inequality} of \cite{LinStr}, which is given by
	\begin{equation}\label{eq:ls}
		\iint_{I\times\R^d}\frac{\vert u(t,x)\vert^{p+2}}{\vert x\vert}\,dx\,dt\lesssim
		\norm{\vert\nabla\vert^{1/2}u}_{L_t^\infty L_x^2(I\times\R^d)}^2.
		\end{equation}

Because of the weight $\tfrac{1}{\vert x\vert}$, the Lin--Strauss Morawetz inequality is well suited for preventing concentration near the origin, and hence it is most effective in the radial setting. In fact, it is the use of Lin--Strauss Morawetz inequality that leads to the restriction to the radial setting in Theorem~\ref{thm:main}. We cannot use this estimate directly, however, as the solutions we consider need only belong to $L_t^\infty\dot{H}_x^{s_c}$ (and hence the right-hand side of \eqref{eq:ls} need not be finite). 

For $s_c>1/2$, one needs to suppress the low frequencies of solutions in order to access the estimate \eqref{eq:ls}. In his work on the 3D radial energy-critical NLS \cite{Bou}, Bourgain accomplished this by proving a space-localized version of \eqref{eq:ls} (see also \cite{Gri, Tao}). In Section~\ref{section:case one}, we adopt the same approach to treat the range $1<s_c<3/2$. As one of the error terms resulting from space-localization requires control of the solution at the level of $\dot{H}_x^1$, a different approach is needed to handle the range $3/4<s_c<1$. In particular, in Section~\ref{section:case two} we prove a version of \eqref{eq:ls} localized to high frequencies. For $s_c<1/2$, one instead needs to suppress the high frequencies of solutions in order to access \eqref{eq:ls}. In Section~\ref{section:case three} we prove a version of \eqref{eq:ls} localized to low frequencies. 

The remainder of the paper is organized as follows:

In Section~\ref{section:lemmas} we record some notation and background results.
 
 In Section~\ref{section:case one} we rule out almost periodic solutions as in Theorem~\ref{thm:reduction1} with $1<s_c<3/2$. We consider separately the cases $T_{max}<\infty$ and $T_{max}=\infty$. In Section~\ref{section:ftbu} we deal with the case $T_{max}<\infty$. We show that the existence of such solutions is inconsistent with the conservation of energy. We rely on the following `reduced' Duhamel formula for almost periodic solutions to \eqref{nls} (see \cite[Proposition 5.2]{KV:clay}).  

\begin{proposition}[Reduced Duhamel formula]\label{reduced duhamel} Let $u:I\times\R^3\to\C$ be a maximal-lifespan almost periodic solution to \eqref{nls}. Then for all $t\in I$, we have
	$$u(t)=\lim_{T\nearrow\sup I} i\smallint_t^T e^{i(t-s)\Delta}(\vert u\vert^p u)(s)\,ds$$
as a weak limit in $\dot{H}_x^{s_c}(\R^3).$  
\end{proposition}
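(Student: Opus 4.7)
The plan is to work from the standard Duhamel formula on $[t,T]$ for $T\in I$ near $\sup I$:
\[
u(t) = e^{i(t-T)\Delta} u(T) + i\smallint_t^T e^{i(t-s)\Delta}(\vert u\vert^p u)(s)\,ds.
\]
The proposition then reduces to showing $e^{i(t-T)\Delta}u(T)\rightharpoonup 0$ in $\dot{H}_x^{s_c}(\R^3)$ as $T\nearrow \sup I$. Since almost periodicity provides a uniform $\dot{H}_x^{s_c}$ bound on $u(T)$ and the free propagator is a $\dot{H}_x^{s_c}$-isometry, the family $\{e^{i(t-T)\Delta}u(T)\}_T$ is bounded in $\dot{H}_x^{s_c}$, so it suffices to verify the weak convergence against a dense subclass of test functions; take $g\in\mathcal{S}(\R^3)$.

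To bound the resulting pairing, I would first exploit almost periodicity at the frequency side: given $\eta>0$, split $u(T)=v_T+w_T$ where $v_T$ is the Littlewood--Paley projection of $u(T)$ to the annulus $N(T)/C(\eta)\le \vert\xi\vert\le C(\eta)N(T)$ and $w_T$ is the complementary piece. Almost periodicity forces $\|w_T\|_{\dot{H}_x^{s_c}}\le \eta$ uniformly in $T$, so Cauchy--Schwarz yields $|\langle e^{i(t-T)\Delta}w_T,g\rangle_{\dot{H}_x^{s_c}}|\le \eta\,\|g\|_{\dot{H}_x^{s_c}}$, which is acceptable upon sending $\eta\to 0$. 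The problem therefore reduces to $\langle e^{i(t-T)\Delta}v_T,g\rangle_{\dot{H}_x^{s_c}}\to 0$ with $v_T$ frequency-localized near $N(T)$.

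Now I would branch on $\sup I$. If $T_{\max}:=\sup I<\infty$, Corollary~\ref{cor:constancy} forces $N(T)\to\infty$, so the Fourier support of $v_T$ escapes to infinity; writing the pairing as a Fourier integral and invoking dominated convergence with the uniform $L^2$ bound on $\vert\xi\vert^{s_c}\wh{v}_T$ and the rapid decay of $\vert\xi\vert^{s_c}\wh{g}$ delivers the limit. If $\sup I=\infty$, decay must come from dispersion of $e^{i(T-t)\Delta}$ acting on $g$: rewrite the pairing as $\langle v_T,e^{i(T-t)\Delta}\vert\nabla\vert^{2s_c}g\rangle_{L_x^2}$ and estimate it by H\"older using the dispersive bound
\[
\|e^{i(T-t)\Delta}\vert\nabla\vert^{2s_c}g\|_{L_x^{6/(3-2s_c)}}\lesssim \vert T-t\vert^{-2s_c}\|\vert\nabla\vert^{2s_c}g\|_{L_x^{6/(3+2s_c)}}
\]
together with an $L_x^{6/(3+2s_c)}$ bound on $v_T$ obtained from the Sobolev embedding $\dot{H}_x^{s_c}\hookrightarrow L_x^{6/(3-2s_c)}$ and the spatial concentration of $u(T)$ around $x(T)$ at scale $1/N(T)$ (itself a consequence of almost periodicity combined with Bernstein's inequality applied to the frequency-localized piece).

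The main obstacle will be the infinite-lifespan case when $N(T)$ stays bounded above and below along a sequence $T_n\to\infty$: there $v_{T_n}$ lives in a fixed frequency shell so no Fourier-support escape is available, and the argument must combine the dispersion of $g$ with the full spatial compactness of $u$ to extract the necessary $L_x^{6/(3+2s_c)}$ control on $v_T$ with exponent strictly below $2$. Modulo this technical point, the reasoning follows the standard reduced-Duhamel blueprint in the almost periodic framework (cf.\ \cite{KV:clay}).
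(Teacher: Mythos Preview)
The paper does not supply its own proof of this proposition; it merely records the statement and defers to \cite[Proposition~5.2]{KV:clay}. Your sketch follows precisely the standard route taken there: reduce via Duhamel to showing $e^{i(t-T)\Delta}u(T)\rightharpoonup 0$ in $\dot H_x^{s_c}$, exploit uniform boundedness to test against a dense class, strip off a frequency tail using almost periodicity, and then split into the cases $\sup I<\infty$ (frequency escape via $N(T)\to\infty$) and $\sup I=\infty$ (dispersion).

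Two remarks on the details. First, a minor slip: the dispersive decay in $\R^3$ at the exponent pair $\bigl(6/(3+2s_c),\,6/(3-2s_c)\bigr)$ is $|T-t|^{-s_c}$, not $|T-t|^{-2s_c}$; this does not affect the argument. Second, the obstacle you flag in your final paragraph is real but can be bypassed more cleanly than by chasing an $L_x^{6/(3+2s_c)}$ bound on the frequency-localized piece $v_T$ (which, as you note, requires combining the frequency cutoff with spatial localization, and these do not commute). When $N(T_n)$ stays in a fixed compact interval and $x(T_n)\equiv 0$, almost periodicity in its precompactness formulation places $\{u(T_n)\}$ in a fixed compact subset of $\dot H_x^{s_c}$; passing to a subsequence with $u(T_n)\to v$ strongly, one is reduced to $\langle v,\,e^{i(T_n-t)\Delta}|\nabla|^{2s_c}g\rangle_{L_x^2}\to 0$, which is just the statement that $e^{is\Delta}h\rightharpoonup 0$ in $L_x^2$ as $|s|\to\infty$ for fixed $h\in L_x^2$. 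This is the argument in \cite{KV:clay} and sidesteps the Lebesgue-exponent issue entirely.
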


In Section~\ref{section:mor} we treat the case $T_{max}=\infty$. As mentioned above, we use a space-localized Lin--Strauss Morawetz inequality as in the work of Bourgain \cite{Bou}. Combining this estimate with Lemma~\ref{lower bounds}, we derive the bound
	$$\smallint_I N(t)^{3-2s_c}\,dt\lesssim_u \vert I\vert^{s_c-1/2}$$
for any compact interval $I\subset[0,\infty)$. Recalling that $\inf N(t)\geq 1$ and $s_c<3/2$, we reach a contradiction by taking $I$ sufficiently large inside $[0,\infty)$. 

Sections~\ref{section:case two}~and~\ref{section:case three} are devoted to ruling out solutions as in Theorem~\ref{thm:reduction1} and Theorem~\ref{thm:reduction2} for which $s_c\in(0,\tfrac12)\cup(\tfrac34,1)$. In both regimes we break into two cases depending on the control given by the Lin--Strauss Morawetz inequality. Specifically, we consider separately the cases
	$$\smallint_0^{T_{max}} N(t)^{3-2s_c}\,dt<\infty\quad\text{and}\quad\smallint_0^{T_{max}} N(t)^{3-2s_c}\,dt=\infty,$$
where $T_{max}=\infty$ for $s_c\in(0,\tfrac12)$. We refer to the first case as the \emph{rapid frequency-cascade} scenario and to the second as the \emph{quasi-soliton} scenario. 

In Section~\ref{section:case two} we rule out solutions as in Theorem~\ref{thm:reduction1} with $3/4<s_c<1$. (In fact, the arguments are equally valid in the range $1/2<s_c\leq 3/4$; however, these cases have already been treated in the non-radial setting \cite{Mur:swamprat}.) The main technical tool we use is a \emph{long-time Strichartz estimate}, which appears as Proposition~\ref{prop:LTS2}. This type of estimate was originally developed by Dodson \cite{Dod3} for the mass-critical NLS; similar estimates have since appeared in the energy-critical, inter-critical, and energy-supercritical settings \cite{KV:revisit, MiaMurZhe, Mur:swamprat, Vis:revisit}. In all previous cases, the estimates have been adapted to the interaction Morawetz inequality. We instead prove long-time Strichartz estimates adapted to the Lin--Strauss Morawetz inequality.

In Section~\ref{section:rfc2} we rule out the rapid frequency-cascade scenario. We use Proposition~\ref{prop:LTS2} to show that such a solution would possess additional decay. We then use the conservation of mass to derive a contradiction.

In Section~\ref{section:flmor2} we establish a frequency-localized Lin--Strauss Morawetz inequality (Proposition~\ref{prop:flmor2}). We begin as in the proof of the standard Lin--Strauss Morawetz inequality, truncate the solution to high frequencies, and use Proposition~\ref{prop:LTS2} to control the resulting error terms.

In Section~\ref{section:soliton2} we use Proposition~\ref{prop:flmor2} to rule out the quasi-soliton scenario. In particular, we combine Proposition~\ref{prop:flmor2} with Lemma~\ref{lower bounds} to deduce a uniform bound on $\int_I N(t)^{3-2s_c}\,dt$ for any compact interval $I\subset[0,T_{max})$. We reach a contradiction by taking $I$ to be sufficiently large inside $[0,T_{max})$. 

In Section~\ref{section:case three} we rule out the existence of solutions as in Theorem~\ref{thm:reduction2}, in which case $0<s_c<1/2$. The structure of Section~\ref{section:case three} is quite similar to that of Section~\ref{section:case two}. In particular, we prove a long-time Strichartz estimate (Proposition~\ref{prop:LTS}) and a frequency-localized Lin--Strauss Morawetz inequality (Proposition~\ref{thm:flmor}). However, as mentioned above, in the range $0<s_c<1/2$ we need to work with the low frequency component of solutions rather than the high frequencies, and hence the details of the analysis change. We find that rapid frequency-cascades now possess additional regularity, which can be combined with the conservation of energy to derive a contradiction. For the quasi-soliton scenario, we again use a frequency-localized Morawetz inequality to derive a uniform bound on $\int_I N(t)^{3-2s_c}\,dt$, which yields the same contradiction as before.

To conclude, we discuss the possibility of extending Theorem~\ref{thm:main} to the non-radial setting. The most natural approach would be to work with the interaction Morawetz inequality (introduced in \cite{CKSTT0}) instead of the Lin--Strauss Morawetz inequality. In particular, for solutions $u:I\times\R^d\to\C$ to defocusing NLS, one has the following estimate:
	\begin{align}
	-\int_I\iint_{\R^d\times\R^d} &\vert u(t,x)\vert^2\Delta\big(\tfrac{1}{\vert x-y\vert}\big)\vert u(t,y)\vert^2\,dx\,dy\,dt
	\nonumber
	\\ &\lesssim \norm{\vert\nabla\vert^{1/2}u}_{L_t^\infty L_x^2(I\times\R^d)}^2\norm{u}_{L_t^\infty L_x^2(I\times\R^d)}^2.\label{eq:im}
	\end{align} 

The estimate \eqref{eq:im} controls the degree to which mass can interact with itself throughout all of $\R^d$, and hence it is useful even in the non-radial setting. As above, however, the right-hand side of \eqref{eq:im} may not be finite, and hence the estimate may not be directly applicable. In their pioneering work on the energy-critical NLS \cite{CKSTT}, Colliander--Keel--Staffilani--Takaoka--Tao addressed this issue by proving a frequency-localized version of \eqref{eq:im}. This approach has since been adopted in many different settings \cite{Dod3, Dod2, Dod1, DodF, KV:revisit, MiaMurZhe, Mur:swamprat, RV, Vis, Vis2, Vis:revisit}. To prove such an estimate, one must control the error terms that arise in the standard interaction Morawetz inequality when one applies a frequency cutoff to a solution to NLS. The long-time Strichartz estimates in the work of Dodson and others are designed to handle such error terms.

In \cite{Mur:swamprat}, the author proved long-time Strichartz estimates adapted to the interaction Morawetz inequality in three dimensions for $1/2<s_c<1$. In this range, one works with the high-frequency component of solutions to guarantee the finiteness of the right-hand side of \eqref{eq:im}. However, for $s_c>3/4$, one of the error terms that arises from the frequency cutoff cannot be controlled unless one also imposes a spatial truncation; this is the approach taken in the energy-critical setting, for example \cite{CKSTT, KV:revisit}. This spatial truncation results in additional error terms that require control over the solution at the level of $\dot{H}_x^1$. Thus, in the energy-critical case, one can use the conservation of energy and ultimately push the arguments through, while in the case $3/4<s_c<1$, one would need significant additional input to control the additional error terms. For a further discussion, see \cite{Mur:swamprat}.

For $s_c\geq 1$ in three dimensions, it becomes difficult to prove long-time Strichartz estimates adapted to the interaction Morawetz inequality. Killip--Visan  \cite{KV:revisit} were able to prove such an estimate at the level $s_c=1$, but they also show that their estimate is sharp in the sense that it is saturated by the ground state solution to the focusing equation. Thus in this case no stronger estimate could be proven without explicitly incorporating the defocusing nature of the equation. For $s_c>1$ there is no ground state to contend with, and hence it may still be possible to establish a long-time Strichartz estimate adapted to the interaction Morawetz in this setting; however, this seems to be a difficult problem and we have been unable to establish such an estimate. 

Finally, for $0<s_c<1/2$ there is also some difficulty associated to proving a frequency-localized interaction Morawetz inequality. In particular, as one needs to control both the $L_x^2$- and $\dot{H}_x^{1/2}$-norms of the solution in \eqref{eq:im}, one would need to truncate both the low and high frequencies of the solution. While this approach may ultimately prove to be tractable, we did not pursue this direction here. Instead, we chose to work with the Lin--Strauss Morawetz inequality, in which case it suffices to truncate the high frequencies only. As mentioned above, the use of the Lin--Strauss Morawetz inequality necessitates the restriction of our final result to the radial setting.

\subsection*{Acknowledgements} I am grateful to Rowan Killip and Monica Visan for helpful comments. This work was supported in part by NSF grant DMS-1265868 (P.I. Rowan Killip).


\section{Notation and useful lemmas}\label{section:lemmas}
\subsection{Some notation}  
For nonnegative $X,Y$ we write $X\lesssim Y$ to denote $X\leq CY$ for some $C>0$. If $X\lesssim Y\lesssim X$, we write $X\sim Y.$ The dependence of implicit constants on parameters will be indicated by subscripts, e.g. $X\lesssim_u Y$ denotes $X\leq CY$ for some $C=C(u)$. Dependence of constants on the ambient dimension or the power $p$ will be not be explicitly indicated.
		
	We use the expression $\text{\O}(X)$ to denote a finite linear combination of terms that resemble $X$ up to Littlewood--Paley projections, maximal functions, and complex conjugation. 
	
	We denote the nonlinearity $\vert u\vert^p u$ by $F(u)$. At various points throughout the paper, we will use the following basic pointwise estimates:
	\begin{align*}
	&\big\vert \vert u+v\vert^{p}(u+v)-\vert u\vert^{p}u\big\vert\lesssim \vert v\vert^{p+1}+\vert v\vert\,\vert u\vert^{p},
	\\ &\big\vert \vert u+v\vert^{p+2}-\vert u\vert^{p+2}-\vert v\vert^{p+2}\big\vert\lesssim \vert u\vert\,\vert v\vert^{p+1}+\vert u\vert^{p+1}\vert v\vert.
	\end{align*}

	For a time interval $I$, we write $L_t^qL_x^r(I\times\R^3)$ for the Banach space of functions $u:I\times\R^3\to\C$ equipped with the norm
		$$\xnorm{u}{q}{r}{I}:=\big(\smallint_I \norm{u(t)}_{L_x^r(\R^3)}^q\,dt\big)^{1/q},$$
with the usual adjustments when $q$ or $r$ is infinity. When $q=r$, we write $L_t^q L_x^q=L_{t,x}^q$. We will often abbreviate $\xnorm{f}{q}{r}{I}=\xonorm{f}{q}{r}$ and $\norm{f}_{L_x^r(\R^3)}=\norm{f}_{L_x^r}.$ 

	We define the Fourier transform on $\R^3$ by
		$$\wh{f}(\xi):=(2\pi)^{-3/2}\int_{\R^3}e^{-ix\cdot\xi}f(x)\,dx.$$
The fractional differentiation operator $\vert\nabla\vert^s$ is then defined by $\wh{\vert\nabla\vert^s f}(\xi):=\vert\xi\vert^s\wh{f}(\xi)$, and the corresponding homogeneous Sobolev norm is given by
		$$\norm{f}_{\dot{H}_x^s(\R^3)}:=\norm{\vert\nabla\vert^s f}_{L_x^2(\R^3)}.$$ 
		
\subsection{Basic harmonic analysis} We recall the standard Littlewood--Paley projection operators. Let $\varphi$ be a radial bump function supported on $\{\vert\xi\vert\leq11/10\}$ and equal to one on $\{\vert \xi\vert\leq 1\}.$ For $N\in 2^{\mathbb{Z}}$, we define 
	\begin{align*}
		&\wh{P_{\leq N} f}(\xi):=\wh{f_{\leq N}}(\xi):=\varphi(\xi/N)\wh{f}(\xi),
	\\	&\wh{P_{>N} f}(\xi) :=\wh{f_{>N}}(\xi):=(1-\varphi(\xi/N))\wh{f}(\xi),
	\\	&\wh{P_N f}(\xi) :=\wh{f_N}(\xi):=(\varphi(\xi/N)-\varphi(2\xi/N))\wh{f}(\xi).
	\end{align*}
We also define 
	$$P_{M<\cdot\leq N}:=P_{\leq N}-P_{\leq M}.$$ 

The Littlewood--Paley projection operators commute with derivative operators and the free propagator. They are self-adjoint, bounded on every $L_x^p$ and $\dot{H}_x^s$ space for $1\leq p\leq\infty$ and $s\geq 0$, and  bounded pointwise by the Hardy--Littlewood maximal function. They also obey the following standard Bernstein estimates.

\begin{lemma}[Bernstein] For $1\leq r\leq q\leq\infty$ and $s\geq 0$, 
	\begin{align*}
	\norm{\vert\nabla\vert^s f_N}_{L_x^r(\R^3)}&\sim N^s\norm{f_N}_{L_x^r(\R^3)},
	\\ \norm{\vert\nabla\vert^s f_{\leq N} }_{L_x^r(\R^3)}&\lesssim N^s\norm{f_{\leq N} }_{L_x^r(\R^3)},
	\\ \norm{f_{>N}}_{L_x^r(\R^3)}&\lesssim N^{-s}\norm{\vert\nabla\vert^s f_{>N}}_{L_x^r(\R^3)},
	\\ \norm{f_{\leq N} }_{L_x^q(\R^3)}&\lesssim N^{\frac{3}{r}-\frac{3}{q}}\norm{f_{\leq N}}_{L_x^r(\R^3)}.
	\end{align*}
\end{lemma}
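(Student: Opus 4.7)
The plan is to reduce every estimate to convolution with a fixed Schwartz kernel, extracting all $N$-dependence via dyadic scaling. Under $f(x) \mapsto f(x/N)$ the $L_x^r$-norm scales by $N^{3/r}$, the operator $|\nabla|^s$ becomes $N^s |\nabla|^s$, and $P_{\leq N}$ conjugates to $P_{\leq 1}$ (and $P_N$ to $P_1$). Matching powers of $N$ on each side of the four claimed inequalities, it suffices to verify each bound at $N=1$.

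For the operators themselves, I would realize each projection as a convolution: $P_{\leq N} f = K_{\leq N} \ast f$ with $K_{\leq N}(x) = N^3 \check{\varphi}(Nx)$, and $P_N f = K_N \ast f$ where $K_N(x) = N^3 K(Nx)$ for some Schwartz $K$ with Fourier support in an annulus. Both kernels have $L_x^1$-norm bounded uniformly in $N$, so Young's inequality gives $L_x^r$-boundedness uniformly in $N$. This already handles the case $s = 0$ of the first three estimates.

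For the derivative estimates, the central observation is that if $\tilde{\varphi}$ is a bump supported in an annulus $\{|\xi| \sim 1\}$, then $|\xi|^{\pm s} \tilde{\varphi}(\xi)$ is the Fourier transform of a Schwartz function. Choosing $\tilde{\varphi}$ equal to one on the support of the $P_N$-multiplier, I write $P_N = \tilde{P}_N P_N$ so that $|\nabla|^{\pm s} P_N$ is convolution with an $L_x^1$ kernel of norm $\sim N^{\pm s}$; this yields both directions of Bernstein at the frequency shell $P_N$. To upgrade the upper bound to $P_{\leq N}$, I would decompose dyadically as $|\nabla|^s f_{\leq N} = \sum_{M \leq N} |\nabla|^s f_M$ and sum the geometric series $\sum_{M \leq N} M^s \|f_M\|_{L_x^r} \lesssim N^s \|f_{\leq N}\|_{L_x^r}$, using $\|f_M\|_{L_x^r} \lesssim \|f_{\leq N}\|_{L_x^r}$ (from $L_x^r$-boundedness of $P_M$ applied to the fixed function $f_{\leq N}$). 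The inverse Bernstein $\|f_{>N}\|_{L_x^r} \lesssim N^{-s} \||\nabla|^s f_{>N}\|_{L_x^r}$ follows by the same dyadic argument, now summing $M^{-s}$ over $M > N$.

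Finally, for the mixed-integrability estimate I would apply Young's inequality with exponents $1 + \tfrac{1}{q} = \tfrac{1}{r} + \tfrac{1}{s}$ to the convolution representation of $P_{\leq N}$, yielding $\|f_{\leq N}\|_{L_x^q} \lesssim \|K_{\leq N}\|_{L_x^s} \|f_{\leq N}\|_{L_x^r}$, and a scaling computation gives $\|K_{\leq N}\|_{L_x^s} \sim N^{3(1 - 1/s)} = N^{3/r - 3/q}$. The main obstacle throughout is really just bookkeeping; no genuine analytic difficulty arises, though some minor care is needed for non-integer $s$ to confirm that the fractional multipliers produce $L_x^1$ convolution kernels once localized away from the frequency origin.
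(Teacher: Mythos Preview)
Your argument is correct and is the standard proof of the Bernstein inequalities. The paper, however, does not prove this lemma at all: it simply records the estimates as ``standard Bernstein estimates'' immediately after noting that the Littlewood--Paley projections are bounded on $L_x^p$ and $\dot{H}_x^s$, and then moves on. So there is nothing to compare against; you have supplied the textbook proof that the paper assumes the reader already knows.
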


We will use the following fractional calculus estimates from \cite{ChrWei}.

\begin{lemma}[Fractional product rule \cite{ChrWei}] Let $s> 0$ and $1<r,r_j,q_j<\infty$ satisfy $\tfrac{1}{r}=\tfrac{1}{r_j}+\tfrac{1}{q_j}$ for $j=1,2$. Then
	$$\norm{\vert\nabla\vert^s(fg)}_{L_x^r}\lesssim\norm{f}_{L_x^{r_1}}\norm{\vert\nabla\vert^s g}_{L_x^{q_1}}
							+\norm{\vert\nabla\vert^s f}_{L_x^{r_2}}\norm{g}_{L_x^{q_2}}.$$
\end{lemma}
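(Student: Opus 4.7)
The plan is to prove this fractional Leibniz rule via a Littlewood--Paley paraproduct decomposition, which is the cleanest route even though Christ--Weinstein's original proof uses a sharp maximal function / good-$\lambda$ argument. Write
$$fg = \pi_1(f,g) + \pi_2(f,g) + \pi_3(f,g),$$
where
$$\pi_1(f,g) := \smallsum_N f_{\leq N/8}\,g_N,\quad \pi_2(f,g):=\smallsum_N f_N\,g_{\leq N/8},\quad \pi_3(f,g):=\smallsum_{M\sim N} f_M g_N,$$
corresponding to low--high, high--low, and ``diagonal'' frequency interactions. The strategy is to bound $\pi_1$ by the first right-hand term, $\pi_2$ by the second, and to distribute the derivative either way on $\pi_3$.

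First I would handle $\pi_1(f,g)$. Because $f_{\leq N/8}g_N$ has Fourier support in the shell $\{|\xi|\sim N\}$, $|\nabla|^s$ acts on this piece essentially as multiplication by $N^s$, so $|\nabla|^s\pi_1(f,g)$ is (up to harmless Fourier multipliers) $\sum_N f_{\leq N/8}(|\nabla|^s g)_N$. Using the Littlewood--Paley square function characterization $\||\nabla|^s g\|_{L^{q_1}}\sim \|(\sum_N |(|\nabla|^s g)_N|^2)^{1/2}\|_{L^{q_1}}$, valid for $1<q_1<\infty$, the pointwise bound $|f_{\leq N/8}|\lesssim Mf$ by the Hardy--Littlewood maximal function, and H\"older combined with the Fefferman--Stein vector-valued maximal inequality, I get
$$\||\nabla|^s\pi_1(f,g)\|_{L^r}\lesssim \|Mf\|_{L^{r_1}}\|(\smallsum_N|(|\nabla|^s g)_N|^2)^{1/2}\|_{L^{q_1}}\lesssim \|f\|_{L^{r_1}}\||\nabla|^s g\|_{L^{q_1}}.$$
Symmetrically $\pi_2$ gives the second term on the right-hand side.

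The main obstacle is the diagonal piece $\pi_3(f,g)$: here $f_M g_N$ with $M\sim N$ can have output frequencies all the way down to zero, so $|\nabla|^s$ no longer acts as $\sim N^s$. To treat it, I would decompose in the \emph{output} frequency $K$, writing $|\nabla|^s\pi_3(f,g)=\sum_K \sum_{M\sim N\gtrsim K}|\nabla|^s(f_M g_N)_K$. On each $K$-piece Bernstein contributes a factor $K^s$, and since $K\lesssim N$ this loss is controlled by $(K/N)^s$ (with $s>0$) after pulling out the factor $N^s$ and moving it onto whichever of $f_M$ or $g_N$ is convenient. Summing the geometric series in $K/N$ and then applying the square-function / Fefferman--Stein bound exactly as in the $\pi_1$ analysis produces both right-hand side terms (one chooses which factor to put the derivative on according to which pairing $(r_j,q_j)$ is available). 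The hypothesis that every $r_j,q_j\in(1,\infty)$ is used precisely to invoke the square function characterization and the vector-valued maximal inequality at each step.
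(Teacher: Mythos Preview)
The paper does not prove this lemma at all; it simply records it as a known result from Christ--Weinstein \cite{ChrWei} and moves on. So there is no ``paper's own proof'' to compare against.

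Your paraproduct argument is correct and is in fact the standard modern route to the fractional Leibniz rule. The treatment of $\pi_1$ and $\pi_2$ is clean, and your handling of the diagonal piece $\pi_3$---decomposing in the output frequency $K$, gaining $(K/N)^s$ from Bernstein, and summing the resulting geometric series---is the right idea. One small point worth making explicit: when you write ``up to harmless Fourier multipliers,'' you are implicitly using that the multiplier $|\xi|^s/N^s$ restricted to the annulus $\{|\xi|\sim N\}$ extends to a smooth bump and hence gives an $L^p$-bounded operator uniformly in $N$; this is routine but should be said. As you note, the original Christ--Weinstein argument proceeds instead via a pointwise sharp maximal function bound and a good-$\lambda$ inequality, which has the advantage of yielding endpoint and weighted versions but is less transparent than the paraproduct approach you give.
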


\begin{lemma}[Fractional chain rule \cite{ChrWei}] Suppose $G\in C^1(\C)$ and $s\in(0,1]$ Let $1<r,r_2<\infty$ and $1<r_1\leq\infty$ be such that $\tfrac{1}{r}=\tfrac{1}{r_1}+\tfrac{1}{r_2}.$ Then
	$$\norm{\vert\nabla\vert^s G(u)}_{L_x^r}\lesssim\norm{G'(u)}_{L_x^{r_1}}\norm{\vert\nabla\vert^s u}_{L_x^{r_2}}.$$ 
\end{lemma}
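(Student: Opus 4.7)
For $s=1$ the bound is immediate from the distributional chain rule $\nabla G(u) = G'(u)\nabla u$, combined with H\"older's inequality at exponents $\tfrac{1}{r} = \tfrac{1}{r_1} + \tfrac{1}{r_2}$. The substantive case is $s \in (0,1)$, where my plan is to use the Strichartz square function characterization
$$\norm{\ntw{s} f}_{L_x^r} \sim \bnorm{\big(\smallint_{\R^3} \tfrac{\vert f(\cdot+y)-f(\cdot)\vert^2}{\vert y\vert^{3+2s}}\,dy\big)^{1/2}}_{L_x^r},$$
which holds for $1<r<\infty$ and $0<s<1$. Writing $D_s f$ for the square function on the right, the strategy is to reduce the estimate to the pointwise inequality
$$D_s G(u)(x) \lesssim M[G'(u)](x)\cdot D_s u(x),$$
where $M$ is the Hardy--Littlewood maximal function. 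Once this is in hand, the conclusion follows from the $L^{r_1}$-boundedness of $M$ (trivial when $r_1=\infty$) and H\"older's inequality.

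For the pointwise bound I would start from the fundamental theorem of calculus
$$G(u(x+y)) - G(u(x)) = \big(u(x+y)-u(x)\big)\smallint_0^1 G'\big(u(x) + \theta[u(x+y)-u(x)]\big)\,d\theta,$$
which factors $\vert G(u(x+y))-G(u(x))\vert^2$ as $\vert u(x+y)-u(x)\vert^2$ times the square of the $\theta$-integral. Substituting into the definition of $D_s G(u)(x)^2$, the goal is to pull out a factor $\lesssim M[G'(u)](x)^2$, leaving the integral in $y$ that defines $D_s u(x)^2$. The $r_1=\infty$ case is simpler because $\norm{G'(u)}_{L_x^\infty}$ directly dominates the $\theta$-integral.

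The hard part is precisely the pointwise maximal-function dominance of the $\theta$-integral, since $G$ is only $C^1$ and no global bound on $G'$ is available. The resolution, which is the core of the Christ--Weinstein argument, is to decompose the $y$-integral depending on whether $\vert u(x+y)-u(x)\vert$ is small compared to $\vert u(x)\vert$ or not: on the small regime, continuity of $G'$ together with Lebesgue differentiation yields $\vert G'(u_\theta)\vert \approx \vert G'(u(x))\vert \lesssim M[G'(u)](x)$, while on the large regime one averages $G'(u)$ over a ball of radius comparable to $\vert y\vert$ and invokes the standard maximal dominance to control $\sup_\theta\vert G'(u_\theta(x,y))\vert$. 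Once these two pieces are combined, the pointwise bound above holds; then $L^r$-boundedness of $M$ and H\"older's inequality finish the argument.
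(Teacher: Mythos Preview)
The paper does not prove this lemma; it is quoted from \cite{ChrWei} as background, so there is nothing here to compare against. Assessing your sketch on its own: the case $s=1$ and the case $r_1=\infty$ are fine, but the claimed pointwise bound $D_s G(u)(x)\lesssim M[G'(u)](x)\,D_s u(x)$ for general $r_1<\infty$ has a genuine gap. The $\theta$-integral involves $G'$ evaluated at the interpolated values $(1-\theta)u(x)+\theta u(x+y)$, which lie on the segment in $\C$ joining $u(x)$ to $u(x+y)$ and are \emph{not} of the form $G'(u(z))$ for any spatial point $z$. A spatial average of $G'\circ u$---which is all that $M[G'(u)](x)$ sees---has no reason to control them, so your ``large regime'' step is unjustified. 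The ``small regime'' step is likewise incomplete: with $G$ merely $C^1$, the derivative $G'$ carries no quantitative modulus of continuity, and $|G'(u_\theta)|\approx|G'(u(x))|$ does not follow from $|u(x+y)-u(x)|\ll|u(x)|$. One can arrange $G$ and $u$ so that $G'\circ u$ is uniformly small while $G'$ is large on the segment $[u(x),u(x+y)]$, defeating the pointwise inequality outright.

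This is also not how Christ--Weinstein actually argue. Their proof telescopes $G(u)=\sum_j\big(G(P_{\leq 2^j}u)-G(P_{\leq 2^{j-1}}u)\big)$ and applies the mean value theorem to each increment, so that $G'$ is only ever evaluated at functions of the form $P_{\leq 2^{j-1}}u(x)+\theta P_{2^j}u(x)$; these are honest spatial functions of $x$ and can be tied back to $G'(u)$ through maximal and Littlewood--Paley square-function estimates. The Gagliardo square-function route you outline does succeed when $G$ is Lipschitz (equivalently $r_1=\infty$), but it does not extend to general $C^1$ nonlinearities in the manner you describe.
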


We next record a paraproduct estimate in the spirit of \cite[Lemma~2.3]{Vis:revisit} (see also \cite{MiaMurZhe, Mur:swamprat, RV, Vis, Vis2}). We will need this estimate in Section~\ref{section:rfc2}. 

\begin{lemma}[Paraproduct estimate]\label{lem:paraproduct} Let $1<r<r_1<\infty$, $1<r_2<\infty$, and $0<s<1$ satisfy $\tfrac{1}{r_1}+\tfrac{1}{r_2}=\tfrac{1}{r}+\tfrac{s}{3}<1.$ Then
	$$\norm{\vert\nabla\vert^{-s}(fg)}_{L_x^r(\R^3)}\lesssim\norm{\vert\nabla\vert^{-s}f}_{L_x^{r_1}(\R^3)}
		\norm{\vert\nabla\vert^s g}_{L_x^{r_2}(\R^3)}$$
\end{lemma}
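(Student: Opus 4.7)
The plan is to use a Bony-style paraproduct decomposition in the spirit of \cite{Vis:revisit, MiaMurZhe, Mur:swamprat}. I write
$$fg = \sum_N f_N g_{\leq N/8} + \sum_N f_{\leq N/8} g_N + \sum_N f_N g_{\sim N} =: \pi_1 + \pi_2 + \pi_3,$$
capturing the high--low, low--high, and high--high frequency interactions. The summands of $\pi_1$ and $\pi_2$ are Fourier-localized to $\vert\xi\vert\sim N$, while those of $\pi_3$ are only localized to $\vert\xi\vert\lesssim N$. Throughout I will exploit the identities $N^{-s} f_N \sim \tilde P_N\vert\nabla\vert^{-s} f$ and $N^s g_N \sim \tilde P_N\vert\nabla\vert^s g$, where $\tilde P_N$ denotes a suitably fattened Littlewood--Paley projector.

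For $\pi_1$, the frequency localization of each summand lets $\vert\nabla\vert^{-s}$ absorb an $N^{-s}$ factor into $f_N$. Pulling out the Littlewood--Paley square function on the $f$-side and the pointwise bound $\sup_N\vert g_{\leq N/8}\vert \lesssim Mg$ on the $g$-side, H\"older together with the Sobolev embedding $\norm{g}_{L_x^\rho} \lesssim \norm{\vert\nabla\vert^s g}_{L_x^{r_2}}$ (with $\tfrac{1}{\rho} = \tfrac{1}{r_2} - \tfrac{s}{3}$) closes the estimate; the exponent balance comes directly from the hypothesis $\tfrac{1}{r_1}+\tfrac{1}{r_2}=\tfrac{1}{r}+\tfrac{s}{3}$. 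For $\pi_2$, the structure is dual: the square function now falls on $g$ and the supremum on $f$. The key step is to show $\sup_N N^{-s}\vert f_{\leq N/8}\vert$ is dominated pointwise by the Hardy--Littlewood maximal function of $\vert\nabla\vert^{-s} f$, which follows from the dyadic expansion $f_{\leq N/8} \sim \sum_{L\leq N/8} L^s \tilde P_L\vert\nabla\vert^{-s} f$ together with the summability $\sum_{L\leq N/8} L^s\lesssim N^s$ (crucially using $s>0$). Sobolev embedding again matches the exponents.

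The high--high term $\pi_3$ is the main obstacle: the product $f_N g_{\sim N}$ can have arbitrarily low output frequency, so $\vert\nabla\vert^{-s}$ recovers no $N^{-s}$ gain, and indeed $\vert\nabla\vert^{-s}$ is not bounded on $L_x^r$ near zero frequency. I would handle it by duality: for $\phi\in L_x^{r'}$ with $\norm{\phi}_{L_x^{r'}}\leq 1$,
$$\ip{\vert\nabla\vert^{-s}\pi_3}{\phi} = \sum_N \smallint f_N g_{\sim N}\vert\nabla\vert^{-s}\phi\,dx.$$
The crucial observation is that substituting $f_N\sim N^s\tilde P_N\vert\nabla\vert^{-s} f$ and $g_{\sim N}\sim N^{-s}\tilde P_N\vert\nabla\vert^s g$ makes the $N^{\pm s}$ factors cancel exactly. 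Applying pointwise Cauchy--Schwarz in $N$, H\"older with exponents $(r_1,r_2,a_3)$ satisfying $\tfrac{1}{a_3}=1-\tfrac{1}{r_1}-\tfrac{1}{r_2}$, and the Littlewood--Paley square function then yield
$$\big\vert \ip{\vert\nabla\vert^{-s}\pi_3}{\phi}\big\vert \lesssim \norm{\vert\nabla\vert^{-s} f}_{L_x^{r_1}}\norm{\vert\nabla\vert^s g}_{L_x^{r_2}}\norm{\vert\nabla\vert^{-s}\phi}_{L_x^{a_3}}.$$
By Hardy--Littlewood--Sobolev, $\norm{\vert\nabla\vert^{-s}\phi}_{L_x^{a_3}} \lesssim \norm{\phi}_{L_x^{r'}}\leq 1$ with the matching constraint $\tfrac{1}{a_3}=\tfrac{1}{r'}-\tfrac{s}{3}$, which is precisely the hypothesis. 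The strict inequality $\tfrac{1}{r_1}+\tfrac{1}{r_2}<1$ ensures $a_3\in(1,\infty)$, so both the duality and the Hardy--Littlewood--Sobolev step are legitimate.
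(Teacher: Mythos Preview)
Your argument is correct. The paper does not actually prove this lemma; it merely records the statement and refers to \cite[Lemma~2.3]{Vis:revisit} and the related works \cite{MiaMurZhe, Mur:swamprat, RV, Vis, Vis2}. Your Bony paraproduct decomposition, with square-function/maximal-function control of $\pi_1$ and $\pi_2$ (closed via the Sobolev embedding $L_x^{r_2}\hookrightarrow\dot H_x^{-s,\rho}$, which is available precisely because $r<r_1$ forces $\tfrac{1}{r_2}>\tfrac{s}{3}$) and the duality/HLS treatment of the diagonal piece $\pi_3$ (legitimate since $\tfrac{1}{r}+\tfrac{s}{3}<1$ gives $r'<\tfrac{3}{s}$), is exactly the argument those references carry out. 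So your proof is both correct and aligned with the literature the paper invokes.
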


We also record Hardy's inequality, which appears in Sections~\ref{section:flmor2}~ and ~\ref{section:flmor}.

\begin{lemma}[Hardy] For $0< s<d$ and $1<r<d/s$, 
	$$\norm{\tfrac{1}{\vert x\vert^s}f}_{L_x^r(\R^d)}\lesssim\norm{\vert\nabla\vert^s f}_{L_x^r(\R^d)}.$$
\end{lemma}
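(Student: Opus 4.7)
The plan is to reduce Hardy's inequality to the Stein--Weiss weighted Hardy--Littlewood--Sobolev inequality. First set $g := |\nabla|^s f$, so that the right-hand side of the claimed bound is simply $\|g\|_{L^r}$. For $f$ in the Schwartz class one has the Riesz potential representation $f = c_{d,s}\, I_s g$, where
$$I_s g(x) := \int_{\R^d} \frac{g(y)}{|x-y|^{d-s}}\, dy$$
is the Riesz potential of order $s$. Substituting, Hardy's inequality becomes the weighted $L^r$ bound
$$\bigl\| |x|^{-s} I_s g \bigr\|_{L^r(\R^d)} \lesssim \|g\|_{L^r(\R^d)}.$$
This is a direct instance of the Stein--Weiss inequality with matching Lebesgue indices $p = q = r$, left weight $|x|^{-s}$, and trivial right weight. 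The hypotheses of Stein--Weiss then reduce to $0 < s < d$, $1 < r < \infty$, and $s < d/r$ (equivalently $r < d/s$), which are precisely the assumptions of the lemma. Extending from Schwartz data to the full class for which the right-hand side is finite is done by density.

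As a fallback one can avoid citing Stein--Weiss and give a self-contained argument: for $s = 1$ one has the classical Hardy inequality via integration by parts; for general $s \in (0,1)$, complex interpolation between $s = 0$ (trivial) and $s = 1$ yields the result; and for $s > 1$ one iterates. The main (minor) technical point in any of these approaches is the justification of the Riesz potential representation for $f$ in the relevant homogeneous Sobolev space, but this is standard and poses no serious obstacle for the parameter ranges under consideration. Concretely, the only substantive analytic input is that $I_s$ is bounded from $L^r$ into the weighted space $L^r(|x|^{-sr}\, dx)$ for $sr < d$, which is the weighted HLS estimate of Stein--Weiss.
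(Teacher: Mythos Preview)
Your argument via the Stein--Weiss weighted Hardy--Littlewood--Sobolev inequality is correct and is one of the standard routes to this estimate. Note, however, that the paper does not actually supply a proof of this lemma: it simply records Hardy's inequality as a known result for later use, without argument or citation. So there is nothing to compare against --- your write-up supplies strictly more detail than the paper does.
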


Using Hardy's inequality and interpolation, we can also derive the following estimate for $0\leq s\leq 1$:
	\begin{equation}\label{hardys}	\norm{\vert\nabla\vert^{s}(\tfrac{x}{\vert x\vert} u)}_{L_x^2}\lesssim \norm{\vert\nabla\vert^s u}_{L_x^2}.
	\end{equation}

Finally, we record a few facts about the Hardy--Littlewood maximal function, which we denote by $\M$. Along with the standard maximal function estimate (i.e. the fact that $\M$ is bounded on $L_x^r$ for $1<r\leq\infty$), we will use the fact that
	$$\norm{\nabla\M (f)}_{L_x^r}\lesssim\norm{\nabla f}_{L_x^r}$$
for $1<r<\infty$ (see \cite{Kin}, for example).

\subsection{Strichartz estimates} We record here the standard Strichartz estimates for the Schr\"odinger equation, as well as a bilinear variant that will be used in Section~\ref{section:lts}.

We begin with a definition. 

\begin{definition}[Admissible pairs]\label{def:admissible} We call a pair of exponents $(q,r)$ \emph{admissible} if
	$$\tfrac{2}{q}+\tfrac{3}{r}=\tfrac{3}{2}\quad\text{and}\quad 2\leq q\leq\infty.$$
For a time interval $I$, we define
	$$\norm{u}_{S(I)}:=\sup\big\{\xnorm{u}{q}{r}{I}:(q,r)\ \text{admissible}\big\}.$$ 
We define $S(I)$ to be the closure of the test functions under this norm.
\end{definition}

We can now state the standard Strichartz estimates in the form that we need.

\begin{proposition}[Strichartz \cite{GinVel, KeeTao, Strichartz}]\label{lem:strichartz} Let $u:I\times\R^3\to\C$ be a solution to $$(i\partial_t+\Delta)u=F$$ and let $s\geq 0$. Then
	$$\norm{\vert\nabla\vert^s u}_{S(I)}\lesssim\norm{u(t_0)}_{\dot{H}_x^{s}}+\min\big\{\xnorm{\vert\nabla\vert^s F}{2}{6/5}{I},\xnorms{\vert\nabla\vert^s F}{10/7}{I}\big\}.$$
for any $t_0\in I$. 
\end{proposition}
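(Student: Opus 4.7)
The plan is to reduce everything to the standard Strichartz inequalities for the free propagator $e^{it\Delta}$ as established in \cite{GinVel, KeeTao, Strichartz}, and then exploit the fact that $|\nabla|^s$ commutes with both $e^{it\Delta}$ and the forcing term. First I would apply $|\nabla|^s$ to the equation, so that $v := |\nabla|^s u$ satisfies $(i\partial_t+\Delta)v = |\nabla|^s F$. By the Duhamel formula,
$$v(t) = e^{i(t-t_0)\Delta} v(t_0) - i\int_{t_0}^t e^{i(t-s)\Delta}|\nabla|^s F(s)\,ds,$$
and it suffices to prove the two standalone estimates (a) $\|e^{it\Delta}f\|_{S(I)} \lesssim \|f\|_{L_x^2}$ and (b) an inhomogeneous bound for the Duhamel integral in terms of $\|G\|_{L_t^2 L_x^{6/5}}$ and $\|G\|_{L_{t,x}^{10/7}}$.

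The homogeneous bound (a) follows from the standard $TT^*$ argument. The dispersive estimate $\|e^{it\Delta}f\|_{L_x^\infty} \lesssim |t|^{-3/2}\|f\|_{L_x^1}$, interpolated against $L_x^2$-unitarity of $e^{it\Delta}$, yields $\|e^{it\Delta}f\|_{L_x^r} \lesssim |t|^{-3(\frac12-\frac1r)}\|f\|_{L_x^{r'}}$ for $2\leq r\leq\infty$. Combined with Hardy--Littlewood--Sobolev (applied in the time variable), this delivers the estimate for all non-endpoint admissible pairs $(q,r)$ with $q>2$. The endpoint pair $(q,r)=(2,6)$, which is the hardest ingredient, requires the bilinear interpolation scheme of Keel--Tao.

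For (b), I would use duality together with the Christ--Kiselev lemma (which is needed to pass from the full integral $\int_{\R}$ to the truncated Duhamel integral $\int_{t_0}^t$; it applies since $2 > 10/7$, so no endpoint issue arises). Duality against the homogeneous estimate for any admissible pair $(\tilde q,\tilde r)$ yields
$$\Bigl\|\int_{t_0}^t e^{i(t-s)\Delta} G(s)\,ds\Bigr\|_{S(I)} \lesssim \|G\|_{L_t^{\tilde q'}L_x^{\tilde r'}}.$$
The choice $(\tilde q,\tilde r)=(2,6)$ produces the $L_t^2 L_x^{6/5}$ term, while the choice $(\tilde q,\tilde r)=(10/3,10/3)$ — which is admissible since $\tfrac{2}{10/3}+\tfrac{3}{10/3}=\tfrac32$ — produces the $L_{t,x}^{10/7}$ term. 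Taking the minimum yields the stated bound.

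The main obstacle is, as always, the endpoint $L_t^2 L_x^6$ estimate, which lies outside the reach of the naive $TT^*$/Hardy--Littlewood--Sobolev scheme and genuinely requires the bilinear argument of Keel--Tao. Everything else is a routine combination of dispersive decay, duality, and the Christ--Kiselev lemma; the commutation of $|\nabla|^s$ with $e^{it\Delta}$ ensures the fractional derivative plays no role beyond being carried through the argument.
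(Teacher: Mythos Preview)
The paper does not supply a proof of this proposition; it is simply stated with citations to \cite{GinVel, KeeTao, Strichartz} as a standard result. Your sketch is the correct and standard route, and nothing more is expected here.

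One small imprecision is worth flagging. You invoke the Christ--Kiselev lemma to pass from the untruncated to the truncated Duhamel integral, noting ``it applies since $2>10/7$.'' That justification covers the $L_{t,x}^{10/7}$ dual pair, but the $S(I)$ norm on the left includes the endpoint $L_t^2 L_x^6$. When the forcing lies in $L_t^2 L_x^{6/5}$ and the output is measured in $L_t^2 L_x^6$, both time exponents equal $2$ and Christ--Kiselev does not apply. This double-endpoint retarded estimate is precisely what the Keel--Tao bilinear argument establishes directly (their proof handles the retarded operator, not just the homogeneous estimate), so the conclusion is still valid --- you just need to cite Keel--Tao for the inhomogeneous endpoint case rather than route it through Christ--Kiselev. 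With that correction, your outline is complete.
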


We next record a bilinear Strichartz estimate. The particular version we need can be deduced from \cite[Corollary~4.19]{KV:clay}.

\begin{proposition}[Bilinear Strichartz]\label{lem:bilinear} Let $0<s_c<1$. For a time interval $I$ and frequencies $M,N>0$, we have
	$$\xnorms{u_{\leq M}v_{>N}}{2}{I}\lesssim M^{1-s_c}N^{-1/2}\norm{\vert\nabla\vert^{s_c} 
		u}_{S^*(I)}\norm{v_{>N}}_{S^*(I)},$$
where
	$$\norm{u}_{S^*(I)}:=\norm{u}_{L_t^\infty L_x^2(I\times\R^3)}+\xnorms{(i\partial_t+\Delta)u}{10/7}{I}.$$ 
\end{proposition}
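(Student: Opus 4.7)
The plan is to reduce Proposition~\ref{lem:bilinear} to the classical Bourgain-type bilinear Strichartz estimate in three dimensions (essentially \cite[Corollary~4.19]{KV:clay}) via a Littlewood--Paley decomposition, and then trade $L_x^2$-based control of $u_{\leq M}$ for $\dot H_x^{s_c}$-based control using Bernstein. The building block is the following standard estimate: for inhomogeneous Schr\"odinger solutions $f,g$ with spatial Fourier transforms supported in $\{|\xi|\sim M_1\}$ and $\{|\xi|\sim N_1\}$ respectively, with $M_1\leq N_1$, one has
$$\xnorms{fg}{2}{I} \lesssim M_1 N_1^{-1/2} \norm{f}_{S^*(I)} \norm{g}_{S^*(I)}.$$

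First I would decompose $u_{\leq M}=\sum_{M_1\leq M}u_{M_1}$ and $v_{>N}=\sum_{N_1>N}v_{N_1}$. For the generic pairs with $M_1\leq N_1/4$, the product $u_{M_1}v_{N_1}$ has spatial Fourier support in $\{|\xi|\sim N_1\}$, so for each fixed $M_1$ the pieces $\{u_{M_1}v_{N_1}\}_{N_1}$ are almost orthogonal in $L_x^2$ at each time $t$. Combining the building block above with the Bernstein-type bound $\norm{u_{M_1}}_{S^*(I)}\lesssim M_1^{-s_c}\norm{|\nabla|^{s_c}u}_{S^*(I)}$ gives
$$\xonorms{u_{M_1}v_{N_1}}{2} \lesssim M_1^{1-s_c} N_1^{-1/2} \norm{|\nabla|^{s_c}u}_{S^*(I)} \norm{v_{N_1}}_{S^*(I)}.$$
Summing in $N_1>N$ using $N_1^{-1}\leq N^{-1}$ together with the square-function equivalence $\sum_{N_1>N}\norm{v_{N_1}}_{S^*(I)}^2\lesssim\norm{v_{>N}}_{S^*(I)}^2$ (Plancherel for the $L_t^\infty L_x^2$ component and Littlewood--Paley theory for the $L_{t,x}^{10/7}$ component) then yields
$$\xonorms{u_{M_1}v_{>N}}{2} \lesssim M_1^{1-s_c} N^{-1/2} \norm{|\nabla|^{s_c}u}_{S^*(I)} \norm{v_{>N}}_{S^*(I)}.$$
The sum in $M_1\leq M$ is now a convergent geometric series because $1-s_c>0$, producing the advertised factor $M^{1-s_c}$.

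The step that requires some extra care is the near-diagonal regime $M_1\sim N_1$, which arises only when $M\gtrsim N$. For such pairs the product $u_{M_1}v_{N_1}$ no longer localizes to a single dyadic shell, so orthogonality in $N_1$ must be replaced by a triangle inequality. Since only $O(1)$ near-diagonal $N_1$'s appear for each $M_1$, and since the building block (applied with whichever of $M_1,N_1$ is smaller) yields a contribution of size $M_1^{1/2-s_c}\norm{|\nabla|^{s_c}u}_{S^*(I)}\norm{v_{N_1}}_{S^*(I)}$ per pair, a Cauchy--Schwarz summation over the $M_1\in[N/4,M]$ relevant dyadic scales is bounded by $\big(\sum_{M_1} M_1^{1-2s_c}\big)^{1/2}\norm{v_{>N}}_{S^*(I)}\norm{|\nabla|^{s_c}u}_{S^*(I)}$. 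An elementary case analysis according to the sign of $1-2s_c$ shows that $\big(\sum_{M_1\in[N/4,M]} M_1^{1-2s_c}\big)^{1/2}\lesssim M^{1-s_c}N^{-1/2}$ whenever $M\geq N$, which absorbs this contribution into the main bound. In particular, the hypothesis $s_c<1$ enters essentially only through the geometric summability of the main sum in $M_1$.
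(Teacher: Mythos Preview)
The paper does not supply its own proof of this proposition; it merely records the estimate as a consequence of \cite[Corollary~4.19]{KV:clay}. Your sketch is exactly the standard deduction that citation encodes: decompose dyadically, invoke the single-scale bilinear building block, trade $\|u_{M_1}\|_{S^*}$ for $M_1^{-s_c}\||\nabla|^{s_c}u\|_{S^*}$ via Bernstein, and sum geometrically using $s_c<1$.

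Two small technical points deserve tightening. First, your square-function claim $\sum_{N_1>N}\|v_{N_1}\|_{S^*}^2\lesssim\|v_{>N}\|_{S^*}^2$ is not literally ``Plancherel'' for the $L_t^\infty L_x^2$ component, since $\sup_t$ and the $\ell^2$-sum over $N_1$ do not commute in the required direction. It does hold once you insert a Duhamel/Strichartz step, bounding $\|v_{N_1}\|_{L_t^\infty L_x^2}\lesssim\|v_{N_1}(t_0)\|_{L_x^2}+\|P_{N_1}[(i\partial_t+\Delta)v]\|_{L_{t,x}^{10/7}}$ and then applying Plancherel at the fixed time $t_0$ together with Littlewood--Paley (and Minkowski, since $10/7<2$) for the forcing. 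Alternatively, for $M_1\le N/4$ the building block applies directly to the pair $u_{M_1},\,v_{>N}$ without decomposing $v$, which sidesteps the issue entirely. Second, you treat the regimes $M_1\le N_1/4$ and $M_1\sim N_1$ but omit $N_1\le M_1/4$, which also occurs when $M\gtrsim N$; this case is symmetric (the product now localizes to $|\xi|\sim M_1$, giving orthogonality in $M_1$) and is absorbed by the same elementary sum you used for the near-diagonal piece.
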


We use Proposition~\ref{lem:bilinear} in Section~\ref{section:lts}. In that setting, we take $u=v$ an almost periodic solution to \eqref{nls} and $I=J_k$ a characteristic subinterval. In particular, we use the following corollary.

\begin{corollary}\label{cor:bilinear} Let $0<s_c<1$ and $s>0$. Suppose $u$ is an almost periodic solution to \eqref{nls} such that $u\in L_t^\infty\dot{H}_x^s$ and let $J_k$ be a characteristic subinterval. Then
	$$\xnorms{u_{\leq M}u_{>N}}{2}{J_k}\lesssim_u M^{1-s_c}N^{-1/2-s}.$$
\end{corollary}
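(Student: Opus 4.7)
The plan is to apply Proposition~\ref{lem:bilinear} with $v=u$ and $I=J_k$, reducing the problem to bounding the two $S^*(J_k)$-norms that appear: $\||\nabla|^{s_c}u\|_{S^*(J_k)}\lesssim_u 1$ and $\|u_{>N}\|_{S^*(J_k)}\lesssim_u N^{-1/2-s}/N^{-1/2}=N^{-s}$. Recall that $\|w\|_{S^*(J_k)} = \|w\|_{L_t^\infty L_x^2(J_k\times\R^3)}+\|(i\partial_t+\Delta)w\|_{L_{t,x}^{10/7}(J_k\times\R^3)}$, so each factor splits into a kinetic piece and a forcing piece (the latter being a frequency projection of the nonlinearity $F(u)=|u|^pu$).

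For the first factor, almost periodicity gives $\||\nabla|^{s_c}u\|_{L_t^\infty L_x^2}\lesssim_u 1$, and for the forcing piece $|\nabla|^{s_c}F(u)$ the idea is to use the fractional chain rule together with Strichartz-admissible Hölder exponents, namely
$$\||\nabla|^{s_c}F(u)\|_{L_{t,x}^{10/7}(J_k\times\R^3)}\lesssim \|u\|_{L_{t,x}^{5p/2}(J_k\times\R^3)}^p\,\||\nabla|^{s_c}u\|_{L_{t,x}^{10/3}(J_k\times\R^3)}.$$
The pair $(10/3,10/3)$ is admissible (Definition~\ref{def:admissible}), and on a characteristic subinterval the factors on the right are each $\lesssim_u 1$: the first by almost periodicity and Lemma~\ref{lem:sb}, the second by interpolating the Strichartz norm $\||\nabla|^{s_c}u\|_{L_t^2L_x^6}$ (which is $\sim_u 1$ on a characteristic $J_k$) against $\||\nabla|^{s_c}u\|_{L_t^\infty L_x^2}\lesssim_u 1$.

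For the second factor, Bernstein together with the a priori assumption $u\in L_t^\infty\dot H_x^s$ yields $\|u_{>N}\|_{L_t^\infty L_x^2}\lesssim N^{-s}\||\nabla|^su\|_{L_t^\infty L_x^2}\lesssim_u N^{-s}$. For the forcing piece, Bernstein again gives
$$\|P_{>N}F(u)\|_{L_{t,x}^{10/7}(J_k\times\R^3)}\lesssim N^{-s}\||\nabla|^sF(u)\|_{L_{t,x}^{10/7}(J_k\times\R^3)},$$
and the right-hand side is $\lesssim_u N^{-s}$ by the very same fractional chain rule estimate used above (with $s_c$ replaced by $s$; note $0<s\le 1$ is allowed, and for larger $s$ one interpolates or iterates the chain rule in the usual way). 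Combining everything,
$$\|u_{\leq M}u_{>N}\|_{L_{t,x}^2(J_k\times\R^3)}\lesssim_u M^{1-s_c}N^{-1/2}\cdot 1\cdot N^{-s}=M^{1-s_c}N^{-1/2-s}.$$

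The main obstacle, as usual in fractional settings, is verifying the Strichartz/chain-rule bound for $|\nabla|^sF(u)$ uniformly on each characteristic subinterval $J_k$ (and, if necessary, a variant when $s>1$ where the chain rule in the form cited needs iteration). Everything else is bookkeeping: the bilinear estimate of Proposition~\ref{lem:bilinear} is the engine, almost periodicity supplies the kinetic bound, and Lemma~\ref{lem:sb} upgrades this to full Strichartz control on $J_k$.
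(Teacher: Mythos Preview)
Your proposal is essentially correct and follows the same route as the paper: apply Proposition~\ref{lem:bilinear} on $J_k$, then verify $\||\nabla|^{s_c}u\|_{S^*(J_k)}\lesssim_u 1$ and $\|u_{>N}\|_{S^*(J_k)}\lesssim_u N^{-s}$ via the a~priori bounds, Bernstein, and the fractional chain rule.

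There is one small gap worth flagging. When you assert that $\||\nabla|^{s}F(u)\|_{L_{t,x}^{10/7}(J_k)}\lesssim_u 1$ ``by the very same fractional chain rule estimate used above (with $s_c$ replaced by $s$)'', the chain rule produces a factor $\||\nabla|^{s}u\|_{L_{t,x}^{10/3}(J_k)}$, and you have not said why this is $\lesssim_u 1$. For $s=s_c$ you got this by interpolating $\||\nabla|^{s_c}u\|_{L_t^2L_x^6(J_k)}\sim_u 1$ (which is exactly Lemma~\ref{lem:sb}) against $\||\nabla|^{s_c}u\|_{L_t^\infty L_x^2}$. But Lemma~\ref{lem:sb} is a statement at regularity $s_c$ only; there is no immediate analogue at level $s$. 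The paper closes this by a short Strichartz bootstrap: one writes
\[
\||\nabla|^{s}u\|_{S(J_k)}\lesssim \|u\|_{L_t^\infty\dot H_x^{s}}+\|u\|_{L_{t,x}^{5p/2}(J_k)}^{p}\,\||\nabla|^{s}u\|_{S(J_k)},
\]
and since $\|u\|_{L_{t,x}^{5p/2}(J_k)}\lesssim_u 1$ on a characteristic subinterval, a standard subdivision argument yields $\||\nabla|^{s}u\|_{S(J_k)}\lesssim_u 1$. With this in hand, all of your remaining steps go through verbatim.
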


\begin{proof} This result will follow from Proposition~\ref{lem:bilinear}, provided we can show
		\begin{equation}
		\label{bounds on Jk}
		\norm{\vert\nabla\vert^{s_c} u}_{S^*(J_k)}\lesssim_u 1
		\end{equation}
and
		\begin{equation}
		\label{gain on Jk}
		\norm{u_{>N}}_{S^*(J_k)}\lesssim_u N^{-s}.
		\end{equation}

For \eqref{bounds on Jk}, we first note that interpolating between \eqref{a priori} and Lemma~\ref{lem:sb} gives $\norm{\vert\nabla\vert^{s_c}u}_{S(J_k)}\lesssim_u 1$. Thus, using the fractional chain rule and Sobolev embedding, we find
	\begin{align*}
	\norm{\vert\nabla\vert^{s_c}u}_{S^*(J_k)}&\lesssim\norm{u}_{L_t^\infty\dot{H}_x^{s_c}(J_k\times\R^3)}+\xnorms{u}{5p/2}{J_k}^p\xnorms{\nsc u}{10/3}{J_k}
	\\ &\lesssim_u 1+\norm{\nsc u}_{S(J_k)}^{p+1}
	\lesssim_u 1.
	\end{align*}

For \eqref{gain on Jk}, we first apply Strichartz and the fractional chain rule to estimate
	\begin{align*}
	\norm{\vert\nabla\vert^s u}_{S(J_k)}&\lesssim \norm{u}_{L_t^\infty\dot{H}_x^s}+\xnorms{u}{5p/2}{J_k}^p\norm{\vert\nabla\vert^s u}_{S(J_k)}.
	\end{align*}
As $\xnorms{u}{5p/2}{J_k}\lesssim_u 1$, a standard bootstrap argument gives 
	$$\norm{\vert\nabla\vert^s u}_{S(J_k)}\lesssim_u 1.$$
Thus, using Bernstein we find
	\begin{align*}
	\norm{u_{>N}}_{S^*(J_k)}&\lesssim N^{-s}\norm{u}_{L_t^\infty\dot{H}_x^{s}(J_k\times\R^3)}+N^{-s}\xnorms{\vert\nabla\vert^s F(u)}{10/7}{J_k}
	\\ &\lesssim_u N^{-s}+N^{-s}\xnorms{u}{5p/2}{J_k}^p\norm{\vert\nabla\vert^s u}_{S(J_k)}
	\lesssim_u N^{-s}.
	\end{align*}
This completes the proof of Corollary~\ref{cor:bilinear}.
\end{proof}


\section{The case $1<s_c<3/2$}\label{section:case one}

In this section, we rule out the existence of solutions as in Theorem~\ref{thm:reduction1} with $1<s_c<3/2$. We treat separately the cases $T_{max}<\infty$ and $T_{max}=\infty$. We show that the existence of solutions with $T_{max}<\infty$ is inconsistent with the conservation of energy. For the case $T_{max}=\infty$, we employ a space-localized Morawetz inequality as in the work of Bourgain on the radial energy-critical NLS \cite{Bou}.

\subsection{Finite time blowup}\label{section:ftbu}
In this section, we preclude the existence of almost periodic solutions as in Theorem~\ref{thm:reduction1} for which $1<s_c<3/2$ and $T_{max}<\infty$.
\begin{theorem}\label{thm:ftbu} There are no almost periodic solutions solutions to \eqref{nls} as in Theorem~\ref{thm:reduction1} with $1<s_c<3/2$ and $T_{max}<\infty$.  
\end{theorem}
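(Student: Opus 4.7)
The plan is to derive a contradiction with the conservation of energy by showing that $u$ must vanish identically. The engine is the blowup of the frequency scale: since $T_{max}<\infty$, Corollary~\ref{cor:constancy} gives $N(t)\to\infty$ as $t\to T_{max}$, and I would exploit this together with the finite length of $[t,T_{max})$ to gain extra regularity and decay for $u$.

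First I would use the reduced Duhamel formula (Proposition~\ref{reduced duhamel}) together with a Strichartz estimate against a dual admissible pair $(q',r')$ chosen so that $(p+1)r'$ equals $\tfrac{3p}{2}$, the Sobolev-critical Lebesgue exponent for $\dot{H}_x^{s_c}$. Sobolev embedding then controls $\norm{F(u)(s)}_{L_x^{r'}}\lesssim_u 1$ uniformly in $s$, and integrating over $[t,T_{max})$ produces
\[
\norm{u(t)}_{L_x^2}\lesssim_u (T_{max}-t)^{s_c/2}.
\]
The associated admissible exponent $q=4p/(p+4)$ is $\geq 2$ precisely because $p>4$ in the regime $s_c>1$, making the above legitimate. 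Interpolating this $L_x^2$ bound against the a priori $\dot{H}_x^{s_c}$ bound then places $u$ in $L_t^\infty H_x^1([0,T_{max})\times\R^3)$ uniformly, and more importantly yields $\norm{u(t)}_{\dot{H}_x^s}\to 0$ as $t\to T_{max}$ for every $s<s_c$.

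Specializing to $s=1$ gives $\norm{\nabla u(t)}_{L_x^2}\to 0$. For the potential-energy term, note that since $s_c>1$ the Sobolev index $\sigma=3p/(2(p+2))$ for $L_x^{p+2}$ satisfies $1<\sigma<s_c$; interpolating between the vanishing $\dot{H}_x^1$ norm and the bounded $\dot{H}_x^{s_c}$ norm gives $\norm{u(t)}_{L_x^{p+2}}\to 0$ as well. Hence $E[u(t)]\to 0$ as $t\to T_{max}$. On the other hand, the uniform $L_t^\infty H_x^1$ regularity permits standard arguments establishing conservation of the energy, so $E[u(0)]=0$; the defocusing sign of the energy then forces $u_0\equiv 0$, contradicting $S_{[0,T_{max})}(u)=\infty$.

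The main obstacle is the opening step: propagating regularity from $\dot{H}_x^{s_c}$ down to $L_x^2$ via the reduced Duhamel formula. This requires matching the Sobolev-critical Lebesgue exponent for $\dot{H}_x^{s_c}$ with the Lebesgue exponent dual to an \emph{admissible} Strichartz pair---a calculation that works only thanks to $s_c>1$---and taking some care with the weak-limit nature of the reduced Duhamel formula when passing $L_x^2$-norms through. Once this descent is performed, the remaining steps (interpolation, $H_x^1$-energy conservation, and defocusing vanishing) are routine.
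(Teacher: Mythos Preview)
Your proposal is correct and follows the same overall skeleton as the paper's proof---reduced Duhamel plus Strichartz to gain regularity below $s_c$, then interpolation to force both pieces of the energy to vanish as $t\to T_{max}$, then conservation of energy to conclude $u\equiv 0$. The difference lies in how the initial regularity gain is achieved.

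The paper works at the $\dot{H}_x^{s_c/2}$ level and splits into low and high frequencies: it estimates $\||\nabla|^{s_c/2}P_N u(t)\|_{L_x^2}$ via the endpoint dual pair $L_t^2L_x^{6/5}$ and Bernstein, obtaining $\|P_{\leq N}|\nabla|^{s_c/2}u(t)\|_{L_x^2}\lesssim_u (T_{max}-t)^{1/2}N^{1-s_c/2}$ for the low frequencies and $\|P_{>N}|\nabla|^{s_c/2}u(t)\|_{L_x^2}\lesssim_u N^{-s_c/2}$ for the high frequencies, then chooses $N$ large and $t$ close to $T_{max}$ to drive the $\dot{H}_x^{s_c/2}$ norm to zero.

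Your route is more direct: by selecting the non-endpoint admissible pair $(q,r)=(\tfrac{4p}{p+4},\tfrac{3p}{p-2})$---admissible precisely when $p\geq 4$, i.e.\ $s_c\geq 1$---you arrange that the dual exponent $r'=\tfrac{3p}{2(p+1)}$ satisfies $(p+1)r'=\tfrac{3p}{2}$, so $\|F(u)\|_{L_x^{r'}}\lesssim\|u\|_{L_x^{3p/2}}^{p+1}$ is controlled by a single application of critical Sobolev embedding. This yields the $L_x^2$ bound $\|u(t)\|_{L_x^2}\lesssim_u (T_{max}-t)^{s_c/2}$ in one stroke, with no Littlewood--Paley splitting. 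Your observation that $1/q'=s_c/2$ is exactly what makes this work. The weak-limit technicality you flag is present in both arguments and is handled the same way. After this point the two proofs are essentially identical.
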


\begin{proof}
Suppose $u$ were such a solution.
%
%
For $N>0$ and $t\in[0,T_{max})$, we use Proposition~\ref{reduced duhamel}, Strichartz, H\"older, and Bernstein to estimate
	\begin{align*}
	\norm{\vert\nabla\vert^{\frac{1}{2}s_c} P_Nu(t)}_{L_x^2}&\lesssim\xnorm{\vert\nabla\vert^{\frac12s_c} P_N\big(F(u)\big)}{2}{6/5}{[t,T_{max})}
	\\&\lesssim (T_{max}-t)^{1/2}N^{1-\frac12s_c}\xonorm{\vert\nabla\vert^{\frac12s_c}F(u)}{\infty}{\frac{12p}{11p+4}}
	\\&\lesssim(T_{max}-t)^{1/2}N^{1-\frac12s_c}\xonorm{u}{\infty}{{3p}/{2}}^p\xonorm{\vert\nabla\vert^{\frac12s_c}u}{\infty}{\frac{12p}{3p+4}}.
	\end{align*}
Using Sobolev embedding and \eqref{a priori}, we deduce
	\begin{equation}\label{low}
	\norm{P_{\leq N}\vert\nabla\vert^{\frac12s_c}u(t)}_{L_x^2}\lesssim_u (T_{max}-t)^{1/2}N^{1-\frac12s_c}
	\end{equation}
for $t\in[0,T_{max})$ and $N>0$. 

For the high frequencies, Bernstein and \eqref{a priori} give
	\begin{equation}\label{high}
	\norm{P_{>N}\vert\nabla\vert^{\frac12s_c}u(t)}_{L_x^2}\lesssim N^{-\frac12s_c}\norm{u}_{L_t^\infty\dot{H}_x^{s_c}}\lesssim_u N^{-\frac12s_c}.
	\end{equation}

We now let $\eta>0$. We first choose $N$ sufficiently large so that $N^{-\frac12s_c}<\eta$, and subsequently choose $t$ close enough to $T_{max}$ that $(T_{max}-t)^{1/2}N^{1-\frac12s_c}<\eta$. Adding \eqref{low} and \eqref{high} then gives
	$$\norm{\vert\nabla\vert^{\frac12s_c}u(t)}_{L_x^2}\lesssim_u\eta.$$
As $\eta$ was arbitrary, we deduce
		$$\norm{\vert\nabla\vert^{\frac12s_c}u(t)}_{L_x^2}\to 0\quad\text{as}\quad t\to T_{max}.$$ 
As $\tfrac12s_c<1<s_c$, we may interpolate this with \eqref{a priori} to deduce that 
		\begin{equation}
		\label{kinetic}
		\norm{\nabla u(t)}_{L_x^2}\to 0\quad\text{as}\quad t\to T_{max}.
		\end{equation} 

We can also use interpolation and Sobolev embedding to estimate
		\begin{align*}
		\norm{u(t)}_{L_x^{p+2}}\lesssim\norm{u(t)}_{L_x^6}^{\frac{2}{p+2}}\norm{u(t)}_{L_x^{{3p}/{2}}}^{\frac{p}{p+2}}
				\lesssim \norm{\nabla u(t)}_{L_x^2}^{\frac{2}{p+2}}\norm{\nsc u(t)}_{L_x^2}^{\frac{p}{p+2}},
		\end{align*}
so that \eqref{kinetic} and \eqref{a priori} imply
		\begin{equation}
		\label{potential}
		\norm{u(t)}_{L_x^{p+2}}\to 0\quad\text{as}\quad t\to T_{max}.
		\end{equation}

Adding \eqref{kinetic} and \eqref{potential}, we deduce that $E[u(t)]\to 0$ as $t\to T_{max}$. By the conservation of energy, we conclude $E[u(t)]\equiv 0$. Thus we must have $u\equiv 0$, which contradicts the fact that $u$ blows up. This completes the proof of Theorem~\ref{thm:ftbu}.
\end{proof}


\subsection{Morawetz \`a la Bourgain}\label{section:mor}
	In this section, we preclude the existence of almost periodic solutions as in Theorem~\ref{thm:reduction1} for which $1<s_c<3/2$ and $T_{max}=\infty$. We employ a space-localized version of the Lin--Strauss Morawetz inequality as in the work of Bourgain \cite{Bou} on the radial energy-critical NLS. See also \cite{Gri, Tao}. 

\begin{proposition}[Space-localized Morawetz]\label{prop:mor}
Let $1<s_c<3/2$ and $u:I\times\R^3\to\C$ be a solution to \eqref{nls}. Then for $C\geq1$, we have
	\begin{align}\nonumber
	\int_I\int_{\vert x\vert \leq C\vert I\vert^{1/2}}&\frac{\vert u(t,x)\vert^{p+2}}{\vert x\vert}\,dx\,dt
	\\ &\lesssim (C\vert I\vert^{1/2})^{2s_c-1}
	\bigg\{\norm{u}_{L_t^\infty\dot{H}_x^{s_c}(I\times\R^3)}^2+\norm{u}_{L_t^\infty\dot{H}_x^{s_c}(I\times\R^3)}^{p+2}\bigg\}.\label{eq:mor}
	\end{align}
\end{proposition}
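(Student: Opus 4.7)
The plan is to adapt the classical Lin--Strauss Morawetz argument using a spatially-truncated weight, in the spirit of Bourgain's approach to the radial energy-critical NLS. Set $R:=C|I|^{1/2}$ and choose a smooth radial weight $a(x):=R\phi(|x|/R)$ with $\phi(r)=r$ for $r\leq 1$, $\phi'\geq 0$, and $\phi'(r)\equiv 0$ for $r\geq 2$. Then $|\nabla a|\leq 1$ everywhere; inside $\{|x|\leq R\}$ the weight coincides with $|x|$, so $\Delta a=2/|x|$, the Hessian $a_{jk}$ is positive semidefinite, and $-\Delta^2 a=8\pi\delta_0$; in the transition annulus $\{R\leq|x|\leq 2R\}$ all the relevant coefficients are controlled, with $|a_{jk}|,|\Delta a|\lesssim 1/R$ and $|\Delta^2 a|\lesssim 1/R^3$; outside $\{|x|>2R\}$ everything vanishes.

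Define the Morawetz action $M(t):=2\Im\int \bar u\,\nabla a\cdot\nabla u\,dx$, differentiate using the NLS equation to obtain the standard identity
$$\partial_t M(t)=4\smallint a_{jk}\,\Re(\bar u_j u_k)\,dx+\tfrac{2p}{p+2}\smallint(\Delta a)|u|^{p+2}\,dx-\smallint(\Delta^2 a)|u|^2\,dx,$$
and split each term into its contribution from $\{|x|\leq R\}$ and from the annulus. The contributions from $\{|x|\leq R\}$ are nonnegative and include the desired term $\tfrac{4p}{p+2}\int_{|x|\leq R}\tfrac{|u|^{p+2}}{|x|}\,dx$. Moving the annulus contributions to the right-hand side and applying the fundamental theorem of calculus over $I$ yields
$$\smallint_I\smallint_{|x|\leq R}\tfrac{|u|^{p+2}}{|x|}\,dx\,dt\lesssim\sup_{t\in I}|M(t)|+\smallint_I\mathcal{E}(t)\,dt,$$
where $\mathcal{E}(t)$ collects the three annulus errors, of respective sizes $R^{-1}\int_{R\leq|x|\leq 2R}|\nabla u|^2$, $R^{-1}\int_{R\leq|x|\leq 2R}|u|^{p+2}$, and $R^{-3}\int_{R\leq|x|\leq 2R}|u|^2$.

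To control each piece in terms of $\|u\|_{\dot H_x^{s_c}}$, the key tool is a pair of weighted Hardy inequalities that are simultaneously available in the range $1<s_c<3/2$: namely $\int|x|^{-2s_c}|u|^2\lesssim\|u\|_{\dot H_x^{s_c}}^2$ (which requires $s_c<3/2$) and $\int|x|^{-2(s_c-1)}|\nabla u|^2\lesssim\|u\|_{\dot H_x^{s_c}}^2$ (which requires $s_c>1$). Trading the pointwise weight for a power of $R$ on $\{|x|\leq 2R\}$ yields the localized bounds $\|u\|_{L_x^2(|x|\leq 2R)}\lesssim R^{s_c}\|u\|_{\dot H_x^{s_c}}$ and $\|\nabla u\|_{L_x^2(|x|\leq 2R)}\lesssim R^{s_c-1}\|u\|_{\dot H_x^{s_c}}$, so Cauchy--Schwarz together with $|\nabla a|\leq 1$ gives $\sup_t|M(t)|\lesssim R^{2s_c-1}\|u\|_{L_t^\infty\dot H_x^{s_c}}^2$. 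The same two bounds, combined with H\"older and the Sobolev embedding $\dot H_x^{s_c}\hookrightarrow L_x^{3p/2}$ (one verifies that $p+2<3p/2$ precisely when $s_c>1$, so the annulus $L_x^{p+2}$-integral can be bought from $L_x^{3p/2}$ with a favorable power of $R$), bound each annulus term in $\mathcal{E}(t)$ pointwise by $R^{2s_c-3}(\|u\|_{\dot H_x^{s_c}}^2+\|u\|_{\dot H_x^{s_c}}^{p+2})$. Integrating in $t$ and using $|I|=R^2/C^2\leq R^2$ converts the factor $R^{-2}$ into $|I|$, producing exactly $R^{2s_c-1}(\|u\|_{\dot H_x^{s_c}}^2+\|u\|_{\dot H_x^{s_c}}^{p+2})$, which with $R=C|I|^{1/2}$ is the right-hand side of \eqref{eq:mor}.

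The main obstacle is that $u$ is only assumed to lie in $\dot H_x^{s_c}$ with $s_c>1$, so neither $u\in L_x^2$ nor $\nabla u\in L_x^2$ globally and the usual commutator bound $|M(t)|\lesssim\|u\|_{\dot H_x^{1/2}}^2$ is unavailable. The Hardy inequalities circumvent this precisely because the spatial localization $|x|\leq 2R$ turns the forbidden global $L_x^2$-norms into the admissible localized quantities $R^{s_c}\|u\|_{\dot H_x^{s_c}}$ and $R^{s_c-1}\|u\|_{\dot H_x^{s_c}}$, which together produce the sharp scaling $R^{2s_c-1}$; it is the interplay between these two Hardy estimates that forces the restriction $1<s_c<3/2$.
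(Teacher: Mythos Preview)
Your proof is correct and follows essentially the same approach as the paper: both localize the Lin--Strauss Morawetz action with a compactly supported radial weight, discard the nonnegative contributions from the inner region, and estimate the Morawetz action together with the annulus error terms in terms of $\|u\|_{L_t^\infty\dot H_x^{s_c}}$. The only technical difference is in how those errors are bounded: the paper first rescales to $R=1$ and then applies H\"older together with the Sobolev embeddings $\dot H_x^{s_c}\hookrightarrow L_x^{3p/2}$ and $\dot H_x^{s_c}\hookrightarrow \dot W_x^{1,\frac{3p}{p+2}}$, whereas you invoke the weighted Hardy inequalities $\int|x|^{-2s_c}|u|^2\,dx\lesssim\|u\|_{\dot H_x^{s_c}}^2$ and $\int|x|^{-2(s_c-1)}|\nabla u|^2\,dx\lesssim\|u\|_{\dot H_x^{s_c}}^2$; both routes yield the same scaling and the same restriction $1<s_c<3/2$, and your formulation makes that restriction especially transparent.
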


\begin{proof} We follow the presentation in \cite{Tao}. Using the scaling symmetry \eqref{eq:scaling}, we may assume $C\vert I\vert^{1/2}=1$. We define the Morawetz action
	$$\text{Mor}(t):=2\,\Im\int_{\R^3}\nabla a(x)\cdot\nabla u(t,x)\bar{u}(t,x)\,dx,$$
where $a(x)=(\eps^2+\vert x\vert^2)^{1/2}\chi(x),$ $0<\eps\ll 1$, and $\chi$ is a bump function supported on $\{\vert x\vert\leq 2\}$ and equal to one on $\{\vert x\vert\leq 1\}$. 

A standard computation shows that for $u$ solving \eqref{nls}, we have
	\begin{align}
	\partial_t \text{Mor}(t)
	&\geq c\bigg[\int_{\R^3} \Delta a(x)\vert u(t,x)\vert^{p+2}\,dx	\label{main}
	\\ &\quad\quad+\int_{\R^3}-\Delta\Delta a(x)\vert u(t,x)\vert^2\,dx	\label{lot}
	\\ &\quad\quad+\Re\int_{\R^3}\nabla u(t,x)\cdot\nabla^2 a(x)\nabla\bar{u}(t,x)\,dx\bigg]. \label{convex}
	\end{align}

For $\vert x\vert\leq 1$, we have $\Delta a(x)\gtrsim (\eps^2+\vert x\vert^2)^{-1/2}$, while the contributions of $\eqref{lot}$ and $\eqref{convex}$ are nonnegative.  For $1< \vert x\vert\leq 2$, the weight $a$ and its derivatives are bounded uniformly in $\eps$.  Thus, applying the fundamental of calculus, we are led to

	\begin{align}
	\int_I\int_{\vert x\vert\leq 1} \frac{\vert u(t,x)\vert^{p+2}}{(\eps^2+\vert x\vert^2)^{1/2}}\,dx\,dt
	&\lesssim
	\norm{\text{Mor}}_{L_t^\infty(I)} \label{bound M}
	\\ &\quad+
	\int_I\int_{1< \vert x\vert\leq 2}\vert u(t,x)\vert^{p+2}\,dx\,dt\label{bound potential}
	\\ &\quad+
	\int_I\int_{1<\vert x\vert\leq 2}\vert u(t,x)\vert^2\,dx\,dt \label{bound mass}
	\\ &\quad+
	\int_I\int_{1<\vert x\vert\leq 2}\vert \nabla u(t,x)\vert^2\,dx\,dt \label{bound kinetic}
	\end{align}
We now show that \eqref{bound M} through \eqref{bound kinetic} can be controlled by $\norm{u}_{L_t^\infty\dot{H}_x^{s_c}}^2$ or $\norm{u}_{L_t^\infty\dot{H}_x^{s_c}}^{p+2},$ so that sending $\eps\to 0$ we can recover \eqref{eq:mor}. Using H\"older, Sobolev embedding, and the fact that $\vert I\vert=C^{-2}\leq 1$, we estimate	
	\begin{align*}
	&\eqref{bound M}\lesssim\norm{\nabla a}_{L_x^{\frac{3p}{2(p-2)}}(\R^3)}\xnorm{\nabla u}{\infty}{\frac{3p}{p+2}}{I}\xnorm{u}{\infty}{{3p}/{2}}{I} 
	\lesssim \norm{u}_{L_t^\infty\dot{H}_x^{s_c}(I\times\R^3)}^2,
\\	&\eqref{bound potential} \lesssim\vert I\vert\,\norm{1}_{L_x^{\frac{3p}{p-4}}(\vert x\vert\sim 1)}\xnorm{u}{\infty}{3p/2}{I}^{p+2}
	\lesssim \norm{u}_{L_t^\infty\dot{H}_x^{s_c}(I\times\R^3)}^{p+2},
\\	&\eqref{bound mass}\lesssim\vert I\vert\,\norm{1}_{L_x^{\frac{3p}{3p-4}}(\vert x\vert\sim 1)}\xnorm{u}{\infty}{3p/2}{I}^2 \lesssim \norm{u}_{L_t^\infty\dot{H}_x^{s_c}(I\times\R^3)}^2,
\\	&\eqref{bound kinetic}\lesssim \vert I\vert\,\norm{1}_{L_x^{\frac{3p}{p-4}}(\vert x\vert\sim 1)}\xnorm{\nabla u}{\infty}{\frac{3p}{p+2}}{I}^2\lesssim \norm{u}_{L_t^\infty\dot{H}_x^{s_c}(I\times\R^3)}^2.
	\end{align*}	
This completes the proof of Proposition~\ref{prop:mor}.
\end{proof}

We now use Proposition~\ref{prop:mor} and Lemma~\ref{lower bounds} to prove the following.

\begin{theorem}\label{infinite time} There are no almost periodic solutions $u$ to \eqref{nls} as in Theorem~\ref{thm:reduction1} with $1<s_c<3/2$ and $T_{max}=\infty$. 
\end{theorem}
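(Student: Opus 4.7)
The plan is to combine the space-localized Morawetz estimate of Proposition~\ref{prop:mor} with the compactness lower bound \eqref{lb1} from Lemma~\ref{lower bounds} to produce a spacetime integral inequality that is violated for large time intervals, since $N(t)$ is bounded below by $1$ and $3-2s_c>0$.

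First, fix a compact interval $I\subset[0,\infty)$ whose length $|I|$ will eventually be taken large. Apply Proposition~\ref{prop:mor} with the constant $C=1$; recalling $\norm{u}_{L_t^\infty\dot{H}_x^{s_c}}\lesssim_u 1$, this yields
\begin{equation*}
\int_I\int_{|x|\leq |I|^{1/2}}\frac{|u(t,x)|^{p+2}}{|x|}\,dx\,dt\lesssim_u |I|^{s_c-1/2}.
\end{equation*}
Once $|I|^{1/2}\geq C(u)$, the ball $\{|x|\leq C(u)/N(t)\}$ is contained in $\{|x|\leq |I|^{1/2}\}$ because $N(t)\geq 1$, so the integrand on $\{|x|\leq C(u)/N(t)\}$ contributes in full.

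Next, I would produce a pointwise-in-$t$ lower bound on the inner integral. On the ball of radius $C(u)/N(t)$ one has $|x|^{-1}\gtrsim_u N(t)$, and H\"older's inequality gives
\begin{equation*}
\int_{|x|\leq C(u)/N(t)}|u|^{p+2}\,dx\gtrsim_u N(t)^{3p/2}\bigl(\smallint_{|x|\leq C(u)/N(t)}|u|^2\,dx\bigr)^{(p+2)/2},
\end{equation*}
by writing $|u|^2=|u|^2\cdot 1$ and pairing $L^{(p+2)/2}$ with $L^{(p+2)/p}$ against the volume $\sim (C(u)/N(t))^3$. Inserting the lower bound \eqref{lb1} from Lemma~\ref{lower bounds}, namely $\int_{|x|\leq C(u)/N(t)}|u|^2\,dx\gtrsim_u N(t)^{-2s_c}$, and combining with the extra factor of $N(t)$ from $|x|^{-1}$, a direct computation using $s_c=3/2-2/p$ gives
\begin{equation*}
\int_{|x|\leq C(u)/N(t)}\frac{|u|^{p+2}}{|x|}\,dx\gtrsim_u N(t)^{\,3-2s_c}\qquad\text{for every }t\in I.
\end{equation*}

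Integrating this lower bound over $I$ and comparing with Proposition~\ref{prop:mor} yields
\begin{equation*}
\int_I N(t)^{\,3-2s_c}\,dt\lesssim_u |I|^{s_c-1/2}.
\end{equation*}
Since $N(t)\geq 1$ and $3-2s_c>0$, the left-hand side is at least $|I|$, so $|I|^{3/2-s_c}\lesssim_u 1$. Because $3/2-s_c>0$ and $T_{max}=\infty$ lets us choose $|I|$ arbitrarily large, this is a contradiction, ruling out such solutions.

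The main obstacle is keeping the error bookkeeping honest in the Holder-plus-Lemma~\ref{lower bounds} step: the spatial truncation radius in Proposition~\ref{prop:mor} is dictated by $|I|^{1/2}$, while the compactness radius is $C(u)/N(t)$, and one must verify that the lower bound extracted from the latter actually lives inside the former. Using $\inf_{t\in I}N(t)\geq 1$ together with the freedom to take $|I|$ large makes this compatible; everything else is bookkeeping with the identity $3-2s_c=4/p$.
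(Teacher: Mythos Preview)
Your argument is correct and follows essentially the same route as the paper: apply the space-localized Morawetz on a large interval $I$, then use the compactness lower bound \eqref{lb1} together with H\"older on the ball $\{|x|\leq C(u)/N(t)\}$ to extract $\int_I N(t)^{3-2s_c}\,dt\lesssim_u |I|^{s_c-1/2}$, which contradicts $N(t)\geq 1$ for $|I|$ large. The only cosmetic difference is that the paper organizes the lower bound through the characteristic subintervals $J_k$ (on which $N(t)\equiv N_k$ and $|J_k|^{1/2}\sim_u N_k^{-1}$) and applies Proposition~\ref{prop:mor} with $C=C(u)$, whereas you apply it once on $I$ with $C=1$ and handle the inclusion $\{|x|\leq C(u)/N(t)\}\subset\{|x|\leq |I|^{1/2}\}$ by taking $|I|$ large; the substance is identical.
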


\begin{proof} Suppose that $u$ were such a solution. In particular, $u$ is nonzero and $x(t)\equiv 0$, so that Lemma~\ref{lower bounds} applies. 

We let $I\subset[0,\infty)$ be a compact time interval, which is a contiguous union of characteristic subintervals $J_k$. Choosing $C(u)$ sufficiently large, we use \eqref{a priori}, Proposition~\ref{prop:mor}, H\"older, and \eqref{lb1} to estimate
	\begin{align*}
	\vert I\vert^{s_c-1/2}&
	\gtrsim_u \sum_{J_k\subset I}\int_{J_k}\int_{\vert x\vert\leq C(u)\vert J_k\vert^{1/2}}\frac{\vert u(t,x)\vert^{p+2}}{\vert x\vert}\,dx\,dt
	\\ &\gtrsim_u \sum_{J_k\subset I} \int_{J_k}N_k\int_{\vert x\vert\leq\frac{C(u)}{N_k}}\vert u(t,x)\vert^{p+2}\,dx\,dt
	\\ &\gtrsim_u \sum_{J_k\subset I} \int_{J_k}N_k^{1+\frac{3p}{2}}\bigg(\int_{\vert x\vert\leq\frac{C(u)}{N(t)}}\vert u(t,x)\vert^2\,dx\bigg)^{\frac{p+2}{2}}\,dt
	\\ &\gtrsim_u \sum_{J_k\subset I} \int_{J_k}N(t)^{3+ps_c}(N(t)^{-2s_c})^{\frac{p+2}{2}}\,dt
	\\ &\gtrsim_u \int_I N(t)^{3-2s_c}\,dt.
	\end{align*}
Recalling that $s_c<3/2$ and $\inf N(t)\geq 1$, we now reach a contradiction by taking $I$ sufficiently large inside of $[0,\infty)$. This completes the proof of Theorem~\ref{infinite time}.\end{proof}

			\section{The case $3/4<s_c<1$}\label{section:case two}
In this section, we rule out the existence of solutions as in Theorem~\ref{thm:reduction1} with $3/4<s_c<1$. The main technical tool we use is a long-time Strichartz estimate, Proposition~\ref{prop:LTS2}, which we prove in the next subsection. In Section~\ref{section:rfc2} we rule out the rapid frequency-cascade scenario by showing that the existence such solutions is inconsistent with the conservation of mass. In Section~\ref{section:flmor2} we prove a frequency-localized Lin--Strauss Morawetz inequality, which we use in Section~\ref{section:soliton2} to rule out the quasi-soliton scenario.

The arguments in this section are equally valid in the range $1/2<s_c\leq 3/4$; however, these cases have already been addressed in the non-radial setting \cite{Mur:swamprat}.
			
\subsection{Long-time Strichartz estimates}\label{section:lts2}
In this section, we prove a long-time Strichartz estimate adapted to the Lin--Strauss Morawetz inequality. We will work under the assumption
		\begin{equation}
		\label{eq:inputgain}
		u\in L_t^\infty\dot{H}_x^{s}([0,T_{max})\times\R^3)
		\end{equation}
for some $s\leq s_c$. We have from \eqref{a priori} that \eqref{eq:inputgain} holds for $s=s_c$. In Section~\ref{section:rfc2} we will show that rapid frequency-cascade solutions actually satisfy \eqref{eq:inputgain} for $s<s_c$. 

Throughout Section~\ref{section:case two}, we use the following notation:

		\begin{equation}
		A_I:=\xnorm{\nsc u_{\leq N}}{2}{6}{I},
		\label{def:A2}
		\end{equation}

		\begin{equation}
		K_I:=\int_I N(t)^{3-2s_c}\,dt\sim_u \sum_{J_k\subset I} N_k^{1-2s_c}.
		\label{def:K2}
		\end{equation}

The main result of this section is the following.

\begin{proposition}[Long-time Strichartz estimate]\label{prop:LTS2}
Let $u:[0,T_{max})\times\R^3\to\C$ be an almost periodic solution as in Theorem~\ref{thm:reduction1} with $3/4<s_c<1$. Let $I\subset[0,T_{max})$ be a compact time interval, which is a contiguous union of characteristic subintervals $J_k$. Suppose \eqref{eq:inputgain} holds for some $s_c-1/2<s\leq s_c$. Then for any $N>0$, we have
	\begin{equation}
	\label{eq:LTS2}
	A_I(N)\lesssim_u 1+N^{\sigma(s)}K_I^{1/2},
	\end{equation}
where $\sigma(s):=2s_c-s-1/2.$

In particular, using \eqref{a priori}, we have
	\begin{equation}
	\label{eq:LTS2sc}
	A_I(N)\lesssim_u 1+N^{s_c-1/2}K_I^{1/2}.
	\end{equation}
Moreover, for any $\eps>0$, there exists $N_0=N_0(\eps)>0$ so that for any $N\leq N_0$, 
	\begin{equation}
	\label{eq:LTS2small}
	A_I(N)\lesssim_u \eps(1+N^{s_c-1/2}K_I^{1/2}).
	\end{equation}
\end{proposition}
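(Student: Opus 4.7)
The plan is to apply Strichartz estimates to the frequency-localized equation
\[
(i\partial_t+\Delta)\nsc u_{\leq N}=\nsc P_{\leq N}F(u)
\]
and then carefully control the resulting nonlinear spacetime norm by exploiting the bilinear Strichartz estimate of Corollary~\ref{cor:bilinear} on each characteristic subinterval $J_k$. Using the admissible pair $(2,6)$, Proposition~\ref{lem:strichartz} with base point $t_0=\inf I$, combined with \eqref{a priori}, reduces the proof of \eqref{eq:LTS2} to establishing
\[
\|\nsc P_{\leq N}F(u)\|_{L_t^2 L_x^{6/5}(I)}\lesssim_u 1+N^{\sigma(s)}K_I^{1/2}.
\]

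For the nonlinear estimate I would split $u=u_{\leq N/8}+u_{>N/8}$ and use the pointwise bound
\[
|F(u)-F(u_{\leq N/8})|\lesssim|u_{>N/8}|\bigl(|u_{\leq N/8}|^p+|u_{>N/8}|^p\bigr)
\]
to decompose the nonlinearity into a \emph{low-frequency piece} $\nsc P_{\leq N}F(u_{\leq N/8})$ and a \emph{high-frequency remainder}. The low-frequency piece will be controlled via the fractional chain rule by a bound of the form $\lesssim\|u_{\leq N/8}\|_{L_t^\infty L_x^{3p/2}}^p A_I(N/8)$; almost periodicity makes $\|u_{\leq N/8}\|_{L_t^\infty L_x^{3p/2}}$ arbitrarily small for $N$ below a $u$-dependent threshold, permitting absorption via a bootstrap in $N$ (with the remaining range handled directly from Lemma~\ref{lem:sb}). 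The high-frequency remainder will be estimated on each $J_k$ by first applying Bernstein to replace $\nsc P_{\leq N}$ by a prefactor $N^{s_c}$ and then distributing the $p+1$ factors via H\"older so that exactly one bilinear pairing $\|u_{\leq cN}\,u_{>N/8}\|_{L_{t,x}^2(J_k)}$ appears; Corollary~\ref{cor:bilinear} then provides the gain $\lesssim_u N^{1-s_c}N^{-1/2-s}$, while the remaining factors of $u$ are placed in mixed Lebesgue norms controlled via almost periodicity, Sobolev embedding, Bernstein, and the $|J_k|\sim_u N_k^{-2}$ scaling.

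Upon squaring the per-$J_k$ estimates and summing over $J_k\subset I$, the various powers of $N_k$ collected from the characteristic scaling of each interval combine to yield $\sum_{J_k\subset I}N_k^{1-2s_c}\sim_u K_I$, producing the claimed factor of $N^{2\sigma(s)}K_I$ with $\sigma(s)=2s_c-s-1/2$. The specialization \eqref{eq:LTS2sc} is then immediate by taking $s=s_c$ and invoking \eqref{a priori}. For the smallness bound \eqref{eq:LTS2small}, I would rerun the argument noting that almost periodicity allows one to replace the ``$1$'' by $\eps$ in both the Strichartz initial-data term $\|P_{\leq N}u(\inf I)\|_{\dot H^{s_c}}$ and the coefficient in front of $A_I(N/8)$ in the low-frequency bootstrap step, provided $N\leq N_0(\eps)$. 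I expect the main technical obstacle to be the H\"older bookkeeping in the high-frequency estimate for general non-integer $p$: since the Sobolev exponent $\tfrac{3}{2}-\tfrac{1}{p-1}$ associated to $L^{3(p-1)}$ does not coincide with $s_c$, distributing the $p-1$ ``spectator'' factors of $u$ requires interpolating between the $\dot H^s$ and $\dot H^{s_c}$ bounds together with the $|J_k|^{1/q}$ temporal factors in order to reproduce both the $N$-exponent $\sigma(s)$ and the correct $N_k^{1-2s_c}$ weight per characteristic subinterval.
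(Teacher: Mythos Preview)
Your overall scaffolding (Strichartz plus a bootstrap on the low-frequency piece) is sound, but the plan to extract the $K_I^{1/2}$ factor from the high-frequency remainder via Corollary~\ref{cor:bilinear} does not work in this regime. The bilinear Strichartz estimate is the engine for the \emph{dual} case $0<s_c<1/2$ treated in Section~\ref{section:case three}: there the solution concentrates at low frequencies $N_k\leq 1$, one tracks $u_{>N}$, and the pairing $u_{\leq C(\eta)N_k}\,u_{>\eta N}$ involves genuinely separated scales, producing the factor $N_k^{1-s_c}$ that ultimately sums to $K_I$. Here the solution lives at high frequencies $N_k\geq 1$ and you are tracking $u_{\leq N}$ for $N$ small. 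Your proposed pairing $\|u_{\leq cN}u_{>N/8}\|_{L^2_{t,x}}$ has both cutoffs at the \emph{same} scale $N$; Corollary~\ref{cor:bilinear} then returns $N^{1-s_c}\cdot N^{-1/2-s}$ with no $N_k$-dependence at all, and after your Bernstein prefactor $N^{s_c}$ the net power is $N^{1/2-s}$, worse than the target $N^{\sigma(s)}=N^{2s_c-s-1/2}$ by a factor $N^{1-2s_c}$ that diverges as $N\to 0$. There is also a structural gap: to produce a factor $u_{\leq cN}$ inside the remainder $F(u)-F(u_{\leq N/8})=\text{\O}(u_{>N/8}u^p)$ you must further decompose $u^p$, and the resulting all-high piece $u_{>N/8}(u_{>cN})^p$ carries no low-frequency factor to pair with. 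Since the bilinear term already sits in $L^2_t$, there is no room for the ``$|J_k|^{1/q}$ temporal factors'' you invoke to manufacture the missing $N_k$-powers.

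The paper obtains the $K_I$ contribution from the \emph{low-frequency} piece, not the remainder. After the fractional chain rule on $F(u_{\leq N/\eta})$, one splits the $L^\infty_tL^{3p/2}_x$ factor at the characteristic frequency: $P_{\leq c(\eta)N_k}u_{\leq N/\eta}$ is small by almost periodicity (this is your bootstrap term), while $P_{>c(\eta)N_k}u_{\leq N/\eta}$ is nonzero only when $c(\eta)N_k<N/\eta$, which allows one to insert the free factor $(N/N_k)^{s_c-1/2}$; combined with $\|\nsc u\|_{L^2_tL^6_x(J_k)}\lesssim_u 1$ from Lemma~\ref{lem:sb} and a Bernstein gain of $N^{s_c-s}$ from the $\dot H^s$ hypothesis, summing over $J_k$ yields exactly $N^{\sigma(s)}K_I^{1/2}$. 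The remainder $F(u)-F(u_{\leq N/\eta})$ is handled by the crude bound $N^{s_c}\|u\|_{L^\infty_tL^{3p/2}_x}^p\sum_{M>N/\eta}\|u_M\|_{L^2_tL^6_x}$, giving a tail sum $\sum_{M>N/\eta}(N/M)^{s_c}A_I(M)$ that is closed by \emph{downward} induction in $N$, with base case at $N>\sup_{t\in I}N(t)$ supplied directly by Lemma~\ref{lem:sb}.
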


We prove Proposition~\ref{prop:LTS2} by induction. The inductive step relies on the following. 

\begin{lemma}\label{lem:recurrence2}
Let $\eta>0$ and $u,$ $I,$ $s$, $\sigma$ be as above. For any $N>0$, we have
	\begin{align*}
	\xnorm{\nsc P_{\leq N}\big(F(u)\big)}{2}{6}{I}
	&\lesssim_u C_\eta\sup_{J_k\subset I}\norm{u_{\leq N/\eta}}_{L_t^\infty\dot{H}_x^{s}(J_k\times\R^3)}
	N^{\sigma(s)}K_I^{1/2}
	\\ &\quad\quad +\sum_{M\geq N/\eta}\big(\tfrac{M}{N}\big)^{s_c}A_I(M).
	\end{align*}
\end{lemma}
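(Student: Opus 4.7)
\textbf{Proof plan for Lemma~\ref{lem:recurrence2}.} The plan is to split the argument of $F$ into low- and high-frequency parts via the threshold $N/\eta$, control each contribution separately, and reassemble the pieces over the characteristic subintervals. Write $u=u_{\leq N/\eta}+u_{>N/\eta}$ and use the pointwise identity
\begin{equation*}
F(u)=F(u_{\leq N/\eta})+\mathcal{O}\bigl(u_{>N/\eta}\,\bigl(|u_{\leq N/\eta}|^{p}+|u_{>N/\eta}|^{p}\bigr)\bigr).
\end{equation*}
This identifies a ``low-frequency'' contribution $\nsc P_{\leq N}F(u_{\leq N/\eta})$, which should yield the first term on the right-hand side of the lemma, and a ``high-frequency'' contribution, which should yield the sum $\sum_{M\geq N/\eta}(M/N)^{s_c}A_I(M)$.

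For the low-frequency contribution, I would use Bernstein to move $\nsc$ past $P_{\leq N}$, producing a factor $N^{s_c}$, and then apply H\"older in $x$ to split $\|F(u_{\leq N/\eta})\|_{L_x^{6/5}}$ as a product of $p$ factors in $L_x^{3p/2}$ and one factor in $L_x^{6}$. The critical Sobolev embedding $\dot H_x^{s_c}\hookrightarrow L_x^{3p/2}$ together with the a priori bound \eqref{a priori} bounds the $p$ critical factors by a constant $\lesssim_u 1$. For the remaining $L_x^6$ factor, use $\dot H_x^1\hookrightarrow L_x^6$ and the Bernstein inequality on the frequency-restricted function $u_{\leq N/\eta}$ to obtain $\|u_{\leq N/\eta}\|_{L_x^6}\lesssim (N/\eta)^{1-s}\,\||\nabla|^s u_{\leq N/\eta}\|_{L_x^2}$, producing the factor $\eta^{s-1}$ (absorbed into $C_\eta$) and the $\dot H_x^s$ norm. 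At this point the pointwise-in-$t$ bound carries a prefactor $N^{s_c}(N/\eta)^{1-s}$, which after combining exponents with the $|J_k|$-dependence below will produce the announced $N^{\sigma(s)}=N^{2s_c-s-1/2}$.

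To pass from a pointwise-in-$t$ bound to the $L_t^2$ norm on $I$, decompose $I=\bigcup J_k$ and use the orthogonality $\|\cdot\|_{L_t^2(I)}^2=\sum_k\|\cdot\|_{L_t^2(J_k)}^2$. On each $J_k$, combine the above pointwise bound with the bilinear Strichartz gain of Corollary~\ref{cor:bilinear} (and $|J_k|\sim_u N_k^{-2}$) to produce the weight $N_k^{1/2-s_c}$, so that after squaring and summing over $J_k\subset I$ one obtains $\sum_k N_k^{1-2s_c}\sim_u K_I$, yielding the required $K_I^{1/2}$. The supremum $\sup_{J_k}\|u_{\leq N/\eta}\|_{\dot H_x^s(J_k)}$ appears because we must extract a single $\dot H_x^s$ factor uniformly in $k$ before summing.

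For the high-frequency contribution, further decompose $u_{>N/\eta}=\sum_{M>N/\eta}u_M$, and for each $M$ place $u_M$ at the $L_x^6$ position while keeping the other $p$ factors of $u$ at the critical $L_x^{3p/2}$ level. Bernstein on the shell piece $u_M$ together with the factor $N^{s_c}$ from the outer $P_{\leq N}$ yields the ratio $(M/N)^{s_c}$ when we translate $\|u_M\|_{L_t^2 L_x^6}$ back to the quantity $A_I(M)$, giving the second term in the stated bound. The main obstacle is Step~3: a naive H\"older bound $\|\cdot\|_{L_t^2(J_k)}\leq|J_k|^{1/2}\sup_{J_k}|\cdot|$ produces $|J_k|^{1/2}\sim N_k^{-1}$, whose square sums to $|I|$ rather than the required $K_I$. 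Avoiding this loss requires exploiting almost periodicity via the bilinear Strichartz estimate to convert a factor of $u$ from $L_t^\infty$ into an $L_t^2$-based norm with the correct $N_k$-scaling.
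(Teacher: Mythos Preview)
Your decomposition $F(u)=F(u_{\leq N/\eta})+[F(u)-F(u_{\leq N/\eta})]$ and the handling of the second piece match the paper. The gap is in the low-frequency piece $F(u_{\leq N/\eta})$: the bilinear Strichartz remedy you propose cannot work here, because Corollary~\ref{cor:bilinear} estimates products $u_{\leq M}u_{>N'}$ containing a genuine high-frequency factor, whereas $F(u_{\leq N/\eta})$ is built entirely from factors truncated below $N/\eta$. There is nothing to pair against, so no bilinear gain is available. (Bilinear Strichartz \emph{does} enter the analogous Lemma~\ref{lem:recurrence} in the regime $0<s_c<1/2$, where the relevant piece carries a factor $u_{>\eta N}$; the present situation is structurally different.) Your preliminary Bernstein step, placing the last factor in $L_t^\infty L_x^6$, also produces the wrong $N$-exponent: you obtain $N^{s_c}(N/\eta)^{1-s}$, which exceeds the target $N^{\sigma(s)}=N^{2s_c-s-1/2}$ by $N^{3/2-s_c}$.

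The paper never passes the last factor through $L_t^\infty$. It applies the fractional chain rule directly, keeping one copy of $\nsc u_{\leq N/\eta}$ in $L_t^2L_x^6(J_k)$ and placing the remaining $p$ copies in $L_t^\infty L_x^{3p/2}$. Those $p$ copies are then split at frequency $c(\eta)N_k$ using almost periodicity \eqref{Ceta22}: the portion below $c(\eta)N_k$ contributes $\eta^{s_c}A_I(N/\eta)$, which is absorbed as the $M=N/\eta$ term of the sum. The portion above $c(\eta)N_k$ is nonzero only when $c(\eta)N_k<N/\eta$, i.e.\ $N_k\lesssim_\eta N$; one then inserts the trivial factor $1\lesssim C_\eta(N/N_k)^{s_c-1/2}$ and invokes Lemma~\ref{lem:sb} to bound $\|\nsc u\|_{L_t^2L_x^6(J_k)}\lesssim_u 1$. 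This is the source of the weight $N_k^{1/2-s_c}$ whose squares sum to $K_I$---not bilinear Strichartz. A final Bernstein step $\|\nsc u_{\leq N/\eta}\|_{L_t^\infty L_x^2}\lesssim C_\eta N^{s_c-s}\|u_{\leq N/\eta}\|_{\dot{H}_x^s}$ then delivers the correct power $N^{(s_c-1/2)+(s_c-s)}=N^{\sigma(s)}$.
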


\begin{proof}
Throughout the proof, all spacetime norms are taken over $I\times\R^3$ unless indicated otherwise.

We fix $0<\eta<1$. Using almost periodicity, we may choose $c(\eta)$ sufficiently small so that
		\begin{equation}
		\label{Ceta22}
		\xnorm{\nsc u_{\leq c(\eta)N(t)}}{\infty}{2}{I}<\eta.
		\end{equation}

We decompose the nonlinearity as follows:
				$$F(u)=F(u_{\leq N/\eta})+[F(u)-F(u_{\leq N/\eta})].$$
				
We first restrict our attention to an individual characteristic subinterval $J_k$. Using the fractional chain rule, H\"older, the triangle inequality, and Sobolev embedding, we estimate
	\begin{align}
	\xnorm{\nsc &P_{\leq N} F(u_{\leq N/\eta})}{2}{6/5}{J_k}\nonumber
	\\ &\lesssim\xnorm{u_{\leq N/\eta}}{\infty}{3p/2}{J_k}^p\xnorm{\nsc u_{\leq N/\eta}}{2}{6}{J_k}\nonumber
	\\ &\lesssim \xnorm{\nsc P_{\leq c(\eta)N_k}u_{\leq N/\eta}}{\infty}{2}{J_k}^p\xnorm{\nsc u_{\leq N/\eta}}{2}{6}{J_k}\nonumber
		\\ &\quad +\xnorm{\nsc P_{>c(\eta)N_k}u_{\leq N/\eta}}{\infty}{2}{J_k}^p\xnorm{\nsc u_{\leq N/\eta}}{2}{6}{J_k}.\nonumber
		\end{align}

For the first term, we use \eqref{Ceta22} to get
	\begin{align}\xnorm{\nsc& P_{\leq c(\eta)N_k}u_{\leq N/\eta}}{\infty}{2}{J_k}^p\xnorm{\nsc u_{\leq N/\eta}}{2}{6}{J_k}\nonumber
	\\ &\label{lts absorb 2}\lesssim \eta^{s_c}\xnorm{\nsc u_{\leq N/\eta}}{2}{6}{J_k}.\end{align}

For the next term, we note that we only need to consider the case $c(\eta)N_k< N/\eta$, in which case we have $1\lesssim C_\eta(\tfrac{N}{N_k})^{s_c-1/2}.$ Using Bernstein, Lemma~\ref{lem:sb}, and \eqref{a priori}, we estimate
	\begin{align}
	\xnorm{\nsc& P_{>c(\eta)N_k}u_{\leq N/\eta}}{\infty}{2}{J_k}^p\xnorm{\nsc u_{\leq N/\eta}}{2}{6}{J_k}\nonumber
	\\ &\lesssim_u C_\eta(\tfrac{N}{N_k})^{s_c-1/2}\xnorm{\nsc u_{\leq N/\eta}}{\infty}{2}{J_k}\nonumber
	\\ &\lesssim_u C_\eta(\tfrac{N}{N_k})^{s_c-1/2}N^{s_c-s}\norm{u_{\leq N/\eta}}_{L_t^\infty\dot{H}_x^{s}(J_k\times\R^3)}.\label{lts gain 2}
	\end{align}
Summing \eqref{lts absorb 2} and \eqref{lts gain 2} over $J_k\subset I$ and using \eqref{def:K2}, we find
	\begin{align}
	\xonorm{\nsc& P_{\leq N}F(u_{\leq N/\eta})}{2}{6/5}\nonumber
	\\ &\lesssim_u \eta^{s_c} A_I(N/\eta)\nonumber
	+ C_\eta \sup_{J_k\subset I}\norm{u_{\leq N/\eta}}_{L_t^\infty\dot{H}_x^s(J_k\times\R^3)} 
	N^{\sigma(s)}K_I^{1/2}.\nonumber
	\end{align}
	
Next we use Bernstein, H\"older, Sobolev embedding, and \eqref{a priori} to estimate 
		\begin{align*}
		\xonorm{\nsc P_{\leq N}\big(F(u)-F(u_{\leq N/\eta})\big)}{2}{6/5}&\lesssim
		N^{s_c}\xonorm{u}{\infty}{3p/2}^p
		\sum_{M>N/\eta}\xonorm{u_M}{2}{6}
		\\&\lesssim_u
		\sum_{M>N/\eta}\big(\tfrac{N}{M}\big)^{s_c}A_I(M).
		\end{align*}

Collecting the estimates, we complete the proof of Lemma~\ref{lem:recurrence2}. 
\end{proof}

We turn to the proof of Proposition~\ref{prop:LTS2}.

\begin{proof}[Proof of Proposition~\ref{prop:LTS2}] We proceed by induction. For the base case, we take $N>\sup_{t\in I}N(t)\geq 1$, so that $N^{2(s_c-s)}\big(\tfrac{N}{N(t)}\big)^{2s_c-1}\geq 1$ for $t\in I$. Lemma~\ref{lem:sb} gives
	\begin{align*}
	A_I(N)^2\lesssim_u 1+\smallint_I N(t)^2\,dt \lesssim_u N^{2(s_c-s)}N^{2s_c-1}K_I.
	\end{align*}
Thus for $N>\sup_{t\in I}N(t)$, we have
	\begin{equation}\label{basecase2}
	A_I(N)\leq C_u\big[1+N^{\sigma(s)}K_I^{1/2}\big].
	\end{equation}
This inequality clearly remains true if we replace $C_u$ by any larger constant.

We now suppose that \eqref{basecase2} holds at frequencies $\geq 2N$; we will use Lemma~\ref{lem:recurrence2} to show that it holds at frequency $N$.

Applying Strichartz, Lemma~\ref{lem:recurrence2}, \eqref{a priori}, and \eqref{eq:input22} gives
	\begin{align}
	A_I(N)\nonumber
	&\leq\tilde{C}_u\nonumber
	\big[\inf_{t\in I}\norm{u_{\leq N}(t)}_{\dot{H}_x^{s_c}}
	+C_\eta\sup_{J_k\subset I}
	\norm{u_{\leq N/\eta}}_{L_t^\infty\dot{H}_x^s(J_k\times\R^3)} 
	N^{\sigma(s)}K_I^{1/2}
	\\&\quad\quad\quad\quad+\sum_{M\geq N/\eta}\big(\tfrac{N}{M}\big)^{s_c}A_I(M)\big]\label{for smallness2}
	\\ &\leq\tilde{C}_u\big[1+C_\eta N^{-\sigma(s)}K_I^{1/2}+\sum_{M\geq N/\eta}\big(\tfrac{N}{M}\big)^{s_c}A_I(M)\big].\nonumber
	\end{align}
We let $\eta<1/2$ and notice that $s>s_c-1/2$ guarantees $\sigma(s)<s_c$. Thus, using the inductive hypothesis, we find
	\begin{align*}
	A_I(N)
	&\leq \tilde{C}_u\big[1+C_\eta N^{\sigma(s)}K_I^{1/2}+\sum_{M\geq N/\eta}\big(\tfrac{N}{M})^{s_c}(C_u+C_uM^{\sigma(s)}K_I^{1/2})\big]
	\\ &\leq \tilde{C}_u\big[1+C_\eta N^{\sigma(s)}K_I^{1/2}\big]+C_u\tilde{C}_u\big[\eta^{s_c}+\eta^{s_c-\sigma(s)}N^{\sigma(s)}K_I^{1/2}\big]
	\end{align*}
Choosing $\eta$ sufficiently small depending on $\tilde{C}_u$, we find
	$$A_I(N)\leq\tilde{C}_u(1+C_\eta N^{\sigma(s)}K_I^{1/2})
	+\tfrac12 C_u(1+N^{\sigma(s)}K_I^{1/2}).$$
Finally, choosing $C_u$ possibly even larger to guarantee $C_u\geq 2(1+C_\eta)\tilde{C}_u$, we deduce from the above inequality that
	$$A_I(N)\leq C_u(1+N^{\sigma(s)}K_I^{1/2}),$$
as was needed to show. This completes the proof of \eqref{eq:LTS2}.

The estimate \eqref{eq:LTS2sc} follows directly from \eqref{eq:LTS2} with $s=s_c$. With \eqref{eq:LTS2sc} in place, we can prove \eqref{eq:LTS2small} by continuing from \eqref{for smallness2}, choosing $\eta$ sufficiently small, and noting that $\inf_{t\in I}N(t)\geq 1$ implies
	$$\lim_{N\to 0}\big[\inf_{t\in I}\norm{u_{\leq N}(t)}_{\dot{H}_x^{s_c}} 
	+\sup_{J_k\subset I}\norm{u_{\leq N/\eta}}_{L_t^\infty\dot{H}_x^{s_c}(J_k\times\R^3)}\big]=0$$
for any $\eta>0$. This completes the proof of Proposition~\ref{prop:LTS2}.
\end{proof}


\subsection{The rapid frequency-cascade scenario}\label{section:rfc2} In this section we rule out solutions as in Theorem~\ref{thm:reduction1} for which $3/4<s_c<1$ and 
	\begin{equation}
	\label{eq:finite K2}
	K_{[0,T_{max})}=\int_0^{T_{max}} N(t)^{3-2s_c}\,dt<\infty.
	\end{equation}
We show that \eqref{eq:finite K2} and Proposition~\ref{prop:LTS2} imply that such solutions would possess additional decay. We then use the conservation of mass to derive a contradiction.

	Note that we have
	\begin{equation}
	\label{N to infinity}
	\lim_{t\to T_{max}}N(t)=\infty,
	\end{equation}
whether $T_{max}$ is finite or infinite. Indeed, in the case $T_{max}<\infty$ this follows from Corollary~\ref{cor:constancy}, while in the case $T_{max}=\infty$ this follows from \eqref{eq:finite K2} and \eqref{def:K2}. 

We begin with the following lemma. 

\begin{lemma}[Improved decay]\label{lem:improved decay} Let $u:[0,T_{max})\times\R^3\to\C$ be an almost periodic solution as in Theorem~\ref{thm:reduction1} with $3/4<s_c<1$. Suppose
		\begin{equation}
		\label{eq:input22}
		u\in L_t^\infty\dot{H}_x^{s}([0,T_{max})\times\R^3)\quad\text{for some}\quad s_c-1/2< s\leq s_c.
		\end{equation}
If \eqref{eq:finite K2} holds, then
		\begin{equation}
		\label{eq:output22}
		u\in L_t^\infty\dot{H}_x^{\sigma}([0,T_{max})\times\R^3)\quad\text{for all}\quad s-\sigma(s)<\sigma\leq s_c,
		\end{equation}
where as above $\sigma(s):=2s_c-s-1/2$.
\end{lemma}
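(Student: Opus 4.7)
The plan is to separately control the high-frequency and low-frequency components of $u$ in $\dot{H}_x^\sigma$. For the high-frequency part, Bernstein combined with the a priori bound $u\in L_t^\infty\dot{H}_x^{s_c}$ gives $\norm{u_{>1}}_{L_t^\infty\dot{H}_x^\sigma}\lesssim_u 1$ for any $\sigma\leq s_c$, reducing matters to bounding $\norm{\ntw\sigma P_N u}_{L_t^\infty L_x^2}$ for dyadic $N\leq 1$ in an $\ell^2$-summable manner over $N$.

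For such $N$, I will apply the reduced Duhamel formula (Proposition~\ref{reduced duhamel}) and the standard Strichartz estimate on $[t,T_{max})$, obtaining
\[\norm{\ntw\sigma P_N u}_{L_t^\infty L_x^2([0,T_{max}))}\lesssim N^\sigma\xnorm{P_N F(u)}{2}{6/5}{[0,T_{max})}.\]
I will estimate the nonlinear term by adapting the decomposition from Lemma~\ref{lem:recurrence2}: writing $F(u)=F(u_{\leq N/\eta})+[F(u)-F(u_{\leq N/\eta})]$ for small $\eta>0$, the pointwise bounds recorded in Section~\ref{section:lemmas}, H\"older, the fractional chain rule, and Sobolev embedding $\dot H_x^{s_c}\hookrightarrow L_x^{3p/2}$ reduce everything to controlling spacetime norms of $u_{\leq N/\eta}$ and $u_{>N/\eta}$. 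These in turn are controlled via the long-time Strichartz estimate (Proposition~\ref{prop:LTS2}): since $K_{[0,T_{max})}<\infty$ by \eqref{eq:finite K2}, the quantity $A_I(M)$ satisfies $A_I(M)\lesssim_u 1+M^{\sigma(s)}$ uniformly in $I$. The input regularity $s$ enters by improving the control on $u$ at low dyadic frequencies relative to the baseline $s_c$-estimate, an effect that is necessary to obtain the claimed sharp exponent.

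Combining all these ingredients, I expect a bound of the schematic form $\norm{\ntw\sigma P_N u}_{L_t^\infty L_x^2}\lesssim_u N^{\sigma-(s-\sigma(s))}$ for $N\leq 1$, which is $\ell^2$-summable dyadically precisely under the condition $\sigma>s-\sigma(s)$ stated in the lemma. Together with the high-frequency contribution, this yields $u\in L_t^\infty\dot{H}_x^\sigma$.

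The main obstacle, I expect, is pinning down the sharp exponent $s-\sigma(s)$. A naive Bernstein step applied to $\norm{P_L u}_{L_t^2L_x^6}\lesssim L^{-s_c}A_I(2L)$ loses a factor of $L^{s_c-s}$ in the low-frequency contributions, producing only the weaker conclusion $\sigma>s_c-\sigma(s)=s+1/2-s_c$. Recovering the missing factor of $s_c-s$ requires a more careful decomposition of the nonlinearity, likely using the paraproduct estimate (Lemma~\ref{lem:paraproduct}) to place derivatives on the appropriate factor, or a refined dyadic scheme that exploits both the hypothesis $u\in L_t^\infty\dot{H}_x^s$ and the long-time Strichartz bound $A_I(M)\lesssim_u 1+M^{\sigma(s)}$ simultaneously; the smallness statement \eqref{eq:LTS2small} for $A_I$ at very low dyadic scales is also likely to play a role.
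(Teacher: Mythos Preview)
Your overall structure matches the paper's: split into high and low frequencies, invoke the reduced Duhamel formula, and estimate the nonlinearity via the decomposition in Lemma~\ref{lem:recurrence2}. You are also right that the paraproduct estimate (Lemma~\ref{lem:paraproduct}) is needed when handling the piece $P_{\leq N}\big(F(u)-F(u_{\leq N})\big)$.

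However, there is a genuine gap. Your claimed input $A_I(M)\lesssim_u 1+M^{\sigma(s)}$ is too weak: for $M\leq 1$ the additive constant dominates, so you only get $A_I(M)\lesssim_u 1$, and the decomposition from Lemma~\ref{lem:recurrence2} then produces a term of size $\sum_{M\geq N/\eta}(N/M)^{s_c}\cdot 1\sim \eta^{s_c}$ that does not decay with $N$. Neither the paraproduct lemma nor the smallness statement \eqref{eq:LTS2small} removes this constant: \eqref{eq:LTS2small} gives $A_I(N)\lesssim_u\eps$ only for $N\leq N_0(\eps)$, with no quantitative rate in $N$.

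The missing step is the paper's \eqref{eq:quantA}: one actually has $A_{[0,T_{max})}(N)\lesssim_u N^{\sigma(s)}$ with \emph{no} additive constant. To see this, re-run the Strichartz/Lemma~\ref{lem:recurrence2} argument on a nested exhaustion $I_n\nearrow[0,T_{\max})$ and observe that the ``1'' in \eqref{eq:LTS2} originates from the term $\inf_{t\in I_n}\norm{u_{\leq N}(t)}_{\dot H_x^{s_c}}$. The crucial input you are missing is that \eqref{eq:finite K2} together with Corollary~\ref{cor:constancy} forces $N(t)\to\infty$ as $t\to T_{\max}$, and hence this infimum vanishes in the limit. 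With the constant removed, the rest of your outline goes through: reduced Duhamel plus the decomposition (using the paraproduct estimate for the $F(u)-F(u_{\leq N})$ piece) yields $\norm{\vert\nabla\vert^{s} u_{\leq N}}_{L_t^\infty L_x^2}\lesssim_u N^{\sigma(s)}$, and then Bernstein and dyadic summation give \eqref{eq:output22} exactly under the condition $\sigma>s-\sigma(s)$.
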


\begin{proof} Throughout the proof, we take all spacetime norms over $[0,T_{max})\times\R^3.$ 

We first use Proposition~\ref{prop:LTS2} and \eqref{eq:finite K2} to show 
		\begin{equation}
		\label{eq:quantA}
		A_{[0,T_{max})}(N)\lesssim_u N^{\sigma(s)}.
		\end{equation}

Let $I_n\subset [0,T_{max})$ be a nested sequence of compact time intervals, each of which is a contiguous union of chracteristic subintervals. We let $\eta>0$ and apply Strichartz, Lemma~\ref{lem:recurrence2}, and \eqref{eq:input22} to estimate
	$$A_{I_n}(N)\lesssim_u \inf_{t\in I_n}\norm{u_{\leq N}(t)}_{\dot{H}_x^{s_c}}+C_\eta N^{\sigma(s)}K_{I_n}^{1/2}+\sum_{M\geq N/\eta}\big(\tfrac{N}{M}\big)^{s_c}A_{I_n}(M).$$ 
As \eqref{eq:LTS2} gives $A_{I_n}(N)\lesssim_u 1+N^{\sigma(s)}K_{I_n}^{1/2}$, we may choose $\eta$ sufficiently small and continue from above to get
	\begin{equation}
	\label{eq:idmid}
	A_{I_n}(N)\lesssim_u \inf_{t\in I_n}\norm{u_{\leq N}}_{\dot{H}_x^{s_c}}+N^{\sigma(s)}K_{I_n}^{1/2}.
	\end{equation}

Using \eqref{N to infinity}, we get that for any $N>0$ we have $\lim_{t\to T_{max}}\norm{u_{\leq N}}_{\dot{H}_x^{s_c}}=0.$ Thus sending $n\to\infty$, continuing from \eqref{eq:idmid}, and using \eqref{eq:finite K2}, we deduce that \eqref{eq:quantA} holds.

We next show that \eqref{eq:quantA} and \eqref{eq:input22} imply 
		\begin{equation}
		\label{eq:quantmass}
		\xonorm{\vert\nabla\vert^{s}u_{\leq N}}{\infty}{2}\lesssim_u N^{\sigma(s)}.
		\end{equation}

We first use Proposition~\ref{reduced duhamel} and Strichartz to estimate
	$$\xonorm{\vert\nabla\vert^s u_{\leq N}}{\infty}{2}\lesssim_u
	\xonorm{\vert\nabla\vert^s P_{\leq N}\big(F(u)\big)}{2}{6/5}.$$
	
We decompose the nonlinearity as $F(u)=F(u_{\leq N})+[F(u)-F(u_{\leq N})].$ Noting that $s_c-1/2<s\leq s_c$ implies $6\leq \tfrac{3p}{2+ps-p}<\infty$, we can first use H\"older, the fractional chain rule, Sobolev embedding, \eqref{a priori}, and \eqref{eq:input22} to estimate
	\begin{align*}
	\xonorm{\vert\nabla\vert^s F(u_{\leq N})}{2}{6/5}
	&\lesssim\xonorm{u}{\infty}{3p/2}^{p-1}\xonorm{u}{\infty}{\frac{6}{3-2s}}\xonorm{\vert\nabla\vert^s u_{\leq N}}{2}{\frac{3p}{2+ps-p}}
	\\ &\lesssim \xonorm{\nsc u}{\infty}{3p/2}^{p-1}\xonorm{\vert\nabla\vert^s u}{\infty}{2}
	\xonorm{\nsc u_{\leq N}}{2}{6}
	\\ &\lesssim_u N^{\sigma(s)}.
	\end{align*}

Next, we note that $F(u)-F(u_{\leq N})=\text{\O}(u_{>N}u^p)$ and that $s>s_c-1/2$ implies $\sigma(s)<s_c$. Thus we can use Bernstein, Lemma~\ref{lem:paraproduct}, the fractional chain rule, Sobolev embedding, \eqref{a priori}, \eqref{eq:input22}, and \eqref{eq:quantA} to estimate
	\begin{align}
	\xonorm{&\vert\nabla\vert^s P_{\leq N}\big(F(u)-F(u_{\leq N})\big)}{2}{6/5}
	\nonumber
	\\ &\lesssim N^{s_c}\xonorm{\vert\nabla\vert^{-(s_c-s)}\big(u^pu_{>N}\big)}{2}{6/5}\nonumber
	\\ &\lesssim N^{s_c}\xonorm{\vert\nabla\vert^{s_c-s}(u^p)}{\infty}{\frac{3p}{5p-4-2ps}}\sum_{M>N}\xonorm{\vert\nabla\vert^{-(s_c-s)}u_M}{2}{\frac{3p}{2+ps-p}}\nonumber
	\\ &\lesssim \xonorm{u}{\infty}{3p/2}^{p-2}\xonorm{u}{\infty}{\frac{6}{3-2s}}\xonorm{\vert\nabla\vert^{s_c-s}u}{\infty}{\frac{6}{3-2s}}\nonumber
	\sum_{M>N}\big(\tfrac{N}{M}\big)^{s_c}\xonorm{\vert\nabla\vert^{s}u_M}{2}{\frac{3p}{2+ps-p}}
	\\&\lesssim\xonorm{\vert\nabla\vert^{s_c}u}{\infty}{2}^{p-1}\xonorm{\vert\nabla\vert^s u}{\infty}{2}\sum_{M>N}\big(\tfrac{N}{M}\big)^{s_c}\xonorm{\nsc u_M}{2}{6}
	\nonumber
	\\ &\lesssim_u\nonumber\sum_{M>N}\big(\tfrac{N}{M}\big)^{s_c} M^{\sigma(s)}.
	\\&\lesssim_uN^{\sigma(s)}.\nonumber
	\end{align}
Note that in the case $s=s_c$, we would simply use H\"older instead of Lemma~\ref{lem:paraproduct} and the fractional chain rule.

%
The last two estimates together imply \eqref{eq:quantmass}. 

Finally, we use \eqref{eq:quantmass} to prove \eqref{eq:output22}. We fix $s-\sigma(s)<\sigma\leq s_c$ and use Bernstein, \eqref{a priori}, and \eqref{eq:quantmass} to estimate
	\begin{align*}
	\xonorm{\vert\nabla\vert^{\sigma}u}{\infty}{2}
	&\lesssim
	\xonorm{\nsc u_{\geq 1}}{\infty}{2}+
	\sum_{M\leq 1}M^{\sigma-s}\xonorm{\vert\nabla\vert^s u_{M}}{\infty}{2}
	\\ &\lesssim_u 1+\sum_{M\leq 1} M^{\sigma-s+\sigma(s)}
	\lesssim_u 1.
	\end{align*}
This completes the proof of Lemma~\ref{lem:improved decay}. \end{proof}

We now iterate Lemma~\ref{lem:improved decay} to establish additional decay.

\begin{proposition}[Additional decay]\label{prop:additional decay} Let $u:[0,T_{max})\times\R^3\to\C$ be an almost periodic solution as in Theorem~\ref{thm:reduction1} with $3/4<s_c<1$. If \eqref{eq:finite K2} holds, then $u\in L_t^\infty\dot{H}_x^{-\eps}$ for some $\eps>0$. 
\end{proposition}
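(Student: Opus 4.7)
The strategy is to iterate Lemma~\ref{lem:improved decay}, starting from the a priori information $u\in L_t^\infty\dot{H}_x^{s_c}$ guaranteed by \eqref{a priori}, and exploit the fact that the ``gain'' $\sigma(s)=2s_c-s-1/2$ provided by the lemma is genuinely positive and stays uniformly bounded away from zero so long as $s$ does not hit the threshold $s_c-1/2$. Concretely, for $s\in(s_c-1/2,s_c]$ the lemma converts control of $\dot{H}_x^s$ into control of $\dot{H}_x^\sigma$ for any $\sigma>s-\sigma(s)=2s-2s_c+1/2$, so each application reduces the achievable Sobolev index by (almost) $\sigma(s)$.

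First, I would apply Lemma~\ref{lem:improved decay} with $s=s_c$. The hypothesis $s_c-1/2<s\leq s_c$ is satisfied, and \eqref{eq:finite K2} is our standing assumption, so the lemma yields $u\in L_t^\infty\dot{H}_x^\sigma$ for every $\sigma>s_c-\sigma(s_c)=1/2$. In particular, for any sufficiently small $\delta>0$ we obtain
\begin{equation*}
u\in L_t^\infty\dot{H}_x^{1/2+\delta}([0,T_{max})\times\R^3).
\end{equation*}
Note that this first step is legitimate regardless of the precise value of $s_c\in(3/4,1)$, since $s_c-1/2\in(1/4,1/2)$ lies strictly below $1/2$.

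Second, I would feed this improved regularity back into Lemma~\ref{lem:improved decay} with $s=1/2+\delta$. For $\delta>0$ small we have $s_c-1/2<1/2+\delta\leq s_c$ (using $s_c<1$ for the left inequality and $s_c>3/4$ for the right), so the lemma is applicable. It yields $u\in L_t^\infty\dot{H}_x^\sigma$ for every
\begin{equation*}
\sigma>(1/2+\delta)-\sigma(1/2+\delta)=2(1/2+\delta)-2s_c+1/2=\tfrac{3}{2}-2s_c+2\delta.
\end{equation*}
The crucial observation is that the assumption $s_c>3/4$ makes $\tfrac{3}{2}-2s_c<0$, so by choosing $\delta>0$ small enough we can ensure $\tfrac{3}{2}-2s_c+2\delta<0$. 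Setting $\eps:=-(\tfrac{3}{2}-2s_c+2\delta)/2>0$, say, yields $u\in L_t^\infty\dot{H}_x^{-\eps}$, as desired.

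The main point to verify is just that the two-step iteration actually lands below zero, and this is exactly where the hypothesis $s_c>3/4$ is used in a sharp way: the total gain $\sigma(s_c)+\sigma(1/2^+)=2s_c-1^-$ must exceed the starting regularity $s_c$, which is equivalent to $s_c>3/4$. There is no genuine analytic obstacle beyond checking that the intermediate Sobolev index $1/2+\delta$ sits in the admissible window $(s_c-1/2,s_c]$ demanded by Lemma~\ref{lem:improved decay}, which it does throughout the range $s_c\in(3/4,1)$. No additional ingredients (beyond Lemma~\ref{lem:improved decay} itself and the finiteness of $K_{[0,T_{max})}$) are required.
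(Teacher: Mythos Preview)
Your proposal is correct and follows the same core idea as the paper, namely iterating Lemma~\ref{lem:improved decay} starting from $s=s_c$. The paper's version is slightly different in execution: it fixes a small step size $0<\delta<\min\{1/4,s_c-1/2\}$, sets $s_n:=s_c-n\delta$, and iterates until some $n^*\in(1/(4\delta),1/(2\delta))$ for which $s_{n^*}-\sigma(s_{n^*})<0$. Your two-step argument is more economical and makes the role of the hypothesis $s_c>3/4$ transparent: the first application drops the admissible index to just above $1/2$, and the second (with $s=1/2+\delta$) drops it to just above $3/2-2s_c<0$. Both routes rely only on Lemma~\ref{lem:improved decay} and the finiteness of $K_{[0,T_{max})}$; yours simply takes the largest allowable jump at each stage rather than many uniform small steps.
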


\begin{proof} Let $0<\delta<\min\{1/4,s_c-1/2\}$ and for each $n\geq 0$ define $s_n:=s_c-n\delta$. We have from Lemma~\ref{lem:improved decay} that
	$$u\in L_t^\infty\dot{H}_x^{s_n}\implies u\in L_t^\infty\dot{H}_x^{\sigma}\quad\text{for all}\quad 0\leq n<\tfrac{1}{2\delta}\quad\text{and}\quad s_n-\sigma(s_n)<\sigma\leq s_c.$$
The restriction $n<\tfrac{1}{2\delta}$ guarantees $s_n>s_c-1/2$. As above, $\sigma(s):=2s_c-s-1/2$.

As \eqref{a priori} gives $u\in L_t^\infty\dot{H}_x^{s_0}$ and the constraint $0<\delta<s_c-1/2$ guarantees $s_n-\sigma(s_n)<s_{n+1}\leq s_c$ for all $n\geq 0$, we get by induction that 
	\begin{equation}
	\label{eq:adhelp}
	u\in L_t^\infty\dot{H}_x^{\sigma}\quad\text{for all}
	\quad 0\leq n<\tfrac{1}{2\delta}\quad\text{and}
	\quad s_n-\sigma(s_n)<\sigma\leq s_c.
	\end{equation}

As $\delta<1/4$, we may find $n^*$ so that $\tfrac{1}{4\delta}<n^*<\tfrac{1}{2\delta}.$ As the constraint $n^*>\tfrac{1}{4\delta}$ implies $s_{n^*}-\sigma(s_{n^*})<0$, we deduce from \eqref{eq:adhelp} that $u\in L_t^\infty\dot{H}_x^{-\eps}$ for some $\eps>0$. This completes the proof of Proposition~\ref{prop:additional decay}. \end{proof}

Finally, we turn to the following.

\begin{theorem}[No frequency-cascades]\label{thm:rfc2} There are no almost periodic solutions as in Theorem~\ref{thm:reduction1} with $3/4<s_c<1$ such that \eqref{eq:finite K2} holds.
\end{theorem}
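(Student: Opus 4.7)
The plan is to assume for contradiction that such a solution $u$ exists and then derive a contradiction from conservation of mass. The first step is to apply Proposition~\ref{prop:additional decay}, which under the standing hypothesis \eqref{eq:finite K2} upgrades $u$ to $L_t^\infty\dot{H}_x^{-\eps}([0,T_{max})\times\R^3)$ for some $\eps>0$. Interpolating this negative-regularity bound against the a priori $L_t^\infty\dot{H}_x^{s_c}$ bound yields $u\in L_t^\infty L_x^2$, so that the mass $M[u(t)]=\|u(t)\|_{L_x^2}^2$ is finite and uniformly bounded. By conservation of mass, $M[u(t)]\equiv m_0$ for some $m_0\geq 0$; since $u$ is nontrivial one should have $m_0>0$, and the goal becomes showing this is impossible.

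Recall from \eqref{N to infinity} that $N(t)\to\infty$ as $t\to T_{max}$. I would show that in fact $\|u(t)\|_{L_x^2}\to 0$ as $t\to T_{max}$. Fix $\eta>0$ and decompose
\begin{equation*}
u(t) = P_{\leq c(\eta)N(t)} u + P_{c(\eta)N(t)<\cdot\leq C(\eta)N(t)} u + P_{>C(\eta)N(t)} u,
\end{equation*}
where $C(\eta)$ is the compactness modulus giving $\|P_{>C(\eta)N(t)}u\|_{\dot{H}_x^{s_c}}^2<\eta$. For the middle and high-frequency pieces, Bernstein combined with the $\dot{H}_x^{s_c}$ bounds yields
\begin{equation*}
\|P_{c(\eta)N(t)<\cdot}u\|_{L_x^2}^2 \lesssim_u (c(\eta)N(t))^{-2s_c},
\end{equation*}
which vanishes as $t\to T_{max}$.

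The low-frequency piece requires a complementary low-frequency control $\|P_{\leq c(\eta)N(t)}u\|_{\dot{H}_x^{s_c}}^2<\eta$, which is not explicit in the paper's definition of almost periodicity but follows from the $L_x^2$ pre-compactness of the rescaled family $\{|\nabla|^{s_c}v_t\}$ via a standard Riesz--Kolmogorov argument: for a uniformly $L_x^2$-pre-compact family, the Fourier $L_x^2$-mass on shrinking sets is uniformly small. Given this, interpolation between $\dot{H}_x^{-\eps}$ and $\dot{H}_x^{s_c}$ gives
\begin{equation*}
\|P_{\leq c(\eta)N(t)}u\|_{L_x^2} \leq \|u\|_{\dot{H}_x^{-\eps}}^{s_c/(s_c+\eps)}\|P_{\leq c(\eta)N(t)}u\|_{\dot{H}_x^{s_c}}^{\eps/(s_c+\eps)} \lesssim_u \eta^{\eps/(2(s_c+\eps))},
\end{equation*}
uniformly in $t$.

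Summing the three contributions produces $m_0 = \|u(t)\|_{L_x^2}^2 \lesssim_u (c(\eta)N(t))^{-2s_c} + \eta^{\eps/(s_c+\eps)}$. Sending $t\to T_{max}$ and then $\eta\to 0$ yields $m_0=0$, so $u\equiv 0$, contradicting the fact that $u$ blows up. The main technical point I expect to require care is the low-frequency $\dot{H}_x^{s_c}$ smallness needed for the interpolation step, since the almost-periodicity definition as stated in the paper records only the corresponding high-frequency tail; once this complementary bound is in hand, the remainder of the argument is a straightforward frequency-localization combined with mass conservation.
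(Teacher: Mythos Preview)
Your proposal is correct and follows essentially the same route as the paper's proof: invoke Proposition~\ref{prop:additional decay} to place $u$ in $L_t^\infty\dot{H}_x^{-\eps}$, interpolate against low-frequency $\dot{H}_x^{s_c}$-smallness to control $\|P_{\leq c(\eta)N(t)}u\|_{L_x^2}$, use Bernstein and $N(t)\to\infty$ for the high-frequency tail, and conclude via mass conservation. Two minor remarks: your three-piece decomposition is slightly more elaborate than necessary---the paper splits only into $P_{\leq c(\eta)N(t)}u$ and $P_{>c(\eta)N(t)}u$---and the low-frequency $\dot{H}_x^{s_c}$-smallness you flag as ``not explicit'' is in fact used freely throughout the paper (see, e.g., \eqref{Ceta22}) as a standard consequence of the precompactness formulation of almost periodicity, exactly as you argue.
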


\begin{proof} Suppose $u$ were such a solution and let $\eta>0$. By almost periodicity, we may find $c(\eta)$ small enough that $\xonorm{\nsc u_{\leq c(\eta)N(t)}}{\infty}{2}<\eta.$ Thus, by interpolation and Proposition~\ref{prop:additional decay}, we have
	\begin{align*}
	\xonorm{u_{\leq c(\eta)N(t)}}{\infty}{2}\lesssim
	\xonorm{\nsc u_{\leq c(\eta)N(t)}}{\infty}{2}^{\frac{\eps}{s_c+\eps}}
	\xonorm{\vert\nabla\vert^{-\eps}u}{\infty}{2}^{\frac{s_c}{s_c+\eps}}
	\lesssim_u \eta^{\frac{\eps}{s_c+\eps}}
	\end{align*}
for some $\eps>0$. 

On the other hand, using Bernstein and \eqref{a priori} we get
	$$\norm{u_{>c(\eta)N(t)}(t)}_{L_x^2}
	\lesssim_u [c(\eta)N(t)]^{-s_c}
	\quad\text{for any}
	\quad t\in[0,T_{max}).$$
	
Thus
	$$\norm{u(t)}_{L_x^2}\lesssim_u \eta^{\frac{\eps}{s_c+\eps}}+[c(\eta)N(t)]^{-s_c}\quad\text{for any}\quad t\in[0,T_{max}).$$

Using \eqref{N to infinity} and the fact that $\eta>0$ was arbitrary, we deduce
	$$\norm{u(t)}_{L_x^2}\to 0\quad\text{as}\quad t\to T_{max}.$$
By the conservation of mass, we conclude that $M[u(t)]\equiv 0$. Thus we must have that $u\equiv 0$, which contradicts that $u$ blows up. This completes the proof of Theorem~\ref{thm:rfc2}. \end{proof}


\subsection{A frequency-localized Lin--Strauss Morawetz inequality}\label{section:flmor2}

In this section, we use Proposition~\ref{prop:LTS2} to prove a frequency-localized Lin--Strauss Morawetz inequality, which we use in Section~\ref{section:soliton2} to rule out the quasi-soliton scenario. As $s_c>1/2$, we prove an estimate that is localized to high frequencies. 

The main result of this section is the following.

\begin{proposition}[Frequency-localized Morawetz]\label{prop:flmor2} Let $u:[0,T_{max})\times\R^3\to\C$ be an almost periodic solution as in Theorem~\ref{thm:reduction1} with $3/4<s_c<1$. Let $I\subset[0,T_{max})$ be a compact time interval, which is a contiguous union of characteristic subintervals $J_k$. Then for any $\eta>0$, there exists $N_0=N_0(\eta)>0$ such that for $N<N_0$, we have
\begin{equation}\label{eq:flmor2}
\iint_{I\times\R^3}\frac{\vert u_{>N}(t,x)\vert^{p+2}}{\vert x\vert}\,dx\,dt\lesssim_u\eta(N^{1-2s_c}+K_I),
\end{equation}
where $K_I$ is as in \eqref{def:K2}.
\end{proposition}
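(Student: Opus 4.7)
The plan is to apply the Lin--Strauss Morawetz identity not to $u$ itself (which need not lie in $\dot{H}_x^{1/2}$, since $s_c>1/2$ and we have no lower-regularity \emph{a priori} control) but to its high-frequency truncation $v := u_{>N}$. The function $v$ satisfies the perturbed Schr\"odinger equation $(i\partial_t+\Delta)v = F(v) + \mathcal{E}$ with error
$\mathcal{E} = P_{>N}F(u) - F(u_{>N}) = -P_{\leq N}F(u_{>N}) + P_{>N}\big[F(u)-F(u_{>N})\big]$,
and $F(u)-F(u_{>N}) = \text{\O}(u_{\leq N}|u|^p)$; every piece of $\mathcal{E}$ therefore either carries an outer low-frequency projection or contains an explicit factor of $u_{\leq N}$.

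First, I would run the Morawetz calculation of Proposition~\ref{prop:mor} on $v$ with the unlocalized weight $a(x) = (\varepsilon^2+|x|^2)^{1/2}$, discarding the nonnegative Hessian and (in 3D) $\delta_0$-mass contributions, and then sending $\varepsilon\downarrow 0$. The outcome is
\[
\iint_{I\times\R^3} \tfrac{|v|^{p+2}}{|x|}\,dx\,dt \lesssim \|\text{Mor}(t)\|_{L_t^\infty(I)} + |\text{Err}(\mathcal{E})|.
\]
The Morawetz action obeys $\|\text{Mor}(t)\|_{L_t^\infty} \lesssim \||\nabla|^{1/2}v\|_{L_t^\infty L_x^2}^2$ by Cauchy--Schwarz and fractional Hardy; Bernstein (using the high-frequency support of $v$) together with \eqref{a priori} then gives $\lesssim_u N^{1-2s_c}$, which supplies the first term on the right of \eqref{eq:flmor2}.

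The heart of the proof is controlling the error $\text{Err}(\mathcal{E})$, which is schematically a spacetime integral of $|\mathcal{E}|\big(|\nabla v| + |v|/|x|\big)$ (coming respectively from $|\nabla a|\lesssim 1$ and $\Delta a \sim 1/|x|$). Splitting $\mathcal{E}$ into its two constituents, I would apply H\"older in spacetime, Bernstein, fractional Hardy, and the fractional product and chain rules to bound $\text{Err}(\mathcal{E})$ on each characteristic subinterval $J_k$ by the long-time Strichartz quantity $A_I(N')$ at frequencies $N'$ comparable to $N$, multiplied by the almost-periodic smallness factor $\sup_{t\in J_k}\||\nabla|^{s_c}u_{\leq c(\eta)N(t)}\|_{L_x^2}$ coming from the low-frequency pieces and by an additional $\||\nabla|^{1/2}v\|_{L_t^\infty L_x^2}$ or $\||\nabla|^{s_c} u\|_{L_t^\infty L_x^2}$ from the remaining factor. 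Summing over $J_k\subset I$ and applying Cauchy--Schwarz converts the sum into a factor of $K_I^{1/2}$; Proposition~\ref{prop:LTS2} then supplies $A_I(N)^2 \lesssim_u 1 + N^{2s_c-1}K_I$, which after squaring matches the target $N^{1-2s_c}+K_I$ up to constants.

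The main obstacle is producing the overall factor of $\eta$ required by the statement whenever $N<N_0(\eta)$. This is precisely what the refined long-time Strichartz bound \eqref{eq:LTS2small} is designed to deliver: choosing $N$ small enough, $A_I(N)$ acquires an extra $\eta$, and each low-frequency factor $u_{\leq c(\eta)N(t)}$ appearing in the error has its $\dot{H}_x^{s_c}$-norm bounded by $\eta$ via almost periodicity. Some care is required to ensure every invocation of Hardy stays at fractional order at most $1/2$, since this is all the margin that $v=u_{>N}$ has above $L_x^2$ through its $\dot{H}_x^{s_c}$-bound; this bookkeeping, rather than any singularity at the origin, is what makes the error analysis genuinely delicate in the inter-critical range $3/4<s_c<1$.
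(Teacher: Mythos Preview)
Your overall plan is correct and mirrors the paper's: run the Lin--Strauss identity on $v=u_{>N}$, bound the action by $\||\nabla|^{1/2}v\|_{L_t^\infty L_x^2}^2$, and control the error terms arising from the frequency truncation via the long-time Strichartz estimate. However, there is a gap in your treatment of the Morawetz action. Bernstein together with \eqref{a priori} yields only $\||\nabla|^{1/2}u_{>N}\|_{L_t^\infty L_x^2}^2 \lesssim_u N^{1-2s_c}$, with \emph{no} factor of $\eta$; yet the stated right-hand side of \eqref{eq:flmor2} is $\eta(N^{1-2s_c}+K_I)$, so this boundary contribution must carry the $\eta$ as well. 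The two mechanisms you invoke for producing $\eta$ --- the small-$N$ bound \eqref{eq:LTS2small} on $A_I(N)$ and the almost-periodic smallness of $u_{\leq c(\eta)N(t)}$ --- apply only to the error terms, since neither a low-frequency factor nor an $A_I$ appears in the action bound. The paper closes this gap via almost periodicity in a different way (Lemma~\ref{lem:highlo}, estimate \eqref{2highsmall}): because $\inf_{t\in I}N(t)\geq 1$, one splits $u_{>N}=u_{>c(\eta)}+u_{N<\cdot\leq c(\eta)}$, uses $\|u_{\leq c(\eta)}\|_{\dot{H}_x^{s_c}}<\eta$ on the second piece, and then Bernstein gives $\||\nabla|^{1/2}u_{>N}\|_{L_t^\infty L_x^2}\lesssim_u \eta N^{1/2-s_c}$ once $N$ is small enough depending on $\eta$. (It is worth noting that for the downstream application in Theorem~\ref{thm:soliton2} the $\eta$ on the $N^{1-2s_c}$ term is in fact dispensable --- one only needs to absorb $\eta K_I$ --- but it is part of the proposition as stated.)

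A secondary remark on organization: your proposed error analysis --- estimating on each $J_k$, summing, and applying Cauchy--Schwarz to extract $K_I^{1/2}$ --- is not the route the paper takes, nor is it the most direct one here. The paper estimates each error term globally on $I$, placing one factor in $L_t^2L_x^6$ (controlled by $A_I$ and hence by $(1+N^{2s_c-1}K_I)^{1/2}$ via Proposition~\ref{prop:LTS2}) against another factor in $L_t^2L_x^6$ or $L_t^\infty L_x^{3p/2}$; the product of two such $L_t^2L_x^6$ bounds already produces $N^{1-2s_c}(1+N^{2s_c-1}K_I)=N^{1-2s_c}+K_I$ with no per-interval summation. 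The $\eta$ in the error terms then comes either from \eqref{eq:LTS2small} on the low-frequency $L_t^2L_x^6$ factor or from an auxiliary split at frequency $N/\eta^2$. Your per-$J_k$ summation is the right idea for proving the long-time Strichartz estimate itself, but for the Morawetz errors the $K_I$ dependence is already packaged inside $A_I(N)$ and need not be rebuilt.
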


To prove Proposition~\ref{prop:flmor2}, we begin as in the proof of the standard Lin--Strauss Morawetz inequality \eqref{eq:ls}. We truncate the low frequencies of the solution and work with $u_{>N}$ for some $N>0$. As $u_{>N}$ is not a true solution to \eqref{nls}, we need to control error terms arising from this frequency projection. To do this, we choose $N$ small enough to capture `most' of the solution and use the estimates proved in Section~\ref{section:lts2}. We make these notions precise in the following lemma. 

\begin{lemma}[High and low frequency control]\label{lem:highlo} Let $u,$ $I,$ $K_I$ be as above. With all spacetime norms over $I\times\R^3$, we have the following.

For any $N>0$, we have
	\begin{equation}\label{2high}
	\xonorm{u_{>N}}{2}{6}\lesssim_u N^{-s_c}(1+N^{2s_c-1}K_I)^{1/2}.
	\end{equation}

For any $\eta>0$, there exists $N_1=N_1(\eta)$ so that for $N<N_1$, we have
	\begin{equation}\label{2highsmall}
	\xonorm{\vert\nabla\vert^{1/2}u_{>N}}{\infty}{2}\lesssim_u \eta N^{1/2-s_c}.
	\end{equation}

For any $\eta>0$, there exists $N_2=N_2(\eta)$ so that for $N<N_2$, we have
	\begin{equation}\label{2low}
	\xonorm{\nsc u_{\leq N}}{2}{6}\lesssim_u \eta(1+N^{2s_c-1}K_I)^{1/2}.
	\end{equation}
\end{lemma}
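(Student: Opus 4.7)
The plan is to prove each of the three estimates in turn; \eqref{2low} follows immediately from Proposition~\ref{prop:LTS2}, so the real work lies in \eqref{2high} and \eqref{2highsmall}.

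For \eqref{2high}, I would combine the long-time Strichartz estimate \eqref{eq:LTS2sc} with a Littlewood--Paley square function argument in $L_x^6$. Since $1<6<\infty$, the square function estimate together with Minkowski's inequality in $L_x^3$ (and Fubini in time) yields
$$\xonorm{u_{>N}}{2}{6}^2 \lesssim \sum_{M>N}\xonorm{u_M}{2}{6}^2,$$
summed over dyadic $M$. Bernstein then gives $\xonorm{u_M}{2}{6} \lesssim M^{-s_c}\xonorm{\nsc u_M}{2}{6} \lesssim M^{-s_c}A_I(2M)$, and invoking \eqref{eq:LTS2sc} produces $\xonorm{u_M}{2}{6}^2 \lesssim_u M^{-2s_c}+M^{-1}K_I$. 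Since $s_c>1/2$, summing these (geometric) dyadic series over $M>N$ yields $N^{-2s_c}(1+N^{2s_c-1}K_I)$, and taking square roots gives \eqref{2high}.

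For \eqref{2highsmall}, the plan is to use Plancherel together with almost periodicity and the hypothesis $N(t)\geq 1$. Littlewood--Paley orthogonality in $L_x^2$ and Bernstein give
$$\norm{\vert\nabla\vert^{1/2}u_{>N}(t)}_{L_x^2}^2 \sim \sum_{M>N} M^{1-2s_c}\norm{\nsc u_M(t)}_{L_x^2}^2.$$
Because $1-2s_c<0$, this weight decreases in $M$, so I would split the sum at a fixed threshold $M_0=c(\eta)$ chosen, via almost periodicity and $N(t)\geq 1$, so that $\xonorm{\nsc u_{\leq M_0}}{\infty}{2}<\eta$ (as in \eqref{Ceta22}). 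The low-$M$ piece ($N<M\leq M_0$) is then bounded by $N^{1-2s_c}\norm{\nsc u_{\leq M_0}(t)}_{L_x^2}^2 < \eta^2 N^{1-2s_c}$, while the high-$M$ piece ($M>M_0$) is bounded by $M_0^{1-2s_c}\norm{\nsc u(t)}_{L_x^2}^2 \lesssim_u M_0^{1-2s_c}$. Demanding $M_0^{1-2s_c}\leq \eta^2 N^{1-2s_c}$ imposes the threshold $N_1(\eta):=c(\eta)\eta^{2/(2s_c-1)}$, below which \eqref{2highsmall} follows after taking square roots and $L_t^\infty$. Estimate \eqref{2low} is just \eqref{eq:LTS2small} of Proposition~\ref{prop:LTS2} with $\eps=\eta$.

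The main obstacle is delivering the quantitative $\eta$ in \eqref{2highsmall}: one must pick $M_0$ small enough for almost periodicity to force $\xonorm{\nsc u_{\leq M_0}}{\infty}{2}<\eta$, yet large enough that the tail over $M>M_0$ remains dominated by $\eta^2 N^{1-2s_c}$. The hypothesis $N(t)\geq 1$ is essential here, since it allows $M_0$ to be chosen independently of $t$; the resulting interplay between $M_0$ and $N$ then dictates the threshold $N_1(\eta)$.
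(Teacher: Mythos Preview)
Your proof is correct and follows essentially the same route as the paper: dyadic decomposition plus \eqref{eq:LTS2sc} for \eqref{2high}, a split at a fixed frequency $c(\eta)$ (using almost periodicity and $N(t)\geq 1$) for \eqref{2highsmall}, and the identification of \eqref{2low} with \eqref{eq:LTS2small}. The only cosmetic difference is that for \eqref{2high} the paper sums in $\ell^1$ via the triangle inequality,
\[
\xonorm{u_{>N}}{2}{6}\leq\sum_{M>N}M^{-s_c}\xonorm{\nsc u_M}{2}{6}\lesssim_u\sum_{M>N}M^{-s_c}(1+M^{2s_c-1}K_I)^{1/2},
\]
rather than via your square-function $\ell^2$ argument; both sums converge (since $s_c>1/2>0$) to the same bound, and your threshold $N_1(\eta)=c(\eta)\eta^{2/(2s_c-1)}$ in \eqref{2highsmall} coincides with the paper's $N_1\ll\eta^{1/(s_c-1/2)}c(\eta)$.
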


\begin{proof} For \eqref{2high}, we use Bernstein and \eqref{eq:LTS2sc} to estimate
	\begin{align*}
	\xonorm{u_{>N}}{2}{6}&\lesssim \sum_{M>N} M^{-s_c}\xonorm{\nsc u_{M}}{2}{6}
	\\&\lesssim_u\sum_{M>N} M^{-s_c}(1+M^{2s_c-1}K_I)^{1/2}
	\\&\lesssim_u N^{-s_c}(1+N^{2s_c-1}K_I)^{1/2}.
	\end{align*}
	
For \eqref{2highsmall}, we let $\eta>0$. Using almost periodicity and the fact that $\inf N(t)\geq 1$, we may find $c(\eta)>0$ so that $\xonorm{\nsc u_{\leq c(\eta)}}{\infty}{2}<\eta.$ Thus Bernstein gives
	\begin{align*}
	\xonorm{\vert\nabla\vert^{1/2}&u_{>N}}{\infty}{2}
	\\ &\lesssim c(\eta)^{1/2-s_c}\xonorm{\nsc u_{>c(\eta)}}{\infty}{2}
	+N^{1/2-s_c}\xonorm{\nsc u_{N<\cdot\leq c(\eta)}}{\infty}{2}
	\\ &\lesssim_u c(\eta)^{1/2-s_c}+\eta N^{1/2-s_c}.
	\end{align*}
Choosing $N_1\ll \eta^{1/(s_c-1/2)}c(\eta)$, we recover \eqref{2highsmall}.

Finally, we note that \eqref{2low} is just a restatement of \eqref{eq:LTS2small}.
\end{proof}

We turn to the proof of Proposition~\ref{prop:flmor2}.

\begin{proof}[Proof of Proposition~\ref{prop:flmor2}] Throughout the proof, we take all spacetime norms over $I\times\R^3$.

We let $0<\eta\ll1$ and choose
	$$N<\min\{N_1(\eta), \eta^2 N_2(\eta^{2s_c})\},$$
where $N_1$ and $N_2$ are as in Lemma~\ref{lem:highlo}. In particular, we note that \eqref{2high} gives
	\begin{equation}\label{still small1}
	\xonorm{u_{>N/\eta^2}}{2}{6}\lesssim_u \eta N^{-s_c}(1+N^{2s_c-1}K_I)^{1/2}.
	\end{equation}
Moreover, as $N/\eta^2< N_2(\eta^{2s_c})$, we can apply \eqref{2low} to get
	\begin{equation}\label{still small2}
	\xonorm{\nsc u_{\leq N/\eta^2}}{2}{6}\lesssim_u \eta(1+N^{2s_c-1}K_I)^{1/2}.
	\end{equation}

We define the Morawetz action 
$$\text{Mor}(t)=2\,\Im\int_{\R^3}\frac{x}{\vert x\vert}\cdot\nabla u_{>N}(t,x)\bar{u}_{>N}(t,x)\,dx.$$
A standard computation using $(i\partial_t+\Delta)u_{>N}=P_{>N}\big(F(u)\big)$ gives
	\begin{equation}
	\nonumber
	\partial_t \text{Mor}(t)\gtrsim\int_{\R^3} \frac{x}{\vert x\vert}\cdot\{P_{>N}\big(F(u)\big),u_{>N}\}_P\,dx,
	\end{equation}
where the \emph{momentum bracket} $\{\cdot,\cdot\}_P$ is defined by $\{f,g\}_P:=\Re(f\nabla\bar{g}-g\nabla\bar{f}).$
Thus, by the fundamental theorem of calculus, we get
\begin{align}
\iint_{I\times\R^3}\frac{x}{\vert x\vert}\cdot\{P_{> N}\big(F(u)\big),u_{> N}\}_P\,dx\lesssim\norm{\text{Mor}}_{L_t^\infty(I)}.\label{flmor start}
\end{align}	
	
Noting that $\{F(u),u\}_P=-\tfrac{p}{p+2}\nabla(\vert u\vert^{p+2}),$ we may write
	\begin{align*}
	\{P_{>N}&\big(F(u)\big),u_{>N}\}_P
	\\&=\{F(u),u\}_P-\{F(u_{\leq N}),u_{\leq N}\}_P
	\\ &\quad-\{F(u)-F(u_{\leq N}),u_{\leq N}\}_P-\{P_{\leq N}\big(F(u)\big),u_{>N}\}_P
	\\ &=-\tfrac{p}{p+2}\nabla(\vert u\vert^{p+2}-\vert u_{\leq N}\vert^{p+2})
	-\{F(u)-F(u_{\leq N}),u_{\leq N}\}_P
	\\&\quad-\{P_{\leq N}\big(F(u)\big),u_{>N}\}_P
	\\&=:I+II+III.
	\end{align*}

Integrating by parts, we see that $I$ contributes to the left-hand side of \eqref{flmor start} a multiple of
	\begin{align*}
	\iint_{I\times\R^3}&\frac{\vert u_{>N}(t,x)\vert^{p+2}}{\vert x\vert}\,dx\,dt
	\end{align*}
and to the right-hand side of \eqref{flmor start} a multiple of
	\begin{equation}
	\xonorms{\tfrac{1}{\vert x\vert}(\vert u\vert^{p+2}-\vert u_{>N}\vert^{p+2}-\vert u_{\leq N}\vert^{p+2})}{1}. \label{i}
	\end{equation}
	
For term $II$, we use $\{f,g\}_P=\nabla \text\O(fg)+\text{\O}(f\nabla g)$. When the derivative hits the product, we integrate by parts. We find that $II$ contributes to the right-hand side of \eqref{flmor start} a multiple of
	\begin{align}
	&\xonorms{\tfrac{1}{\vert x\vert}u_{\leq N}[F(u)-F(u_{\leq N})]}{1} \label{ii}
	\\ &+\xonorms{\nabla u_{\leq N}[F(u)-F(u_{\leq N})]}{1}. \label{iii}
	\end{align}
	
Finally, for $III$, we integrate by parts when the derivative hits $u_{>N}.$ We find that $III$ contributes to the right-hand side of \eqref{flmor start} a multiple of
	\begin{align}
	&\xonorms{\tfrac{1}{\vert x\vert}u_{>N}P_{\leq N}\big(F(u)\big)}{1} \label{iv}
	\\&+\xonorms{u_{>N}\nabla P_{\leq N}\big(F(u)\big)}{1}. \label{v}
	\end{align}

Thus, continuing from \eqref{flmor start}, we see that to complete the proof of Proposition~\ref{prop:flmor2} it will suffice to show that
	\begin{equation}\label{btmq}
	\norm{\text{Mor}}_{L_t^\infty(I)}\lesssim_u \eta N^{1-2s_c}
	\end{equation}
and that the error terms \eqref{i} through \eqref{v} are acceptable, in the sense that they can be controlled by $\eta(N^{1-2s_c}+K_I).$ 

To prove \eqref{btmq}, we use Bernstein, \eqref{hardys}, \eqref{2highsmall} to estimate
	\begin{align*}
	\norm{\text{Mor}}_{L_t^\infty(I)}&\lesssim\xonorm{\vert\nabla\vert^{-1/2}\nabla u_{>N}}{\infty}{2}\xonorm{\vert\nabla\vert^{1/2}(\tfrac{x}{\vert x\vert}u_{>N})}{\infty}{2}
	\\ &\lesssim\xonorm{\vert\nabla\vert^{1/2}u_{>N}}{\infty}{2}^2
	\lesssim_u \eta N^{1-2s_c}.
	\end{align*}
	 
We next turn to the estimation of the error terms \eqref{i} through \eqref{v}.

For \eqref{i}, we first write
	\begin{align}
	\eqref{i}&\lesssim \xonorms{\tfrac{1}{\vert x\vert}(u_{\leq N})^{p+1}u_{>N}}{1}\label{ib1}
	\\ &\quad+\xonorms{\tfrac{1}{\vert x\vert}u_{\leq N}(u_{>N})^{p+1}}{1}.\label{ib2}
	\end{align}

For \eqref{ib1}, we use H\"older, Hardy, the chain rule, Bernstein, \eqref{a priori}, \eqref{2high}, and \eqref{2low} to estimate
	\begin{align*}
	\xonorms{\tfrac{1}{\vert x\vert}(u_{\leq N})^{p+1}u_{>N}}{1}
	&\lesssim\xonorm{\tfrac{1}{\vert x\vert}(u_{\leq N})^{p+1}}{2}{6/5}
		\xonorm{u_{>N}}{2}{6}
	\\ &\lesssim \xonorm{\nabla(u_{\leq N})^{p+1}}{2}{6/5}
		\xonorm{u_{>N}}{2}{6}
	\\ &\lesssim\xonorm{u}{\infty}{3p/2}^p\xonorm{\nabla u_{\leq N}}{2}{6}
	\xonorm{u_{>N}}{2}{6}
	\\ &\lesssim_u \eta N^{1-2s_c}(1+N^{2s_c-1}K_I),
	\end{align*}
which is acceptable.

For \eqref{ib2}, we consider two cases. If $\vert u_{\leq N}\vert\ll \vert u_{>N}\vert$, then we can absorb this term into the left-hand side of \eqref{flmor start}, provided we can show 
	\begin{equation}\label{is it finite}
	\xonorms{\tfrac{1}{\vert x\vert}\vert u_{>N}\vert^{p+2}}{1}<\infty.
	\end{equation}
Otherwise, we are back in the situation of \eqref{ib1}, which we have already handled. Thus, to render \eqref{ib2} an acceptable error term it suffices to establish \eqref{is it finite}. To this end, we use Hardy, Sobolev embedding, Bernstein, and Lemma~\ref{lem:sb} to estimate
	\begin{align*} 
	\xonorms{\tfrac{1}{\vert x\vert} \vert u_{>N}\vert^{p+2}}{1}
	&\lesssim \xonorms{\vert x\vert^{-\frac{1}{p+2}}u_{>N}}{p+2}^{p+2}
	\lesssim\xonorms{\vert\nabla\vert^{\frac{1}{p+2}}u_{>N}}{p+2}^{p+2}
	\\ &\lesssim\xonorm{\vert\nabla\vert^{\frac{3p-2}{2(p+2)}}u_{>N}}{p+2}{\frac{6(p+2)}{3p+2}}^{p+2}
	 \lesssim N^{1-2s_c}\xonorm{\vert\nabla\vert^{s_c} u}{p+2}{\frac{6(p+2)}{3p+2}}^{p+2}
	\\ &\lesssim_u N^{1-2s_c}(1+\smallint_I N(t)^{2}\,dt)<\infty.
	\end{align*}

We next turn to \eqref{ii}. Writing
	\begin{align*}
	\xonorms{\tfrac{1}{\vert x\vert}u_{\leq N}[F(u)-F(u_{\leq N})]}{1}
	&\lesssim\xonorms{\tfrac{1}{\vert x\vert}(u_{\leq N})^{p+1}u_{>N}}{1}
	\\&\quad+\xonorms{\tfrac{1}{\vert x\vert}u_{\leq N}(u_{>N})^{p+1}}{1},
	\end{align*}
we recognize the error terms that we just estimated, namely \eqref{ib1} and \eqref{ib2}. Thus \eqref{ii} is acceptable.

For \eqref{iii}, we use H\"older, \eqref{a priori} \eqref{2high}, and \eqref{2low} to estimate
	\begin{align*}
	\eqref{iii}&\lesssim\xonorm{\nabla u_{\leq N}}{2}{6}
	\xonorm{u_{>N}}{2}{6}
	\xonorm{u}{\infty}{3p/2}^p
	\\&\lesssim \eta N^{1-2s_c}(1+N^{2s_c-1}K_I),
	\end{align*}
which is acceptable.

Finally, for \eqref{iv} and \eqref{v}, we first use Hardy to estimate
	\begin{align*}
	\eqref{iv}&+\eqref{v}
	\\&\lesssim\xonorm{u_{>N}}{2}{6}\xonorm{\tfrac{1}{\vert x\vert} P_{\leq N}\big( F(u)\big)}{2}{6/5}+\xonorm{u_{>N}}{2}{6}\xonorm{\nabla P_{\leq N}\big(F(u)\big)}{2}{6/5}
	\\&\lesssim\xonorm{u_{>N}}{2}{6}\xonorm{\nabla P_{\leq N}\big(F(u)\big)}{2}{6/5}
	\end{align*}

Thus, in light of \eqref{2high} it suffices to prove
	\begin{equation}
	\nonumber
	\xonorm{\nabla P_{\leq N}\big(F(u)\big)}{2}{6/5}\lesssim_u \eta N^{1-s_c}(1+N^{2s_c-1}K_I)^{1/2}.
	\end{equation}

To this end, we use H\"older, Bernstein, the fractional chain rule, \eqref{a priori}, \eqref{still small1}, and \eqref{still small2} to estimate
	\begin{align*}
	\xonorm{\nabla& P_{\leq N}\big(F(u)\big)}{2}{6/5}
	\\ &\lesssim N\xonorm{F(u)-F(u_{\leq N/\eta^2})}{2}{6/5}+N^{1-s_c}\xonorm{\nsc F(u_{\leq N/\eta^2})}{2}{6/5}.
	\\ &\lesssim N\xonorm{u}{\infty}{3p/2}^p\xonorm{u_{>N/\eta^2}}{2}{6}
	+N^{1-s_c}\xonorm{u}{\infty}{3p/2}^p\xonorm{\nsc u_{\leq N/\eta^2}}{2}{6}
	\\ &\lesssim_u \eta N^{1-s_c}(1+N^{2s_c-1}K_I)^{1/2}.
	\end{align*}

This completes the proof of Proposition~\ref{prop:flmor2}.
\end{proof}


\subsection{The quasi-soliton scenario}\label{section:soliton2} In this section, we preclude the existence of solutions as in Theorem~\ref{thm:reduction1} with $3/4<s_c<1$ and
\begin{equation}\label{eq:infinite K2}
	K_{[0,T_{max})}=\int_0^{T_{max}}N(t)^{3-2s_c}\,dt=\infty.
\end{equation}

We rely on the frequency-localized Lin--Strauss Morawetz inequality established in Section~\ref{section:flmor2}. We also need the following lemma.

\begin{lemma}[Lower bound]\label{lem:mor lb2} Let $u:[0,T_{max})\times\R^3\to\C$ be an almost periodic solution as in Theorem~\ref{thm:reduction1} with $3/4<s_c<1$. Let $I\subset[0,T_{max}).$ Then there exists $N_0>0$ such that for any $N<N_0,$ we have
	\begin{equation}
	K_I\lesssim_u \iint_{I\times\R^3}\frac{\vert u_{>N}(t,x)\vert^{p+2}}{\vert x\vert}\,dx\,dt, 
	\label{eq:mor lb2}
	\end{equation}
where $K_I$ is as \eqref{def:K2}. 
\end{lemma}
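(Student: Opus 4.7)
The plan is to reduce the space-time integral to a sum over characteristic subintervals using the structure of solutions in Theorem~\ref{thm:reduction1}, and then on each subinterval combine the pointwise lower bound \eqref{lb3} from Lemma~\ref{lower bounds} with H\"older's inequality to convert an $L_x^2$ lower bound into the desired $L_x^{p+2}$ lower bound. Since $u$ is radial we have $x(t)\equiv 0$, and since $N(t)\geq 1$ we may apply \eqref{lb3}: there exist $C(u)>0$ and $N_0>0$ such that for all $N<N_0$ and all $t\in[0,T_{max})$,
\begin{equation}\label{eq:plan lb}
N(t)^{2s_c}\int_{|x|\leq C(u)/N(t)}|u_{>N}(t,x)|^2\,dx\gtrsim_u 1.
\end{equation}

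Next I would use H\"older with exponents $((p+2)/2,(p+2)/p)$ on the ball $B_t:=\{|x|\leq C(u)/N(t)\}$ to obtain
\begin{equation}\nonumber
\int_{B_t}|u_{>N}(t,x)|^2\,dx\lesssim_u N(t)^{-3p/(p+2)}\left(\int_{B_t}|u_{>N}(t,x)|^{p+2}\,dx\right)^{2/(p+2)}.
\end{equation}
Inserting this into \eqref{eq:plan lb} and raising to the power $(p+2)/2$, we get
\begin{equation}\nonumber
\int_{B_t}|u_{>N}(t,x)|^{p+2}\,dx\gtrsim_u N(t)^{3p/2-s_c(p+2)}=N(t)^{2-2s_c},
\end{equation}
where the last equality uses the scaling identity $s_c=\tfrac32-\tfrac{2}{p}$ (so $s_c p=\tfrac{3p}{2}-2$). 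Finally, on $B_t$ we have $1/|x|\gtrsim_u N(t)$, so
\begin{equation}\nonumber
\int_{B_t}\frac{|u_{>N}(t,x)|^{p+2}}{|x|}\,dx\gtrsim_u N(t)^{3-2s_c}.
\end{equation}

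Integrating over $I$ and recalling the definition \eqref{def:K2} of $K_I$ yields \eqref{eq:mor lb2} immediately. Equivalently, one may decompose $I=\bigcup_k J_k$ into characteristic subintervals on which $N(t)\equiv N_k$ and $|J_k|\sim_u N_k^{-2}$, obtaining
\begin{equation}\nonumber
\iint_{J_k\times\R^3}\frac{|u_{>N}(t,x)|^{p+2}}{|x|}\,dx\,dt\gtrsim_u N_k^{-2}\cdot N_k^{3-2s_c}=N_k^{1-2s_c},
\end{equation}
and summing over $J_k\subset I$ gives the claim. I do not anticipate any serious obstacle here; the argument is a direct exploitation of \eqref{lb3} together with the volume of the ball $|x|\leq C(u)/N(t)$. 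The only subtle point is verifying that the hypotheses of \eqref{lb3} apply, namely $x(t)\equiv 0$ (radiality, Remark~\ref{remark:radial}) and $\inf_{t\in I}N(t)\geq 1$ (built into Theorem~\ref{thm:reduction1}), and taking the threshold $N_0$ from \eqref{lb3} to produce the $N_0$ in the statement.
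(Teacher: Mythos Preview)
Your proposal is correct and follows essentially the same approach as the paper: both invoke \eqref{lb3} from Lemma~\ref{lower bounds}, use H\"older on the ball $\{|x|\leq C(u)/N(t)\}$ to pass from the $L_x^2$ lower bound to an $L_x^{p+2}$ lower bound, exploit $1/|x|\gtrsim_u N(t)$ on that ball, and integrate in time to recover $K_I$. The only differences are cosmetic (the paper applies the $1/|x|$ bound before H\"older rather than after, and writes the exponent as $3+ps_c$ instead of $1+3p/2$), and your verification of the hypotheses for \eqref{lb3} is exactly what is needed.
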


\begin{proof}
We choose $C(u)$ and $N_0$ as in Lemma~\ref{lower bounds}. Then for $N<N_0$, we use H\"older and \eqref{lb3} to estimate
	\begin{align*}
	\iint_{I\times\R^3}\frac{\vert u_{> N}(t,x)\vert^{p+2}}{\vert x\vert}\,dx\,dt&\gtrsim_u \int_I N(t)\int_{\vert x\vert\leq\frac{C(u)}{N(t)}}\vert u_{> N}(t,x)\vert^{p+2}\,dx\,dt
	\\ &\gtrsim_u\int_I N(t)^{1+\frac{3p}{2}}\bigg(\int_{\vert x\vert\leq\frac{C(u)}{N(t)}}\vert u_{> N}(t,x)\vert^2\,dx\bigg)^{\frac{p+2}{2}}\,dt
	\\ &\gtrsim_u \int_I N(t)^{3+ps_c}\big(N(t)^{-2s_c}\big)^{\frac{p+2}{2}}\,dt
	\gtrsim_u K_I. 
	\end{align*}		\end{proof}

Finally, we prove the following.

\begin{theorem}[No quasi-solitons]\label{thm:soliton2} There are no almost periodic solutions as in Theorem~\ref{thm:reduction1} with $3/4<s_c<1$ such that \eqref{eq:infinite K2} holds.
\end{theorem}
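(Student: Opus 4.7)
The plan is to combine the frequency-localized Lin--Strauss Morawetz inequality of Proposition~\ref{prop:flmor2} with the lower bound of Lemma~\ref{lem:mor lb2} to obtain a uniform upper bound on $K_I$ over all compact $I \subset [0, T_{max})$, contradicting \eqref{eq:infinite K2}.

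Suppose for contradiction that $u : [0, T_{max}) \times \R^3 \to \C$ is such a solution. Fix a compact interval $I \subset [0, T_{max})$ which is a contiguous union of characteristic subintervals $J_k$. Let $\eta > 0$ be a small parameter to be chosen. By Proposition~\ref{prop:flmor2} there exists $N_0 = N_0(\eta) > 0$ such that for every $N < N_0$,
\begin{equation*}
\iint_{I \times \R^3} \frac{|u_{>N}(t,x)|^{p+2}}{|x|}\,dx\,dt \leq C_u\, \eta\, \big(N^{1-2s_c} + K_I\big).
\end{equation*}
On the other hand, by Lemma~\ref{lem:mor lb2} there exists $N_1 > 0$ (independent of $\eta$) such that for every $N < N_1$,
\begin{equation*}
K_I \leq \tilde{C}_u \iint_{I \times \R^3} \frac{|u_{>N}(t,x)|^{p+2}}{|x|}\,dx\,dt.
\end{equation*}
Choosing any $N < \min\{N_0(\eta), N_1\}$ and chaining the two inequalities yields
\begin{equation*}
K_I \leq \tilde{C}_u C_u\, \eta\, N^{1-2s_c} + \tilde{C}_u C_u\, \eta\, K_I.
\end{equation*}

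Now select $\eta$ small enough that $\tilde{C}_u C_u\, \eta \leq \tfrac{1}{2}$; this choice depends only on $u$. With $\eta$ fixed, fix any $N < \min\{N_0(\eta), N_1\}$. Absorbing the $\eta K_I$ term into the left-hand side gives the uniform bound
\begin{equation*}
K_I \leq 2\tilde{C}_u C_u\, \eta\, N^{1-2s_c} =: M(u),
\end{equation*}
which holds for every compact $I \subset [0, T_{max})$ that is a union of characteristic subintervals. Taking an increasing exhaustion of $[0, T_{max})$ by such intervals forces $K_{[0,T_{max})} \leq M(u) < \infty$, in direct contradiction with \eqref{eq:infinite K2}. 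The only nontrivial point in the argument is the absorption step, which requires that the $\eta$ from Proposition~\ref{prop:flmor2} appears in front of \emph{both} the $N^{1-2s_c}$ term and the $K_I$ term; this is precisely what the frequency-localized Morawetz estimate provides, and it is why the work in Section~\ref{section:flmor2} was needed.
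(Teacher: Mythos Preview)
Your proof is correct and follows essentially the same approach as the paper: combine Proposition~\ref{prop:flmor2} and Lemma~\ref{lem:mor lb2} to obtain $K_I \lesssim_u \eta(N^{1-2s_c}+K_I)$, absorb the $\eta K_I$ term for $\eta$ small, and contradict \eqref{eq:infinite K2} by exhausting $[0,T_{max})$. Your write-up is in fact a bit more careful than the paper's in making the constants and the absorption explicit.
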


\begin{proof} Suppose $u$ were such a solution. Let $\eta>0$ and let $I\subset[0,\infty)$ be a compact time interval, which is a contiguous union of characteristic subintervals.

Combining \eqref{eq:flmor2} and \eqref{eq:mor lb2}, we find that for $N$ sufficiently small, we have
		$$K_I\lesssim_u \eta(N^{1-2s_c}+K_I).$$
Choosing $\eta$ sufficiently small, we deduce $K_I\lesssim_u N^{1-2s_c}$ uniformly in $I$. We now contradict \eqref{eq:infinite K2} by taking $I$ sufficiently large inside of $[0,T_{max}).$ This completes the proof of Theorem~\ref{thm:soliton2}. 
\end{proof}


			\section{The case $0<s_c<1/2$}\label{section:case three}
In this section, we rule out the existence of solutions as in Theorem~\ref{thm:reduction2}, in which case $0<s_c<1/2$. As in Section~\ref{section:case two}, the main technical tool we use is a long-time Strichartz estimate, Proposition~\ref{prop:LTS}, which we prove in the next subsection. In Section~\ref{section:rfc}, we rule out the rapid frequency-cascade scenario; we show that such solutions are inconsistent with the conservation of energy. In Section~\ref{section:flmor}, we prove a frequency-localized Lin--Strauss Morawetz inequality, which we use in Section~\ref{section:soliton} to rule out the quasi-soliton scenario.

			\subsection{Long-time Strichartz estimates}\label{section:lts}
In this section, we establish a long-time Strichartz estimate adapted to the Lin--Strauss Morawetz inequality for almost periodic solutions as in Theorem~\ref{thm:reduction2}. A key ingredient in the proof is the bilinear Strichartz estimate, Proposition~\ref{lem:bilinear}. 

We work under the assumption
			\begin{equation}
			\label{eq:input}
			u\in L_t^\infty\dot{H}_x^{s}([0,\infty)\times\R^3)
			\end{equation}
for some $s\geq s_c$. We know from \eqref{a priori} that \eqref{eq:input} holds for $s=s_c$. In Section~\ref{section:rfc} we will show that rapid frequency-cascade solutions actually satisfy \eqref{eq:input} for $s>s_c$. 

	Throughout Section~\ref{section:case three}, we make use of the following notation:
			\begin{equation}
			\label{def:A}
			A_I(N):=\xnorm{u_{>N}}{2}{6}{I},
			\end{equation}
			
			\begin{equation}
			\label{def:K}
			K_I:=\int_I N(t)^{3-2s_c}\,dt\sim_u \sum_{J_k\subset I} N_k^{1-2s_c}.
			\end{equation}
	
	The main result of this section is the following.	
			
\begin{proposition}[Long-time Strichartz estimate]\label{prop:LTS}
Let $u:[0,\infty)\times\R^3\to\C$ be an almost periodic solution as in Theorem~\ref{thm:reduction2}. Let $I\subset[0,\infty)$ be a compact time interval, which is a contiguous union of characteristic subintervals $J_k$. Suppose \eqref{eq:input} holds for some $s_c\leq s<3/2+s_c$. Then for any $N>0$, we have
	\begin{equation}\label{eq:LTS}
	A_I(N)\lesssim_u N^{-s_c}+N^{-\sigma(s)}K_I^{1/2},
	\end{equation}
where $\sigma(s):=1/2+s-s_c$.
	
In particular, using \eqref{a priori}, we have
	\begin{equation}\label{eq:LTSsc}
	A_I(N)\lesssim_u N^{-s_c}+N^{-1/2}K_I^{1/2}.
	\end{equation}
Moreover, for any $\eps>0$, there exists $N_0(\eps)>0$ so that for $N\geq N_0$, 
	\begin{equation}\label{eq:LTS small}
	A_I(N)\lesssim_u \eps\big( N^{-s_c}+N^{-1/2}K_I^{1/2}\big).
	\end{equation} 
\end{proposition}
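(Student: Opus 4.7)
The plan parallels the proof of Proposition~\ref{prop:LTS2}: downward induction on $N$, with a recurrence lemma handling the inductive step. The roles of the low and high frequencies are exchanged, however: now $A_I(N)=\|u_{>N}\|_{L_t^2 L_x^6}$ is a high-frequency quantity, and since $\sup_{t\in I} N(t)\le 1$, almost periodicity tells us that $A_I(N)$ is small for large $N$. For the base case I would take $N>N_\ast(u)$ sufficiently large that $\||\nabla|^{s_c}u_{>N}\|_{L_t^\infty L_x^2}$ is small, and then a standard Strichartz bootstrap on each characteristic subinterval $J_k$ (of the sort already used in the proof of Corollary~\ref{cor:bilinear}) yields $A_I(N)\lesssim_u N^{-s_c}$, so \eqref{eq:LTS} holds at frequencies above $N_\ast$ for any sufficiently large choice of the constant $C_u$.

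For the inductive step, I would prove a recurrence lemma analogous to Lemma~\ref{lem:recurrence2}. Applying Strichartz with $t_0\in I$ chosen to minimize $\|u_{>N}(t_0)\|_{L_x^2}$ gives
\begin{equation*}
A_I(N)\lesssim \inf_{t_0\in I}\|u_{>N}(t_0)\|_{L_x^2}+\|P_{>N}F(u)\|_{L_t^2 L_x^{6/5}(I)},
\end{equation*}
and Bernstein together with \eqref{eq:input} bounds the first term by $N^{-s}\le N^{-s_c}$. For the nonlinearity I would split $F(u)=F(u_{>N/\eta})+[F(u)-F(u_{>N/\eta})]$ for small $\eta>0$. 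The first piece is purely high-frequency; H\"older together with Sobolev embedding $\dot H^{s_c}\hookrightarrow L^{3p/2}$ and \eqref{a priori} controls it by a multiple of $\|u_{>N/\eta}\|_{L_t^\infty L_x^{3p/2}}^{p}\,A_I(N/\eta)$, and the $L_t^\infty L_x^{3p/2}$ tail decays like $\eta^{s_c}$ by almost periodicity, so the inductive hypothesis at frequency $N/\eta>N$ absorbs it back into the target bound. The second piece always carries at least one factor of $u_{\le N/\eta}$; here I would apply the bilinear Strichartz estimate (Corollary~\ref{cor:bilinear}) to pair the dominant low-frequency piece $u_{\le c(\eta)N(t)}$ (valid on each $J_k$ by almost periodicity) with a high-frequency factor $u_{>N'}$, while the remaining $p-1$ copies of $u$ are placed in $L_t^\infty L_x^{3p/2}$ via Sobolev embedding and \eqref{a priori}.

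The algebraic heart of the argument is that on each $J_k$ Corollary~\ref{cor:bilinear} yields
\begin{equation*}
\|u_{\le c(\eta)N_k}\,u_{>N'}\|_{L^2_{t,x}(J_k)}^2\lesssim_u c(\eta)^{2(1-s_c)}\,N_k^{2(1-s_c)}\,N'^{-1-2s},
\end{equation*}
and since $N_k\le 1$ with $2(1-s_c)\ge 1-2s_c$, we have $N_k^{2(1-s_c)}\le N_k^{1-2s_c}$; $\ell^2$-summation over $J_k\subset I$ therefore yields precisely $\sum_k N_k^{1-2s_c}\sim_u K_I$. Choosing $N'\sim N$ and tracking powers of $N$ then produces the desired contribution $N^{-\sigma(s)}K_I^{1/2}$, since $\sigma(s)=\tfrac12+s-s_c$ matches exactly the $N'^{-1/2-s}$ arising from the bilinear bound (after accounting for the $N^{s_c}$ spent in Bernstein to pass between the $L_{t,x}^2$ and $L_t^2 L_x^{6/5}$ scales).

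The main obstacle is arranging the frequency decomposition and H\"older bookkeeping so that precisely $K_I$---rather than the potentially much larger $\int_I N(t)^2\,dt$---emerges after summation; the trick is to always pair $u_{>N'}$ with a low-frequency factor cut off at the solution's own characteristic scale $c(\eta)N(t)$, which together with $N(t)\le 1$ converts the bilinear output $N_k^{2(1-s_c)}$ into the summable tail $N_k^{1-2s_c}$. With the recurrence lemma in hand, the induction closes exactly as in Proposition~\ref{prop:LTS2}: choose $\eta$ small depending on the implicit constant and enlarge $C_u$ to subsume the $C_\eta$-dependent pieces. The specialization $s=s_c$ recovers \eqref{eq:LTSsc}, and \eqref{eq:LTS small} is obtained by reinspecting the initial-data term in Strichartz once the base-case iteration is closed: almost periodicity combined with $\sup_{t}N(t)\le 1$ forces $\|u_{>N}(t)\|_{\dot H^{s_c}}\to 0$ as $N\to\infty$ uniformly in $t\in I$, so that both the $N^{-s_c}$ and $N^{-\sigma(s)}K_I^{1/2}$ contributions in the final bound acquire an arbitrarily small prefactor.
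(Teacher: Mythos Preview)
Your induction runs in the wrong direction, and the base case you propose is actually false. You claim that for $N>N_\ast(u)$ a Strichartz bootstrap on each $J_k$ yields $A_I(N)\lesssim_u N^{-s_c}$ uniformly in $I$; but the bootstrap only gives $\|u_{>N}\|_{L_t^2L_x^6(J_k)}\lesssim_u N^{-s_c}$, and squaring and summing over the characteristic subintervals produces $A_I(N)^2\lesssim_u N^{-2s_c}\sum_k 1\sim_u N^{-2s_c}\int_I N(t)^2\,dt$, which blows up with $|I|$. There is no way to get $A_I(N)\lesssim_u N^{-s_c}$ independently of $I$ for a long interval---indeed, the entire content of the proposition is that $A_I(N)$ carries a $K_I^{1/2}$ contribution. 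The paper instead takes the base case at \emph{small} $N$ (namely $N\le\inf_{t\in I}N(t)$), where Bernstein plus Lemma~\ref{lem:sb} give $A_I(N)^2\lesssim_u N^{-2s_c}+N^{-2s_c}\int_I N(t)^2\,dt\lesssim_u N^{-2s_c}+N^{-2\sigma(s)}K_I$, exploiting $N\le N(t)\le 1$ to convert the $\int N(t)^2$ into $K_I$. The induction then runs \emph{upward}, and the recurrence lemma (Lemma~\ref{lem:recurrence}) decomposes $F(u)=F(u_{\le\eta N})+[F(u)-F(u_{\le\eta N})]$, with Bernstein on $P_{>N}F(u_{\le\eta N})$ producing the recursion $\sum_{M\le\eta N}(M/N)^2A_I(M)$ at \emph{lower} frequencies. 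Your mis-analogy with Proposition~\ref{prop:LTS2} is that there $A_I(N)=\|\nsc u_{\le N}\|_{L_t^2L_x^6}$ is a low-frequency quantity, so large $N$ captures the whole solution; here $A_I(N)=\|u_{>N}\|_{L_t^2L_x^6}$ is a high-frequency quantity, and it is small $N$ that captures everything.

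There is also a gap in your bilinear step. Placing the bilinear factor $u_{\le c(\eta)N_k}\,u_{>N'}$ in $L^2_{t,x}$ and the remaining $p-1$ copies of $u$ in $L_t^\infty L_x^{3p/2}$ does not close in $L_t^2L_x^{6/5}$: H\"older would force those $p-1$ copies into $L_x^{3(p-1)}$, which lies strictly below $L_x^{3p/2}$ for $p<2$ and is not controlled by $\dot H^{s_c}$. Your remark about ``$N^{s_c}$ spent in Bernstein to pass between the $L^2_{t,x}$ and $L_t^2L_x^{6/5}$ scales'' cannot repair this, since Bernstein does not map $L^2$ into $L^{6/5}$. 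The paper resolves this by inserting a \emph{spatial} cutoff $\chi_{C(\eta)/N(t)}$ (furnishing extra integrability from its finite support) and then using a fractional-power H\"older splitting that places the bilinear factor in $L^2_{t,x}$ only to the power $1-2s_c$, combined with $\|u_{>\eta N}\|_{L_t^\infty L_x^2}^{2s_c}$ and a Strichartz factor $\|u_{\le C(\eta)N_k}\|_{L_t^qL_x^{r_0}}^{p-1+2s_c}$ for carefully chosen exponents. Your simple inequality $N_k^{2(1-s_c)}\le N_k^{1-2s_c}$ is not enough by itself; one also needs the spatial localization to make the exponents fit.
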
			

We will prove Proposition~\ref{prop:LTS} by induction. The inductive step will rely on the following lemma.

\begin{lemma}\label{lem:recurrence} Let $\eta>0$ and  $u,$ $I,$ $s$, $\sigma$ be as above. For any $N>0$, we have
			\begin{align*}
			\xnorm{P_{>N}\big(F(u)\big)}{2}{6/5}{I}&\lesssim_u C_{\eta}\sup_{J_k\subset I}\norm{u_{>\eta N}}_{L_t^\infty\dot{H}_x^s(J_k\times\R^3)}^{2s_c}N^{-\sigma(s)}K_I^{1/2} 
		\\ &\quad\quad\quad+
			\sum_{M\leq \eta N}(\tfrac{M}{N})^{2}A_I(M),
			\end{align*}

\end{lemma}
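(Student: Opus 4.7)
The plan is to mimic the strategy of Lemma~\ref{lem:recurrence2} but with the roles of low and high frequencies reversed, since in the regime $0<s_c<1/2$ we care about what happens at high frequencies relative to the cutoff $\eta N$. I would start from the decomposition
\[
F(u) = F(u_{>\eta N}) + \bigl[F(u)-F(u_{>\eta N})\bigr],
\]
use the pointwise bound $|F(u)-F(u_{>\eta N})|\lesssim |u_{\leq\eta N}|\,|u|^p$, and treat the two contributions separately. The sum $\sum_{M\leq\eta N}(M/N)^2 A_I(M)$ should come from the low-frequency term, while the $N^{-\sigma(s)}K_I^{1/2}\|u_{>\eta N}\|_{L_t^\infty \dot H_x^s}^{2s_c}$ piece should come from $F(u_{>\eta N})$ via the bilinear Strichartz estimate of Corollary~\ref{cor:bilinear}.

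For the low-frequency-driven contribution $P_{>N}[u_{\leq\eta N}\,u^p]$, I would expand $u_{\leq\eta N}=\sum_{M\leq\eta N}u_M$ dyadically. Since each $u_M$ has Fourier support at scale $M\ll N$, the frequency mismatch with $P_{>N}$ produces, via a second-order Bernstein/commutator expansion (equivalently Taylor-expanding the kernel of $P_{>N}$ against $u_M$), a gain of $(M/N)^2$. The residual factor involves $u^p$ paired with a high-frequency projection $P_{>N/2}$-type term; applying H\"older with $\|u\|_{L_t^\infty L_x^{3p/2}}^{p-1}\lesssim_u 1$ (from \eqref{a priori} and Sobolev) and recognizing the surviving high-frequency tail as $A_I(M)$ gives exactly the claimed $(M/N)^2 A_I(M)$, and dyadic summation recovers the second term.

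For the high-frequency-driven contribution $P_{>N}F(u_{>\eta N})$, I would work on each characteristic subinterval $J_k$ separately and exploit Corollary~\ref{cor:bilinear}. Schematically $F(u_{>\eta N})$ is a product of $p+1$ factors of $u_{>\eta N}$: two of them are paired in $L_{t,x}^2$ through a paraproduct decomposition and the bilinear Strichartz estimate, producing a per-subinterval gain of the form $N_k^{1-s_c}(\eta N)^{-1/2-s}$, while the remaining $p-1$ factors are placed in $L_t^\infty L_x^{3(p-1)}$ and bounded by $\|u_{>\eta N}\|_{L_t^\infty\dot H_x^s}^{p-1}$ using Sobolev embedding (supplemented by Lemma~\ref{lem:paraproduct} when the relevant Sobolev exponent $\frac{3p-5}{2(p-1)}$ is negative). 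Since $p-1>2s_c$ throughout the range $4/3<p<2$, I would then write $\|u_{>\eta N}\|_{L^\infty\dot H^s}^{p-1}=\|u_{>\eta N}\|_{L^\infty\dot H^s}^{2s_c}\cdot\|u_{>\eta N}\|_{L^\infty\dot H^s}^{p-1-2s_c}$ and absorb the second factor into the implicit constant using \eqref{eq:input}. Finally, squaring and summing over $J_k\subset I$ and invoking $\sum_{J_k\subset I}N_k^{1-2s_c}\sim_u K_I$ from \eqref{def:K} converts the per-subinterval estimate into $C_\eta N^{-\sigma(s)}K_I^{1/2}\sup_k\|u_{>\eta N}\|_{L_t^\infty\dot H_x^s(J_k)}^{2s_c}$, as required.

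The main obstacle is the small-power nonlinearity: with $p<2$, pointwise derivative estimates of $F$ are not available, and the natural Sobolev exponent for placing $u^{p-1}$ in $L_x^{3(p-1)}$ becomes negative for $p<5/3$, so the argument must be routed through the paraproduct estimate Lemma~\ref{lem:paraproduct} rather than the fractional chain rule. A secondary delicate point is to identify on each $J_k$ exactly which low-frequency factor ($u_{\leq N_k}$, say, since $N(t)\equiv N_k\leq 1$ there) is paired against $u_{>\eta N}$ in Corollary~\ref{cor:bilinear}, so that the prefactor $N_k^{1-s_c}$ combines with $|J_k|\sim N_k^{-2}$ after summation to recover precisely $\sum N_k^{1-2s_c}\sim K_I$ rather than a divergent or over-weighted series.
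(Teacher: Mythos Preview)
Your decomposition is reversed, and this breaks both halves of the argument. The paper splits $F(u)=F(u_{\leq\eta N})+[F(u)-F(u_{\leq\eta N})]$, not $F(u_{>\eta N})+[F(u)-F(u_{>\eta N})]$. The reason is that the $(M/N)^2$ gain comes from Bernstein on $P_{>N}$ together with $\Delta$ landing on a low-frequency factor, and this requires \emph{all} factors to be at low frequency: $\|P_{>N}F(u_{\leq\eta N})\|\lesssim N^{-2}\|\Delta F(u_{\leq\eta N})\|$ followed by the chain rule and $\Delta u_M\sim M^2$. Your proposed piece $P_{>N}[u_{\leq\eta N}\,u^p]$ carries no such gain, because $u^p$ has unrestricted Fourier support and the product can land above frequency $N$ with no penalty; there is no ``commutator expansion'' or kernel Taylor expansion that produces $(M/N)^2$ here, and the residual high-frequency piece you describe has nothing to do with $A_I(M)=\|u_{>M}\|_{L_t^2L_x^6}$.

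The bilinear piece is also misplaced. Corollary~\ref{cor:bilinear} controls $\|u_{\leq M}u_{>N}\|_{L_{t,x}^2}$ and requires a genuine low$\times$high pairing; in $F(u_{>\eta N})$ every factor is $u_{>\eta N}$, so there is no low-frequency factor to produce the $N_k^{1-s_c}$ you need. In the paper the bilinear estimate is applied inside $F(u)-F(u_{\leq\eta N})=\text{\O}(u_{>\eta N}\,u^p)$, where the remaining $u^p$ is split into $(u_{\leq C(\eta)N_k})^p$ and $(u_{>C(\eta)N_k})^p$; the first provides the low-frequency partner for $u_{>\eta N}$. Moreover, the paper needs an additional \emph{spatial} localization $\chi_{C(\eta)/N_k}$ (with the complementary piece disposed of by almost periodicity), and the H\"older factor $\|\chi_{C(\eta)/N_k}\|_{L_x^{6/(1-2s_c)^2}}$ contributes an essential power $N_k^{-(1-2s_c)^2/2}$ without which the $N_k$-exponent does not collapse to $1/2-s_c$. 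None of this structure is visible in your sketch.
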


\begin{proof}[Proof of Lemma~\ref{lem:recurrence}]
	Throughout the proof, all spacetime norms will be taken over $I\times\R^3$ unless stated otherwise.

	We begin by writing $F(u)=F(u_{\leq\eta N})+F(u)-F(u_{\leq\eta N}).$ We use Bernstein, the chain rule, Sobolev embedding, and \eqref{a priori} estimate
	\begin{align}
	\xonorm{P_{>N}F(u_{\leq\eta N})}{2}{6/5}
		&\lesssim N^{-2}\xonorm{\Delta F(u_{\leq\eta N})}{2}{6/5} \nonumber
		\\ &\lesssim N^{-2}\xonorm{\nsc u}{\infty}{2}^p\sum_{M\leq \eta N}\xonorm{\Delta u_M}{2}{6} \nonumber
		\\ &\lesssim_u \sum_{M\leq \eta N} (\tfrac{M}{N})^{2}A_I(M).\label{very low bound}
	\end{align}
	
	Next, we use almost periodicity to choose $C(\eta)$ large enough that
	\begin{equation}\label{Ceta}
	\xnorm{\nsc u_{>C(\eta)N(t)}}{\infty}{2}{I}<\eta^{2}.
	\end{equation}
By almost periodicity and the embedding $\dot{H}_x^{s_c}\hookrightarrow L_x^{3p/2}$ we may choose $C(\eta)$ possibly even larger to guarantee
	\begin{equation}\label{Ceta2}
	\xnorm{(1-\chi_{\frac{C(\eta)}{N(t)}})u_{\leq C(\eta)N(t)}}{\infty}{3p/2}{I}<\eta^{2},
	\end{equation}
where $\chi_R$ denotes the characteristic function of $\{\vert x\vert\leq R\}$. 

We now write $$F(u)-F(u_{\leq\eta N})\lesssim u_{>\eta N}\text{\O}\big\{(u_{\leq C(\eta)N(t)})^p+(u_{>C(\eta)N(t)})^p\big\},$$
so that 
	\begin{align}
	\xonorm{P_{>N}\big(F(u)-F(u_{\leq\eta N})\big)}{2}{6/5}
	&\lesssim \xonorm{u_{>\eta N}(u_{>C(\eta)N(t)})^p}{2}{6/5}\label{bootstrap1}
	\\ &+\xonorm{(1-\chi_{\frac{C(\eta)}{N(t)}})u_{>\eta N}(u_{\leq C(\eta)N(t)})^p}{2}{6/5}\label{bootstrap2}
	\\ &+\xonorm{\chi_{\frac{C(\eta)}{N(t)}}u_{>\eta N}(u_{\leq C(\eta)N(t)})^p}{2}{6/5}. \label{main contribution}
	\end{align} 
	
	Using H\"older, \eqref{a priori}, and \eqref{Ceta}, we estimate the contribution of \eqref{bootstrap1} as follows:	
	\begin{align}
	\xonorm{u_{>\eta N}(u_{>C(\eta)N(t)})^p}{2}{6/5}&
	\lesssim \xonorm{u_{>\eta N}}{2}{6}\xonorm{u_{>C(\eta)N(t)}}{\infty}{3p/2}^p\lesssim \eta^{2} A_I(\eta N).\label{bootstrap1 bound}
	\end{align}

	Similarly, we estimate the contribution of \eqref{bootstrap2} as follows:
	\begin{align}
	\xonorm{(1&-\chi_{\frac{C(\eta)}{N(t)}})u_{>\eta N}(u_{\leq C(\eta)N(t)})^p}{2}{6/5}\nonumber
	\\&\lesssim \xonorm{(1-\chi_{\frac{C(\eta)}{N(t)}})u_{\leq C(\eta)N(t)}}{\infty}{3p/2}\xonorm{u}{\infty}{3p/2}^{p-1}\xonorm{u_{>\eta N}}{2}{6}\nonumber
	\\&\lesssim_u \eta^{2} A_I(\eta N).\label{bootstrap2 bound}
	\end{align}
	
	Finally, we estimate the contribution of \eqref{main contribution}. We first restrict our attention to a single characteristic subinterval $J_k$. We define the following exponents:
$$q=\tfrac{2(p^2+2p-4)}{3p-4},\quad r_0=\tfrac{3p(p^2+2p-4)}{8p-p^2-8},\quad r=\tfrac{6(p^2+2p-4)}{3p^2-4}.$$ 
Note that as $4/3<p<2$, we have $4<q<\infty$, $2<r_0<6$, and $2<r<3$. We also note that we have the embedding $\dot{H}_x^{s_c,r}\hookrightarrow L_x^{r_0}$ and that $(q,r)$ is an admissible pair.
	
 With all spacetime norms over $J_k\times\R^3$, we use H\"older, the bilinear Strichartz estimate (Corollary~\ref{cor:bilinear}), Sobolev embedding, Lemma~\ref{lem:sb}, \eqref{a priori}, and \eqref{eq:input} to estimate
	\begin{align}
	&\xonorm{\chi_{\frac{C(\eta)}{N_k}}u_{>\eta N}(u_{\leq C(\eta)N_k})^p}{2}{6/5}\nonumber
	\\ &\lesssim \norm{\chi_{\frac{C(\eta)}{N_k}}}_{L_x^{\frac{6}{(1-2s_c)^2}}}\nonumber
		\xonorms{u_{>\eta N}u_{\leq C(\eta)N_k}}{2}^{1-2s_c}
		\xonorm{u_{>\eta N}}{\infty}{2}^{2s_c} \xonorm{u_{\leq C(\eta)N_k}}{q}{r_0}^{p-1+2s_c}
	\\ &\lesssim_u C_\eta N_k^{-(1-2s_c)^2/2}\big[N_k^{1-s_c}N^{-1/2-s}\big]^{1-2s_c}	\nonumber
	 N^{-2s\cdot s_c}\norm{u_{>\eta N}}_{L_t^\infty\dot{H}_x^s}^{2s_c}\xonorm{\nsc u}{q}{r}^{p-1+2s_c} \nonumber
	\\ &\lesssim_u C_\eta N_k^{1/2-s_c} N^{-(1/2+s-s_c)}\norm{u_{>\eta N}}_{L_t^\infty\dot{H}_x^s}^{2s_c}. \nonumber
	\end{align}
Summing over $J_k\subset I$ and using \eqref{def:K}, we find
	\begin{align}
	\xonorm{\chi_{\frac{C(\eta)}{N(t)}}u_{>\eta N}(u_{\leq C(\eta)N(t)})^p}{2}{6/5}\lesssim_u C_{\eta}\sup_{J_k\subset I}\norm{u_{>\eta N}}_{L_t^\infty\dot{H}_x^s(J_k\times\R^3)}^{2s_c}N^{-\sigma(s)}K_I^{1/2}.\label{main contribution bound}
	\end{align}

Adding the estimates \eqref{very low bound}, \eqref{bootstrap1 bound}, \eqref{bootstrap2 bound}, and \eqref{main contribution bound}, we complete the proof of Lemma~\ref{lem:recurrence}.
\end{proof}

We turn to the proof of Proposition~\ref{prop:LTS}.

\begin{proof}[Proof of Proposition~\ref{prop:LTS}]
We proceed by induction. For the base case, we let $N\leq \inf_{t\in I}N(t)\leq 1,$ so that $ N^{-2(s-s_c)}\big(\tfrac{N(t)}{N}\big)^{1-2s_c}\geq 1$ for $t\in I$. We use Bernstein and Lemma~\ref{lem:sb} to estimate
	\begin{align*}
	A_I(N)^2&\lesssim N^{-2s_c}\xnorm{\nsc u_{>N}}{2}{6}{I}^2
	\lesssim_u N^{-2s_c}+N^{-2s_c}\smallint_I N(t)^2\,dt
	\\ &\lesssim_u N^{-2s_c}+N^{-1-2(s-s_c)}K_I.
	\end{align*}
Thus for $N\leq \inf_{t\in I}N(t),$ we have
	\begin{equation}
	\label{base case}
	A_I(N)\leq C_u\big[N^{-s_c}+N^{-\sigma(s)}K_I^{1/2}\big].
	\end{equation}
This inequality remains true if we replace $C_u$ by any larger constant.
	
We now suppose that \eqref{base case} holds at frequencies $\leq N/2$; we will use Lemma~\ref{lem:recurrence} to show that it holds at frequency $N$. 

Applying Strichartz, Bernstein, Lemma~\ref{lem:recurrence}, \eqref{a priori}, and \eqref{eq:input},  we find
	\begin{align}
	A_I(N)\nonumber
	&\leq \tilde{C}_u\big[N^{-s_c}\inf_{t\in I}\norm{u_{>N}(t)}_{\dot{H}_x^{s_c}}+C_\eta \sup_{J_k\subset I}\norm{u_{>\eta N}}_{L_t^\infty\dot{H}_x^s(J_k\times\R^3)}^{2s_c} N^{-\sigma(s)}K_I^{1/2}\nonumber
	\\ &\quad\quad\quad+\sum_{M\leq\eta N}(\tfrac{M}{N})^{2}A_I(M)\big] \label{for smallness}
	\\ &\leq \tilde{C}_u\big[N^{-s_c}+C_\eta N^{-\sigma(s)}K_I^{1/2}+\sum_{M\leq \eta N}(\tfrac{M}{N})^{2}A_I(M)\big].\nonumber
	\end{align}
We now let $\eta<1/2$ and note that $s<3/2+s_c$ gives $\sigma(s)<2$. Thus, using the inductive hypothesis, we find
	\begin{align*}
	A_I(N)
	&\leq \tilde{C}_u\big[N^{-s_c}+C_\eta N^{-\sigma(s)} K_I^{1/2}+\sum_{M\leq\eta N}(\tfrac{M}{N})^{2}(C_uM^{-s_c}+C_uM^{-\sigma(s)}K_I^{1/2})\big]
	\\ &\leq \tilde{C}_u\big[N^{-s_c}+C_\eta N^{-\sigma(s)}K_I^{1/2}\big]+C_u\tilde{C}_u\big[\eta^{2-s_c} N^{-s_c}+\eta^{2-\sigma(s)}N^{-\sigma(s)}K_I^{1/2}\big].
	\end{align*}
If we now choose $\eta$ sufficiently small depending on $\tilde{C}_u$, we get
	$$A_I(N)\leq\tilde{C}_u(N^{-s_c}+C_\eta N^{-\sigma(s)}K_I^{1/2})+\tfrac12C_u(N^{-s_c}+N^{-\sigma(s)}K_I^{1/2}).$$ 
Finally, if we choose $C_u$ possibly larger so that $C_u\geq 2(1+C_\eta)\tilde{C}_u$, then the above inequality implies
	$$A_I(N)\leq C_u(N^{-s_c}+N^{-\sigma(s)}K_I^{1/2}),$$
as was needed to show. This completes the proof of \eqref{eq:LTS}. 

The estimate \eqref{eq:LTSsc} follows directly from \eqref{eq:LTS} with $s=s_c$. With \eqref{eq:LTSsc} in place, we can prove \eqref{eq:LTS small} by continuing from \eqref{for smallness}, choosing $\eta$ sufficiently small, and noting that $\sup_{t\in I}N(t)\leq 1$ implies
	$$\lim_{N\to\infty} \big[\inf_{t\in I}\norm{u_{>N}(t)}_{\dot{H}_x^{s_c}}+\sup_{J_k\subset I}\norm{u_{>\eta N}}_{L_t^\infty\dot{H}_x^{s_c}(J_k\times\R^3)}^{2s_c}\big]=0.$$ 
for any $\eta>0$. This completes the proof of Proposition~\ref{prop:LTS}.
\end{proof}


\subsection{The rapid frequency-cascade scenario}\label{section:rfc}

In this section, we preclude the existence of rapid frequency-cascade solutions, that is, almost periodic solutions $u$ as in Theorem~\ref{thm:reduction2} for which
		\begin{equation}\label{eq:finite K}
		K_{[0,\infty)}=\int_0^\infty N(t)^{3-2s_c}\,dt<\infty.
		\end{equation} 

We show that \eqref{eq:finite K} and Proposition~\ref{prop:LTS} imply that such a solution would possess additional regularity. We then use the additional regularity and the conservation of energy to derive a contradiction.

We note here that \eqref{eq:finite K} implies
		\begin{equation}
		\label{N to zero}
		\lim_{t\to\infty} N(t)=0.
		\end{equation}
		
We begin with the following lemma.

\begin{lemma}[Improved regularity]\label{lem:regularity} Let $u:[0,\infty)\times\R^3\to\C$ be an almost periodic solution as in Theorem~\ref{thm:reduction2}. Suppose 
	\begin{equation}
	\label{eq:input2}
	u\in L_t^\infty\dot{H}_x^{s}([0,\infty)\times\R^3)\quad\text{for some}\quad s_c\leq s<3/2+s_c.
	\end{equation}
If \eqref{eq:finite K} holds, then 
	\begin{equation}
	\label{eq:regularity}
	u\in L_t^\infty\dot{H}_x^{\sigma}([0,\infty)\times\R^3)\quad\text{for all}\quad s_c\leq \sigma<\sigma(s),
	\end{equation}	
where as above $\sigma(s):=1/2+s-s_c$.
\end{lemma}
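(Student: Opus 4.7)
The plan is to mirror the structure of Lemma~\ref{lem:improved decay}, with high-frequency control replacing low-frequency control. The argument has three steps: upgrade Proposition~\ref{prop:LTS} to the bound $A_{[0,\infty)}(N) \lesssim_u N^{-\sigma(s)}$ for all $N > 0$; deduce the pointwise-in-time bound $\|P_N u\|_{L_t^\infty L_x^2} \lesssim_u N^{-\sigma(s)}$ for $N > 1$; then sum dyadically to recover \eqref{eq:regularity}.

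For the first step, Strichartz applied to $u_{>N}$ together with the reduced Duhamel formula (Proposition~\ref{reduced duhamel}) gives, after a standard truncation argument,
\[
A_{[0,\infty)}(N) \lesssim_u \|P_{>N} F(u)\|_{L_t^2 L_x^{6/5}([0,\infty)\times\R^3)};
\]
the initial-data contribution $\|u_{>N}(T)\|_{L_x^2} \lesssim_u N^{-s_c}\|u_{>N}(T)\|_{\dot H_x^{s_c}}$ vanishes as $T \to \infty$ by Bernstein, almost periodicity, and $N(t) \to 0$. Lemma~\ref{lem:recurrence} and the hypothesis $u \in L_t^\infty \dot H_x^s$ then yield the recurrence
\[
A_{[0,\infty)}(N) \lesssim_u C_\eta N^{-\sigma(s)} K_{[0,\infty)}^{1/2} + \sum_{M \leq \eta N} (M/N)^2 A_{[0,\infty)}(M).
\]
Since $K_{[0,\infty)} < \infty$, the claim follows by induction on dyadic $N$: the base case $N \leq 1$ is Proposition~\ref{prop:LTS}, using $\sigma(s) > s_c$ (equivalent to $s_c < 1/2$) to dominate $N^{-s_c}$ by $N^{-\sigma(s)}$ in this range. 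For $N > 1$, the bound $\sigma(s) < 2$ yields $\sum_{M \leq \eta N}(M/N)^2 M^{-\sigma(s)} \lesssim \eta^{2-\sigma(s)} N^{-\sigma(s)}$, and choosing $\eta$ sufficiently small closes the bootstrap.

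The same Strichartz/reduced Duhamel reduction applied to $P_N u$ for $N > 1$, together with Lemma~\ref{lem:recurrence} (using $P_N \leq P_{>N/3}$) and the first-step bound, yields $\|P_N u\|_{L_t^\infty L_x^2} \lesssim_u N^{-\sigma(s)}$. For the last step, fix $s_c < \sigma < \sigma(s)$ and split
\[
\|u\|_{\dot H_x^\sigma}^2 \sim \sum_{N \leq 1} N^{2\sigma} \|u_N\|_{L_x^2}^2 + \sum_{N > 1} N^{2\sigma} \|u_N\|_{L_x^2}^2.
\]
Bernstein and \eqref{a priori} give $\|u_N\|_{L_x^2} \lesssim_u N^{-s_c}$ for $N \leq 1$, bounding the low-frequency sum by $\sum_{N \leq 1} N^{2(\sigma-s_c)}$, finite since $\sigma > s_c$. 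The high-frequency sum is bounded by $\sum_{N > 1} N^{2(\sigma-\sigma(s))}$, finite since $\sigma < \sigma(s)$. Taking the $L_t^\infty$-norm completes the argument; the endpoint $\sigma = s_c$ is \eqref{a priori}.

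The main technical nuisance is justifying the passage from the weak-in-$\dot H_x^{s_c}$ reduced Duhamel formula to the strong Strichartz bound used above; this follows from the standard truncation argument that replaces $[0,\infty)$ by $[0,T]$, applies Strichartz with terminal data $u_{>N}(T)$, and sends $T \to \infty$ using Bernstein plus almost periodicity to kill the endpoint term. A secondary point is verifying that the first-step induction closes cleanly without requiring finer refinements of Proposition~\ref{prop:LTS} (such as the \eqref{for smallness}-style estimates used in Lemma~\ref{lem:improved decay}); this works because Lemma~\ref{lem:recurrence} already produces a pure recurrence for $A$ once the endpoint term has been discarded.
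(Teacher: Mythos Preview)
Your proof is correct and follows essentially the same three-step structure as the paper: upgrade the long-time Strichartz bound to $A_{[0,\infty)}(N)\lesssim_u N^{-\sigma(s)}$, transfer this to an $L_t^\infty L_x^2$ decay estimate for high-frequency pieces, and sum dyadically. The differences are purely cosmetic: for step one the paper works on a nested sequence of compact intervals $I_n$ and sends $n\to\infty$ at the end (using $N(t)\to 0$ to kill the data term), whereas you take the limit first and then run an explicit dyadic induction in $N$ with base case $N\le 1$; for step two the paper decomposes $F(u)=F(u_{\le N})+[F(u)-F(u_{\le N})]$ and estimates each piece directly using the step-one bound, while you instead reuse Lemma~\ref{lem:recurrence} via $P_N=P_NP_{>N/3}$. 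Both routes are equivalent.
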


\begin{proof} Throughout the proof, we take all spacetime norms over $[0,\infty)\times\R^3$. 

We will first use Proposition~\ref{prop:LTS} and \eqref{eq:finite K} to establish 
		\begin{equation}
		\label{improve A}
		A_{[0,\infty)}(N)\lesssim_u N^{-\sigma(s)}.
		\end{equation}
 
Let $I_n\subset[0,\infty)$ be a nested sequence of compact subintervals, each of which is a contiguous union of characteristic subintervals $J_k$. We let $\eta>0$ and apply Bernstein, Strichartz, Lemma~\ref{lem:recurrence}, and \eqref{eq:input2} to estimate
	\begin{align*}
	A_{I_n}(N)&\lesssim_u N^{-s_c}\inf_{t\in I_n}\norm{u_{>N}(t)}_{\dot{H}_x^{s_c}}+C_\eta N^{-\sigma(s)}K_{I_n}^{1/2}+\sum_{M\leq\eta N}(\tfrac{M}{N})^{2}A_{I_n}(M).
	\end{align*}
As \eqref{prop:LTS} gives $A_{I_n}(N)\lesssim_u N^{-s_c}+N^{-\sigma(s)}K_{I_n}^{1/2},$ we may choose $\eta$ sufficiently small and continue from above to get
	\begin{equation}\label{rfc lts}
	A_{I_n}(N)\lesssim_u N^{-s_c}\inf_{t\in I_n}\norm{u_{>N}(t)}_{\dot{H}_x^{s_c}}+N^{-\sigma(s)}K_{I_n}^{1/2}.
	\end{equation}

Using \eqref{N to zero}, we see that for any $N>0$ we have
	\begin{equation}\nonumber
	\lim_{t\to\infty}\norm{u_{>N}(t)}_{\dot{H}_x^{s_c}}=0.
	\end{equation}
Hence sending $n\to\infty$, continuing from \eqref{rfc lts}, and using \eqref{eq:finite K}, we get
	$$A_{[0,\infty)}(N)\lesssim_u N^{-\sigma(s)}.$$

We now show that \eqref{improve A} implies
	\begin{equation}
	\label{eq:mass decay}
	\xnorm{u_{>N}}{\infty}{2}{[0,\infty)}\lesssim_u N^{-\sigma(s)}.
	\end{equation}

We first use Proposition~\ref{reduced duhamel} and Strichartz to estimate
	$$\xonorm{u_{>N}}{\infty}{2}\lesssim \xonorm{P_{>N}\big(F(u)\big)}{2}{6/5}.$$

We write $F(u)=F(u_{\leq N})+F(u)-F(u_{\leq N})$. Noting that $s<3/2+s_c$ implies $\sigma(s)<2$, we use Bernstein, the chain rule, \eqref{a priori}, and \eqref{improve A} to estimate
	\begin{align*}
	\xonorm{P_{>N}\big(F(u_{\leq N})\big)}{2}{6/5}
	&\lesssim N^{-2}\xonorm{\nsc u}{\infty}{2}^p\sum_{M\leq N}\xonorm{\Delta u_M}{2}{6}
	\\ &\lesssim_u \sum_{M\leq N}(\tfrac{M}{N})^{2}M^{-\sigma(s)}
	\\ &\lesssim_u N^{-\sigma(s)}.
	\end{align*}

We next use H\"older, Sobolev embedding, \eqref{a priori}, and \eqref{improve A} to estimate
	$$
	\xonorm{P_{>N}\big(F(u)-F(u_{\leq N})\big)}{2}{6/5}
	\lesssim \xonorm{u}{\infty}{3p/2}^p\xonorm{u_{>N}}{2}{6}
	\lesssim_u N^{-\sigma(s)}.
	$$
Adding the last two estimates gives \eqref{eq:mass decay}.

Finally, we use \eqref{eq:mass decay} to prove \eqref{eq:regularity}. We fix $s_c\leq \sigma<\sigma(s)$ and use Bernstein, \eqref{a priori}, and \eqref{eq:mass decay} to estimate
	\begin{align*}
	\norm{\vert\nabla\vert^{\sigma}u}_{L_t^\infty L_x^2}&\lesssim\norm{\vert\nabla\vert^{s_c}u_{\leq 1}}_{L_t^\infty L_x^2}+\sum_{M>1}M^{\sigma}\norm{u_M}_{L_t^\infty L_x^2}
	\\ &\lesssim_u 1+\sum_{M>1} M^{\sigma-\sigma(s)}
	\lesssim_u 1. 
	\end{align*}
This completes the proof of Lemma~\ref{lem:regularity}. \end{proof}

We now iterate Lemma~\ref{lem:regularity} to establish additional regularity.

\begin{proposition}[Additional regularity]\label{prop:regularity} Let $u:[0,\infty)\times\R^3\to\C$ be an almost periodic solution as in Theorem~\ref{thm:reduction2}. If \eqref{eq:finite K} holds, then $u\in L_t^\infty\dot{H}_x^{1+\varepsilon}$ for some $\eps>0$.  
\end{proposition}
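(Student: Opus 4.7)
The plan is to iterate Lemma~\ref{lem:regularity} finitely many times, starting from the base regularity $s_c$ and climbing past $1$. The engine is the observation that, since $s_c < 1/2$, the map $s \mapsto \sigma(s) = s + (1/2 - s_c)$ produces a strictly positive gain of $1/2 - s_c > 0$ at each application, so after enough iterations one must exceed any target below $3/2 + s_c$.

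Concretely, fix $\delta \in (0, 1/2 - s_c)$ and define recursively $s_0 := s_c$ and $s_{n+1} := s_n + (1/2 - s_c - \delta)$, so that $s_{n+1} < \sigma(s_n)$. I claim by induction that $u \in L_t^\infty \dot{H}_x^{s_n}$ for every $n$ with $s_n < 3/2 + s_c$. The base case $n = 0$ is \eqref{a priori}. For the inductive step, the hypothesis $u \in L_t^\infty \dot{H}_x^{s_n}$ together with $s_c \leq s_n < 3/2 + s_c$ is exactly what Lemma~\ref{lem:regularity} requires, and its conclusion $u \in L_t^\infty \dot{H}_x^\sigma$ for all $s_c \leq \sigma < \sigma(s_n)$ covers $\sigma = s_{n+1}$ by the choice of step size.

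Since the step size is a positive constant, the sequence $s_n$ is strictly increasing and unbounded; let $n^*$ be the smallest index with $s_{n^*} > 1$. Then
\[
s_{n^*} = s_{n^*-1} + (1/2 - s_c - \delta) \leq 1 + (1/2 - s_c - \delta) < 3/2 < 3/2 + s_c,
\]
so the constraint $s_n \in [s_c, 3/2 + s_c)$ is maintained throughout the iteration and the induction reaches step $n^*$. Setting $\varepsilon := s_{n^*} - 1 > 0$ yields $u \in L_t^\infty \dot{H}_x^{1+\varepsilon}$, as required.

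There is no serious obstacle here: all the analytical work is already done inside Lemma~\ref{lem:regularity} (which in turn relies on the long-time Strichartz estimate of Proposition~\ref{prop:LTS} and the frequency-cascade hypothesis \eqref{eq:finite K}). The remaining task is the combinatorial bookkeeping above, whose only two substantive checks are that the gain per iteration is strictly positive (this uses $s_c < 1/2$, the very hypothesis of Theorem~\ref{thm:reduction2}) and that $s_n$ does not escape the admissible range $[s_c, 3/2+s_c)$ before crossing $1$ (comfortably true since the total run is under $1$ while the window has length $3/2$).
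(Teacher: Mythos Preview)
Your proof is correct and follows the same strategy as the paper: iterate Lemma~\ref{lem:regularity} from the base $s_0=s_c$ until the regularity index exceeds~$1$. Your additive step $s_{n+1}=s_n+(1/2-s_c-\delta)$ is a slightly cleaner bookkeeping than the paper's multiplicative recursion $s_{n+1}=c\cdot\sigma(s_n)$ (which converges to a limit $\ell>1$), but the underlying idea---each application of the lemma gains a fixed amount $1/2-s_c>0$ of regularity---is identical.
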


\begin{proof} As $0<s_c<1/2$, we may choose $c$ such that 
	$$2s_c<\tfrac{2}{3-2s_c}<c<\tfrac{3+2s_c}{4}<1.$$ 

We define $s_0=s_c$, and for $n\geq 0$ we define $s_{n+1}=c\cdot\sigma(s_{n}),$ where as above $\sigma(s):=1/2+s-s_c$. The constraint $c>2s_c$ guarantees that the sequence $s_n$ is increasing and bounded above by $\ell:=\tfrac{c(1-2s_c)}{2(1-c)}$. In fact, elementary arguments show that the sequence $s_n$ converges to $\ell$, and the constraint $c>\tfrac{2}{3-2s_c}$ guarantees that $\ell>1$. 

We have that $s_n\geq s_c$ for all $n\geq 0$, while the constraint $c<\tfrac{3+2s_c}{4}$ guarantees $s_n<3/2+s_c$ for all $n\geq 0$. Thus, noting that $s_n\leq s_{n+1}<\sigma(s_n)$ for each $n\geq 0$, we deduce from Lemma~\ref{lem:regularity} that
	$$u\in L_t^\infty\dot{H}_x^{s_n}\implies u\in L_t^\infty\dot{H}_x^{s_{n+1}}\quad\text{for all}\quad n\geq 0.$$ 
As \eqref{a priori} gives $u\in L_t^\infty\dot{H}_x^{s_0}$, we get by induction that $u\in L_t^\infty\dot{H}_x^{s_n}$ for all $n\geq 0$. As $s_n\to\ell>1$, we conclude that $u\in L_t^\infty\dot{H}_x^{1+\eps}$ for some $\eps>0$. 
\end{proof}

Combining Proposition~\ref{prop:regularity} with almost periodicity and the conservation of energy, we preclude the existence of rapid frequency cascades. 

\begin{theorem}[No frequency-cascades]\label{thm:rfc}
There are no almost periodic solutions $u$ as in Theorem~\ref{thm:reduction2} such that \eqref{eq:finite K} holds.
\end{theorem}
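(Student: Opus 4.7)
The plan is to use the additional regularity from Proposition~\ref{prop:regularity} together with the conservation of energy to derive a contradiction. Suppose such a $u$ exists. Since $K_{[0,\infty)} < \infty$ and $|J_k| \sim_u N_k^{-2}$, one immediately sees $\sum_k N_k^{-2} = \infty$ is impossible unless $N_k \to 0$, so $N(t) \to 0$ as $t \to \infty$. Proposition~\ref{prop:regularity} gives $u \in L_t^\infty \dot{H}_x^{1+\eps}$ for some $\eps > 0$, so in particular the energy $E[u(t)]$ is finite and conserved. I will show that $E[u(t)] \to 0$ as $t \to \infty$, which forces $E[u] \equiv 0$, hence $u \equiv 0$, contradicting that $u$ blows up.

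To show the kinetic energy vanishes, fix $\eta > 0$ and choose $C(\eta)$ via almost periodicity so that $\|\vert\nabla\vert^{s_c} u_{>C(\eta)N(t)}\|_{L_x^2} < \eta$ uniformly in $t$. I will write $\|\nabla u(t)\|_{L_x^2}^2$ as a frequency integral and split at $C(\eta)N(t)$ and at a large scale $R$ to be chosen. For the genuinely low frequencies, use $|\xi|^2 \leq [C(\eta)N(t)]^{2(1-s_c)} |\xi|^{2s_c}$ together with \eqref{a priori} to get a contribution of size $[C(\eta)N(t)]^{2(1-s_c)}$. For the intermediate range $C(\eta)N(t) < |\xi| \leq R$, use $|\xi|^2 \leq R^{2(1-s_c)} |\xi|^{2s_c}$ together with the $<\eta$ bound to get $R^{2(1-s_c)}\eta^2$. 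For the very high frequencies $|\xi| > R$, use $|\xi|^2 \leq R^{-2\eps}|\xi|^{2+2\eps}$ together with $u \in L_t^\infty \dot{H}_x^{1+\eps}$ to get $R^{-2\eps}$. Optimizing in $R$ yields
\begin{equation*}
\|\nabla u(t)\|_{L_x^2}^2 \lesssim_u [C(\eta)N(t)]^{2(1-s_c)} + \eta^{2\eps/(1-s_c+\eps)}.
\end{equation*}
Sending $t \to \infty$ (using $N(t) \to 0$) and then $\eta \to 0$ gives $\|\nabla u(t)\|_{L_x^2} \to 0$.

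For the potential energy, I will interpolate in the same spirit as in the proof of Theorem~\ref{thm:ftbu}: Sobolev embedding and Gagliardo–Nirenberg give
\begin{equation*}
\|u(t)\|_{L_x^{p+2}} \lesssim \|\nabla u(t)\|_{L_x^2}^{2/(p+2)} \|\vert\nabla\vert^{s_c} u(t)\|_{L_x^2}^{p/(p+2)},
\end{equation*}
which together with \eqref{a priori} and the vanishing of $\|\nabla u(t)\|_{L_x^2}$ gives $\|u(t)\|_{L_x^{p+2}} \to 0$. Adding this to the kinetic decay yields $E[u(t)] \to 0$, and by conservation $E[u] \equiv 0$, so $u \equiv 0$, a contradiction.

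The main obstacle is the kinetic part: $\dot{H}_x^1$ is strictly between the two regularities we control, so we have to combine two pieces of information — the additional regularity $\dot{H}_x^{1+\eps}$ (for the high-frequency tail) and the almost periodicity with $N(t) \to 0$ (for the mass concentrated near frequency $N(t)$). The trick is inserting an intermediate scale $R$ to split the high-frequency half of the integral, so that the concentration at scale $N(t)$ drives the low side to zero while the $\dot{H}_x^{1+\eps}$ bound controls the ultra-high tail. Once the decay of $\|\nabla u(t)\|_{L_x^2}$ is established, the rest follows the same pattern as in Section~\ref{section:ftbu}.
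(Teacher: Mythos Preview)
Your proof is correct and follows essentially the same approach as the paper. The only difference is cosmetic: where the paper splits $\nabla u(t)$ at the single scale $C(\eta)N(t)$ and handles the high-frequency piece by a one-line interpolation between $\dot H_x^{s_c}$ (which contributes the small factor $\eta$) and $\dot H_x^{1+\eps}$, you introduce an auxiliary scale $R$ and optimize explicitly---this is exactly the same interpolation carried out by hand, and it yields the same exponent $\eps/(1-s_c+\eps)$. One small remark: your sentence deducing $N(t)\to 0$ is phrased awkwardly (the clause ``$\sum_k N_k^{-2}=\infty$ is impossible unless $N_k\to 0$'' is not quite what you mean); the clean statement is that there must be infinitely many characteristic subintervals, so convergence of $\sum_k N_k^{1-2s_c}$ forces $N_k\to 0$ since $1-2s_c>0$.
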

\begin{proof} Suppose $u$ were such a solution and let $\eta>0$. By almost periodicity, we may find $C(\eta)$ large enough that $\norm{\nsc u_{>C(\eta)N(t)}}_{L_t^\infty L_x^2}<\eta.$
Thus, by interpolation and Proposition~\ref{prop:regularity}, we have
	$$\norm{\nabla u_{>C(\eta)N(t)}}_{L_t^\infty L_x^2}
	\lesssim \norm{\nsc u_{>C(\eta)N(t)}}_{L_t^\infty L_x^2}^{\frac{\eps}{1+\eps-s_c}}
	\norm{\vert\nabla\vert^{1+\eps}u}_{L_t^\infty L_x^2}^{\frac{1-s_c}{1+\eps-s_c}}
	\lesssim_u \eta^{\frac{\eps}{1+\eps-s_c}}
	$$
for some $\eps>0$. 

On the other hand, by Bernstein and \eqref{a priori} we have
	$$\norm{\nabla u_{\leq C(\eta)N(t)}(t)}_{L_x^2}\lesssim_u [C(\eta)N(t)]^{1-s_c}\quad\text{for any}\quad t\in[0,\infty).$$ 

Thus we find
	\begin{equation}\nonumber\norm{\nabla u(t)}_{L_x^2}\lesssim_u \eta^{\frac{\eps}{1+\eps-s_c}}
	+[C(\eta)N(t)]^{1-s_c}\quad\text{for any}\quad t\in[0,\infty).\end{equation}
Using \eqref{N to zero} and the fact that $\eta>0$ was arbitrary, we deduce that	
	\begin{equation}\label{ke to zero}
	\norm{\nabla u(t)}_{L_x^2}\to 0\quad\text{as}\quad t\to\infty.
	\end{equation} 

We next use H\"older and Sobolev embedding to estimate
	\begin{align*}
	\norm{u(t)}_{L_x^{p+2}}\lesssim\norm{u(t)}_{L_x^{3p/2}}^{\frac{p}{p+2}}\norm{u(t)}_{L_x^{6}}^{\frac{2}{p+2}}\lesssim \norm{\nsc u(t)}_{L_x^2}^{\frac{p}{p+2}}\norm{\nabla u(t)}_{L_x^2}^{\frac{2}{p+2}},
	\end{align*}
so that \eqref{a priori} and \eqref{ke to zero} imply
	\begin{equation}\label{pe to zero}
	\norm{u(t)}_{L_x^{p+2}}\to 0\quad\text{as}\quad t\to\infty.
	\end{equation}
	
Adding \eqref{ke to zero} and \eqref{pe to zero} implies that $E[u(t)]\to 0$ as $t\to\infty.$ By the conservation of energy, we conclude $E[u(t)]\equiv 0$. Thus we must have $u\equiv 0$, which contradicts the fact that $u$ blows up. This completes the proof of Theorem~\ref{thm:rfc}.
\end{proof}


\subsection{A frequency-localized Lin--Strauss Morawetz inequality}\label{section:flmor}
	In this section, we use Proposition~\ref{prop:LTS} to prove a frequency-localized Lin--Strauss Morawetz inequality, which we will use in Section~\ref{section:soliton} to rule out the quasi-soliton scenario. As $s_c<1/2$, we prove an estimate that is localized to low frequencies.
	
	The main result of this section is the following.
\begin{proposition}[Frequency-localized Morawetz]\label{thm:flmor} Let $u:[0,\infty)\times\R^3\to\C$ be an almost periodic solution as in Theorem~\ref{thm:reduction2}. Let $I\subset[0,\infty)$ be a compact time interval, which is a contiguous union of characteristic subintervals $J_k$. Then for any $\eta>0$, there exists $N_0=N_0(\eta)$ such that for $N>N_0$, we have 
	\begin{equation}\label{eq:flmor}
	\iint_{I\times\R^3}\frac{\vert u_{\leq N}(t,x)\vert^{p+2}}{\vert x\vert}\,dx\,dt\lesssim_u \eta(N^{1-2s_c}+ K_I),
	\end{equation}
where $K_I$ is as in \eqref{def:K}. 
\end{proposition}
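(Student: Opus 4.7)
The plan is to run the analogue of Proposition~\ref{prop:flmor2} with the low-frequency piece $u_{\leq N}$ playing the role that $u_{>N}$ played there. This is natural because for $s_c<1/2$ it is $u_{\leq N}$ whose $\dot{H}_x^{1/2}$-norm is controllable, via Bernstein, by $\|\nsc u\|_{L_x^2}$. I would first collect the quantitative inputs from Proposition~\ref{prop:LTS}: the bound $\xonorm{u_{>N}}{2}{6}\lesssim_u N^{-s_c}+N^{-1/2}K_I^{1/2}$ for every $N$, and its $\eta$-small counterpart for $N>N_0(\eta)$ from \eqref{eq:LTS small}. I would also establish
$$\xonorm{|\nabla|^{1/2}u_{\leq N}}{\infty}{2}\lesssim_u \eta N^{1/2-s_c}\qquad\text{for }N>N_1(\eta),$$
by splitting $u_{\leq N}=u_{\leq c(\eta)}+u_{c(\eta)<\cdot\leq N}$, bounding the first piece by Bernstein as $c(\eta)^{1/2-s_c}$ and the second by Bernstein against the almost-periodicity consequence $\xonorm{\nsc u_{>c(\eta)}}{\infty}{2}<\eta^2$ (which uses $\sup_t N(t)\leq 1$), and choosing $N_1\gg c(\eta)\eta^{-1/(1/2-s_c)}$ so the first contribution is absorbed.

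Next I would set up the Morawetz action
$$\text{Mor}(t)=2\,\Im\int_{\R^3}\frac{x}{|x|}\cdot\nabla u_{\leq N}(t,x)\,\bar u_{\leq N}(t,x)\,dx,$$
differentiate in time using $(i\partial_t+\Delta)u_{\leq N}=P_{\leq N}F(u)$, and use the identity
$$\{P_{\leq N}F(u),u_{\leq N}\}_P=\{F(u),u\}_P-\{F(u_{>N}),u_{>N}\}_P-\{F(u)-F(u_{>N}),u_{>N}\}_P-\{P_{>N}F(u),u_{\leq N}\}_P$$
in strict analogy with the proof of Proposition~\ref{prop:flmor2}. Integration by parts against $x/|x|$, the pointwise bound $\bigl||u|^{p+2}-|u_{>N}|^{p+2}-|u_{\leq N}|^{p+2}\bigr|\lesssim|u_{\leq N}||u_{>N}|^{p+1}+|u_{\leq N}|^{p+1}|u_{>N}|$, and the fundamental theorem of calculus yield
$$\iint_{I\times\R^3}\tfrac{|u_{\leq N}|^{p+2}}{|x|}\,dx\,dt\lesssim\|\text{Mor}\|_{L_t^\infty(I)}+\text{(error terms)}.$$
The Morawetz action is controlled via \eqref{hardys} by $\||\nabla|^{1/2}u_{\leq N}\|_{L_t^\infty L_x^2}^2\lesssim_u\eta^2N^{1-2s_c}$, supplying the desired $\eta N^{1-2s_c}$ contribution. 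The prototypical error $\iint|u_{\leq N}|^{p+1}|u_{>N}|/|x|$ I would bound via Hardy in $L_x^{6/5}$, the chain rule, and H\"older by $\lesssim_u \xonorm{\nabla u_{\leq N}}{2}{6}\xonorm{u_{>N}}{2}{6}$; summing Bernstein with Proposition~\ref{prop:LTS} dyadically over $M\leq N$ gives $\xonorm{\nabla u_{\leq N}}{2}{6}\lesssim_u N^{1-s_c}+N^{1/2}K_I^{1/2}$, and coupling this with the $\eta$-small version of $\xonorm{u_{>N}}{2}{6}$ together with the AM-GM inequality $N^{1/2-s_c}K_I^{1/2}\leq\tfrac12(N^{1-2s_c}+K_I)$ yields the desired $\lesssim_u\eta(N^{1-2s_c}+K_I)$. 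The companion errors from $\{P_{>N}F(u),u_{\leq N}\}_P$ and $\{F(u)-F(u_{>N}),u_{>N}\}_P$ I would treat in the same spirit, using $\xonorm{P_{>N}F(u)}{2}{6/5}\lesssim_u\xonorm{u_{>N}}{2}{6}$ from H\"older and Sobolev embedding.

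The main obstacle will be the error $\iint|u_{\leq N}||u_{>N}|^{p+1}/|x|$. The Bourgain-style absorption used in Proposition~\ref{prop:flmor2} reduces its counterpart to finiteness of the left-hand side, but the corresponding reduction here produces $\iint|u_{>N}|^{p+2}/|x|$, which is \emph{not} our LHS; moreover the Hardy--Sobolev--Bernstein route that established finiteness in Proposition~\ref{prop:flmor2} demands the derivative count $(3p-2)/(2(p+2))$ not exceed $s_c$, a condition that fails for $p<2$. My plan would be to split on $\{|u_{>N}|\leq\varepsilon|u_{\leq N}|\}$, absorbing an $\varepsilon^{p+1}$-multiple of the main term into the LHS, and on the complementary region exploit radiality through the pointwise bound $|u_{>N}(x)|\lesssim|x|^{s-3/2}\|u_{>N}\|_{\dot{H}_x^s}$ valid for $1/2<s<3/2$, combined with the almost-periodicity cutoff at frequency $C(\eta)$ to keep $\|u_{>N}\|_{\dot{H}_x^s}$ finite via the dyadic sum $\sum_{N<M\lesssim C(\eta)} M^{s-s_c}\|\nsc u_M\|_{L_x^2}$. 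Closing the argument in this way is the most delicate step and is where the low-frequency analysis genuinely diverges from the high-frequency one.
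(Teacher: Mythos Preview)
Your mirror of Proposition~\ref{prop:flmor2} breaks down at two points, and in both cases the paper proceeds differently.

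First, the decomposition. By literally swapping high and low you produce the bracket $\{F(u)-F(u_{>N}),u_{>N}\}_P$, whose second entry is $u_{>N}$. Writing $\{f,g\}_P=\nabla\text{\O}(fg)+\text{\O}(f\nabla g)$ (or the variant with $\nabla f$) and expanding $\nabla\bigl(F(u)-F(u_{>N})\bigr)$ by the chain rule, one is left in either case with a term carrying $\nabla u_{>N}$, for which Bernstein gives no gain and Proposition~\ref{prop:LTS} provides no control. The paper does \emph{not} mirror the high-frequency decomposition; it uses the simpler identity
\[
\{P_{\leq N}F(u),u_{\leq N}\}_P=\{F(u_{\leq N}),u_{\leq N}\}_P+\{F(u)-F(u_{\leq N}),u_{\leq N}\}_P-\{P_{>N}F(u),u_{\leq N}\}_P,
\]
so that the second entry of \emph{every} error bracket is $u_{\leq N}$ and all stray gradients land on the low-frequency piece, where Bernstein is available.

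Second, the term you call the main obstacle, $\iint|u_{\leq N}||u_{>N}|^{p+1}/|x|$, is in fact handled directly: place Hardy on $\tfrac1{|x|}u_{\leq N}$ in $L^\infty_tL^{3p/2}_x$, then Bernstein gives $\|\nabla u_{\leq N}\|_{L^\infty_tL^{3p/2}_x}\lesssim N^{s_c}\|\nabla u_{\leq N}\|_{L^\infty_tL^2_x}\lesssim_u\eta N$ (this is \eqref{low small} with $s=1$), and put two copies of $u_{>N}$ in $L^2_tL^6_x$ via \eqref{mor high} and the remaining $p-1$ copies in $L^\infty_tL^{3p/2}_x$. No absorption and no radial Sobolev are needed. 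Your radial-Sobolev plan has its own gap: $u_{>N}$ is not known to lie in any $\dot H^s_x$ with $s>1/2$, and the cutoff at $C(\eta)$ only controls $u_{N<\cdot\leq C(\eta)}$, not the tail $u_{>C(\eta)}$.

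The genuinely delicate error in the paper's decomposition is instead $\bigl\|\tfrac1{|x|}u_{\leq N}\,P_{>N}F(u)\bigr\|_{L^1_{t,x}}$: the projection $P_{>N}$ blocks a direct Hardy--H\"older estimate with two $L^2_tL^6_x$ factors, and the paper passes through pointwise maximal-function domination, a Cauchy--Schwarz splitting inside $\M$, and the interpolated norm $\|\vert\nabla\vert^{(1+s_c)/2}u_{\leq N}\|_{L^4_tL^3_x}$ from \eqref{43}. Your proposal does not anticipate this step.
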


	To prove Proposition~\ref{thm:flmor}, we begin as in the proof of the standard Lin--Strauss Morawetz inequality \eqref{eq:ls}. We truncate the high frequencies of the solution and work with $u_{\leq N}$ for some $N>0$. As $u_{\leq N}$ is not a true solution to \eqref{nls}, we need to control error terms arising from this frequency projection. To do this, we choose $N$ large enough to capture `most' of the solution and use the estimates proved in Section~\ref{section:lts2}. We make these notions precise in the following lemma.

	\begin{lemma}[Low and high frequency control]\label{lem:control} Let $u,$ $I$, $K_I$ be as above. With all spacetime norms over $I\times\R^3$, we have the following.
	
	For any $N>0$ and $s>1/2$, 
	\begin{equation}\label{mor low}
	\xonorm{\vert\nabla\vert^s u_{\leq N}}{2}{6}\lesssim_u N^{s-s_c}(1+N^{2s_c-1}K_I)^{1/2}.
	\end{equation}

	For any $\eta>0$ and $s>s_c$, there exists $N_1=N_1(s,\eta)$ such that for $N>N_1$,
	\begin{equation}\label{low small}
	\xonorm{\vert\nabla\vert^{s} u_{\leq N}}{\infty}{2}\lesssim_u \eta N^{s-s_c}.
	\end{equation}
	
	For any $\eta>0$, there exists $N_2=N_2(\eta)>0$ such that for $N>N_2$, we have
	\begin{equation}\label{mor high}
	\xonorm{u_{>N}}{2}{6}\lesssim_u \eta N^{-s_c}(1+N^{2s_c-1}K_I)^{1/2}.
	\end{equation}

	\end{lemma}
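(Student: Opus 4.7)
The plan is to dispatch the three estimates in decreasing order of difficulty, reading off the last one almost directly from Proposition~\ref{prop:LTS}. Specifically, \eqref{mor high} is essentially a restatement of \eqref{eq:LTS small}: recalling that $A_I(N)=\xnorm{u_{>N}}{2}{6}{I}$ and that
$$N^{-s_c}+N^{-1/2}K_I^{1/2}\sim N^{-s_c}\bigl(1+N^{s_c-1/2}K_I^{1/2}\bigr)\sim N^{-s_c}(1+N^{2s_c-1}K_I)^{1/2},$$
we may simply take $N_2(\eta)$ to be the threshold $N_0(\eta)$ provided by \eqref{eq:LTS small}.

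For \eqref{mor low}, the idea is to decompose $u_{\leq N}$ into Littlewood--Paley pieces and move derivatives across them via Bernstein. Using $u_M=P_M u_{>M/2}$ and the boundedness of $P_M$ on $L_x^6$, I would write
$$\xonorm{\vert\nabla\vert^s u_{\leq N}}{2}{6}\leq\sum_{M\leq N}\xonorm{\vert\nabla\vert^s u_M}{2}{6}\lesssim\sum_{M\leq N}M^s\xonorm{u_{>M/2}}{2}{6}\lesssim\sum_{M\leq N}M^s A_I(M/2).$$
Invoking \eqref{eq:LTSsc} gives $A_I(M/2)\lesssim_u M^{-s_c}+M^{-1/2}K_I^{1/2}$. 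Since the hypothesis $s>1/2$ (combined with $s_c<1/2$) ensures $s>s_c$ and $s>1/2$, both of the resulting geometric series $\sum_{M\leq N}M^{s-s_c}$ and $\sum_{M\leq N}M^{s-1/2}$ are dominated by their largest dyadic term. This yields
$$\xonorm{\vert\nabla\vert^s u_{\leq N}}{2}{6}\lesssim_u N^{s-s_c}+N^{s-1/2}K_I^{1/2}\sim N^{s-s_c}(1+N^{2s_c-1}K_I)^{1/2},$$
as claimed.

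For \eqref{low small}, the plan is to exploit almost periodicity to split the low-frequency region at a carefully chosen threshold. Since $\sup_{t\in I}N(t)\leq 1$, given $\eta>0$ we may use almost periodicity (applied to $\vert\nabla\vert^{s_c}u$) to find $c(\eta)>0$ small so that $\xonorm{\vert\nabla\vert^{s_c}u_{>c(\eta)}}{\infty}{2}<\eta$. Then for $N>c(\eta)$, Bernstein gives
$$\xonorm{\vert\nabla\vert^s u_{\leq c(\eta)}}{\infty}{2}\lesssim c(\eta)^{s-s_c}\xonorm{\vert\nabla\vert^{s_c}u_{\leq c(\eta)}}{\infty}{2}\lesssim_u c(\eta)^{s-s_c},$$
$$\xonorm{\vert\nabla\vert^s u_{c(\eta)<\cdot\leq N}}{\infty}{2}\lesssim N^{s-s_c}\xonorm{\vert\nabla\vert^{s_c}u_{>c(\eta)}}{\infty}{2}\lesssim\eta N^{s-s_c}.$$
Summing and choosing $N_1(s,\eta)\gtrsim c(\eta)\eta^{-1/(s-s_c)}$ (which requires $s>s_c$) makes the first contribution bounded by $\eta N^{s-s_c}$ as well, finishing the proof.

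No step here involves serious analytic difficulty; each bound is a bookkeeping exercise combining almost periodicity, Bernstein, and the long-time Strichartz estimate. The only mildly subtle point is arranging the dyadic sum in \eqref{mor low} to recover precisely the announced form $(1+N^{2s_c-1}K_I)^{1/2}$, which relies on the elementary equivalence $(1+x)^{1/2}\sim 1+x^{1/2}$.
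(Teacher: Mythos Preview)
Your approach matches the paper's almost verbatim: \eqref{mor high} is read off from \eqref{eq:LTS small}, \eqref{mor low} is obtained by dyadically summing \eqref{eq:LTSsc}, and \eqref{low small} follows by splitting at a fixed frequency threshold and invoking almost periodicity together with $\sup_{t\in I}N(t)\leq 1$. The one slip is in the proof of \eqref{low small}: you describe the threshold $c(\eta)$ as ``small,'' but it must in fact be \emph{large} (at least the compactness modulus $C(\eta)$), since you need $\xonorm{\vert\nabla\vert^{s_c}u_{>c(\eta)}}{\infty}{2}<\eta$ --- with that adjective corrected, your argument is complete and coincides with the paper's.
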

	
	\begin{proof}
For \eqref{mor low}, we let $s>1/2$ and use \eqref{eq:LTSsc} to estimate
	\begin{align*}
	\xonorm{\vert\nabla\vert^{s}u_{\leq N}}{2}{6}&\lesssim\sum_{M\leq N}M^s\xonorm{u_M}{2}{6}
	\\ &\lesssim_u \sum_{M\leq N} M^{s-s_c}(1+M^{2s_c-1}K_I)^{1/2}
	\\ &\lesssim_u N^{s-s_c}(1+N^{2s_c-1}K_I)^{1/2}.
	\end{align*}
	
For \eqref{low small}, we first let $\eta>0$. Using almost periodicity and the fact that $\sup N(t)\leq 1$, we may find $C(\eta)>0$ so that $\xonorm{\nsc u_{>C(\eta)}}{\infty}{2}<\eta.$ Thus we can use Bernstein to see
	\begin{align*}
	\xonorm{\vert\nabla\vert^{s}& u_{\leq N}}{\infty}{2}
	\\ &\lesssim C(\eta)^{s-s_c}\xonorm{\nsc u_{\leq C(\eta)}}{\infty}{2}+N^{s-s_c}\xonorm{\nsc u_{C(\eta)<\cdot\leq N}}{\infty}{2}
	\\ &\lesssim_u C(\eta)^{s-s_c}+\eta N^{s-s_c}.
	\end{align*}
Choosing $N_1\gg \eta^{-1/(s-s_c)}C(\eta)$, we recover \eqref{low small}. 

Finally, we note that \eqref{mor high} is just a restatement of \eqref{eq:LTS small}. \end{proof}

We turn to the proof of Proposition~\ref{thm:flmor}.


\begin{proof}[Proof of Proposition~\ref{thm:flmor}] We take all spacetime norms over $I\times\R^3$.

We let $0<\eta\ll1 $ and choose 
	\begin{equation}\nonumber
	N>\max\{N_1(\tfrac12,\eta),N_1(\tfrac{1+s_c}{2},\eta^2),N_1(1,\eta),\tfrac{1}{\eta^2}N_2(\eta^2)\},
	\end{equation}
where $N_1$ and $N_2$ are as in Lemma~\ref{lem:control}. In particular, interpolating \eqref{mor low} and \eqref{low small} with $s=(1+s_c)/2$, we get
	\begin{equation}
	\label{43}
	\xonorm{\vert\nabla\vert^{(1+s_c)/2}u_{\leq N}}{4}{3}\lesssim_u\eta N^{(1-s_c)/2}(1+N^{2s_c-1}K_I)^{1/4}.
	\end{equation}
Moreover, as $\eta^2 N>N_2$, we can apply \eqref{mor high} to $u_{>\eta^2 N}$ to get
	\begin{equation}
	\label{high eta}
	\xonorm{u_{>\eta^2 N}}{2}{6}\lesssim \eta N^{-s_c}(1+N^{2s_c-1}K_I)^{1/2}.
	\end{equation}
 
We define the Morawetz action 
	$$\text{Mor}(t):=2\, \Im\int_{\R^3} \frac{x}{\vert x\vert}\cdot\nabla u_{\leq N}(t,x)\,\bar{u}_{\leq N}(t,x)\,dx.$$
A standard computation using $(i\partial_t+\Delta)u_{\leq N}=P_{\leq N}\big(F(u)\big)$ gives
	\begin{equation}\label{M'}
	\partial_t \text{Mor}(t)\gtrsim \int_{\R^3}\frac{x}{\vert x\vert}\cdot\{P_{\leq N}\big(F(u)\big),u_{\leq N}\}_P\,dx,\end{equation}
where the \emph{momentum bracket} $\{\cdot,\cdot\}_P$ is defined by $\{f,g\}_P:=\Re(f\nabla\bar{g}-g\nabla\bar{f}).$

Noting that $\{F(u),u\}_P=-\tfrac{p}{p+2}\nabla(\vert u\vert^{p+2}),$ we integrate by parts in \eqref{M'} to get
	\begin{align*}
	\partial_t \text{Mor}(t)&\gtrsim \int \frac{\vert u_{\leq N}(t,x)\vert^{p+2}}{\vert x\vert}\,dx
		+\int \frac{x}{\vert x\vert}\cdot\{P_{\leq N}\big(F(u)\big)-F(u_{\leq N}),u_{\leq N}\}_P\,dx.
	\end{align*}
Thus, by the fundamental theorem of calculus we have
	\begin{align}
	\iint_{I\times\R^3}&\frac{\vert u_{\leq N}(t,x)\vert^{p+2}}{\vert x\vert}\,dx\,dt \nonumber
	\\ &\lesssim \norm{\text{Mor}}_{L_t^\infty(I)}
	+\bigg\vert \iint_{I\times\R^3} \frac{x}{\vert x\vert}\cdot
		\{P_{\leq N}\big(F(u)\big)-F(u_{\leq N}),u_{\leq N}\}_P\,dx\,dt\bigg\vert. \nonumber
	\end{align}

To complete the proof of Proposition~\ref{thm:flmor}, it therefore suffices to show

\begin{equation}\label{lem:mor main}
\norm{\text{Mor}}_{L_t^\infty(I)}\lesssim_u \eta N^{1-2s_c},
\end{equation}
\begin{equation}\label{lem:mor error}
\bigg\vert \iint_{I\times\R^3} \frac{x}{\vert x\vert}\cdot
		\{P_{\leq N}\big(F(u)\big)-F(u_{\leq N}),u_{\leq N}\}_P\,dx\,dt\bigg\vert
		\lesssim_u \eta(N^{1-2s_c}+K_I).
\end{equation}

To prove \eqref{lem:mor main}, we use Bernstein, \eqref{hardys}, and \eqref{low small} to estimate
	\begin{align*}
	\norm{\text{Mor}}_{L_t^\infty(I)}&\lesssim \xonorm{\vert\nabla\vert^{-1/2}\nabla u_{\leq N}}{\infty}{2}
	\xonorm{\vert\nabla\vert^{1/2}(\tfrac{x}{\vert x\vert} u_{\leq N})}{\infty}{2}\nonumber
	\\ &\lesssim \xonorm{\vert\nabla\vert^{1/2}u_{\leq N}}{\infty}{2}^2
	\lesssim_u \eta N^{1-2s_c}.
	\end{align*}

We now turn to \eqref{lem:mor error}. We begin by rewriting
	\begin{align*}
	\{P_{\leq N}&\big(F(u)\big)-F(u_{\leq N}),u_{\leq N}\}_P
	\\ &=\{F(u)-F(u_{\leq N}),u_{\leq N}\}_P-\{P_{>N}\big(F(u)\big),u_{\leq N}\}_P
	\\ &=:I+II.
	\end{align*}

Writing
	$$I=\text{\O}\big\{[F(u)-F(u_{\leq N})]\nabla u_{\leq N}+u_{\leq N}\nabla[F(u)-F(u_{\leq N})]\big\}$$
and integrating by parts in the second term, we find that the contribution of $I$ to \eqref{lem:mor error} is controlled by
	\begin{align}
		&\xonorms{\nabla u_{\leq N}\big(F(u)-F(u_{\leq N})\big)}{1}					\label{Ia}
		\\ &\quad+\xonorms{\tfrac{1}{\vert x\vert}u_{\leq N}\big(F(u)-F(u_{\leq N})\big)}{1}.	\label{Ib}
	\end{align}

Similarly, writing 
	$$II=\text{\O}\big\{P_{>N}\big(F(u)\big)\nabla u_{\leq N}+\nabla P_{>N}\big(F(u)\big)u_{\leq N}\big\}$$
and integrating by parts in the second term, we find that the contribution of $II$ to \eqref{lem:mor error} is controlled by
	\begin{align}
		&\xonorms{\nabla u_{\leq N} P_{>N}\big(F(u)\big)}{1}						\label{IIa}
		\\&\quad+\xonorms{\tfrac{1}{\vert x\vert}u_{\leq N} P_{>N}\big(F(u)\big)}{1}		\label{IIb}
	\end{align}

To complete the proof of \eqref{lem:mor error}, it therefore suffices to show that the error terms \eqref{Ia} through \eqref{IIb} are acceptable, in the sense that they can be controlled by $\eta(N^{1-2s_c}+K_I).$ 

We first turn to \eqref{Ia}. Using H\"older, \eqref{a priori}, \eqref{mor low}, and \eqref{mor high}, we estimate
	\begin{align*}
	\eqref{Ia}&\lesssim \xonorm{\nabla u_{\leq N}}{2}{6}\xonorm{u_{>N}\text{\O}(u^p)}{2}{6/5}
	\\ &\lesssim \xonorm{\nabla u_{\leq N}}{2}{6}\xonorm{u_{>N}}{2}{6}\xonorm{u}{\infty}{3p/2}^p
	\\ &\lesssim_u\eta N^{1-2s_c}(1+N^{2s_c-1}K_I),
	\end{align*}
which is acceptable. 

We next turn to \eqref{Ib}. We first write
	\begin{align}
	\eqref{Ib}
	&\lesssim \xonorms{\tfrac{1}{\vert x\vert}u_{\leq N} (u_{>N})^{p+1}}{1}
	+\xonorms{\tfrac{1}{\vert x\vert}(u_{\leq N})^{p+1}u_{>N}}{1}\nonumber
	\end{align}

For the first piece, we use H\"older, Hardy, Bernstein, \eqref{a priori}, \eqref{low small}, and \eqref{mor high} to estimate
	\begin{align*}
	\xonorms{\tfrac{1}{\vert x\vert}u_{\leq N}(u_{>N})^{p+1}}{1}&
	\lesssim\xonorm{\tfrac{1}{\vert x\vert}u_{\leq N}}{\infty}{3p/2}\xonorm{u_{>N}}{2}{6}^2\xonorm{u}{\infty}{3p/2}^{p-1}
	\\&\lesssim_u\xonorm{\nabla u_{\leq N}}{\infty}{3p/2}\xonorm{u_{>N}}{2}{6}^2
	\\&\lesssim_uN^{s_c}\xonorm{\nabla u_{\leq N}}{\infty}{2}\xonorm{u_{>N}}{2}{6}^2
	\\&\lesssim_u \eta N^{1-2s_c}(1+N^{2s_c-1}K_I),
	\end{align*}
which is acceptable.

For the second piece, we use H\"older, Hardy, the chain rule, \eqref{a priori}, \eqref{mor low}, and \eqref{mor high} to estimate
	\begin{align*}
	\xonorms{\tfrac{1}{\vert x\vert}(u_{\leq N})^{p+1}u_{>N}}{1}
	&\lesssim\xonorm{\tfrac{1}{\vert x\vert}(u_{\leq N})^{p+1}}{2}{6/5}\xonorm{u_{>N}}{2}{6}
	\\ &\lesssim\xonorm{\nabla(u_{\leq N})^{p+1}}{2}{6/5}\xonorm{u_{>N}}{2}{6}
	\\ &\lesssim\xonorm{u}{\infty}{3p/2}^p\xonorm{\nabla u_{\leq N}}{2}{6}\xonorm{u_{>N}}{2}{6}
	\\ &\lesssim_u \eta N^{1-2s_c}(1+N^{2s_c-1}K_I),
	\end{align*}
which is acceptable. This completes the estimation of \eqref{Ib}.

We next turn to \eqref{IIa}. We first write
	$$\eqref{IIa}\lesssim\xonorms{\nabla u_{\leq N}P_{>N}\big(F(u_{\leq\eta^2 N})\big)}{1}+\xonorms{\nabla u_{\leq N}P_{>N}\big(F(u)-F(u_{\leq\eta^2 N})\big)}{1}.$$ 
For the first piece, we use H\"older, Bernstein, the chain rule, \eqref{a priori}, and \eqref{mor low} to estimate
	\begin{align*}
	\xonorms{\nabla u_{\leq N}P_{>N}\big(F(u_{\leq\eta^2 N})\big)}{1}
	&\lesssim N^{-1}\xonorm{\nabla u_{\leq N}}{2}{6}\xonorm{\nabla F(u_{\leq\eta^2 N})}{2}{6/5}
	\\ &\lesssim N^{-1}\xonorm{\nabla u_{\leq N}}{2}{6}\xonorm{u}{\infty}{3p/2}^p\xonorm{\nabla u_{\leq \eta^2 N}}{2}{6}
	\\ &\lesssim_u \eta N^{1-2s_c}(1+N^{2s_c-1}K_I),
	\end{align*}
which is acceptable.

For the second piece, we use H\"older, \eqref{a priori}, \eqref{mor low}, and \eqref{high eta} to estimate 
	\begin{align*}
	\xonorms{\nabla u_{\leq N}P_{>N}\big(F(u)-F(u_{\leq\eta^2 N})\big)}{1}&
	\lesssim \xonorm{\nabla u_{\leq N}}{2}{6}\xonorm{u_{>\eta^2 N}}{2}{6}\xonorm{u}{\infty}{3p/2}^p
	\\ &\lesssim_u \eta N^{1-2s_c}(1+N^{2s_c-1}K_I),
	\end{align*} 
which is acceptable. This completes the estimation of \eqref{IIa}. 

Finally, we turn to \eqref{IIb}. We first write
	$$\eqref{IIb}\lesssim\xonorms{\tfrac{1}{\vert x\vert}u_{\leq N} P_{>N}\big(F(u_{\leq \frac{N}{4}})\big)}{1}
		+\xonorms{\tfrac{1}{\vert x\vert}u_{\leq N}P_{>N}\big(F(u)-F(u_{\leq\frac{N}{4}})\big)}{1}.$$ 
For the first piece, we begin by noting that 
	$$P_{>N}\big(F(u_{\leq\frac{N}{4}})\big)=P_{>N}\big(P_{>\frac{N}{2}}(\vert u_{\leq \frac{N}{4}}\vert^p) u_{\leq\frac{N}{4}}\big).$$ 
Thus, using Cauchy--Schwarz, H\"older, Hardy, maximal function estimates, Bernstein, Sobolev embedding, \eqref{a priori}, \eqref{mor low}, and \eqref{43}, we can estimate
	\begin{align*}
	\xonorms{&\tfrac{1}{\vert x\vert}u_{\leq N}P_{>N}\big(F(u_{\leq\frac{N}{4}})\big)}{1}
	\\ &\lesssim \xonorms{\tfrac{1}{\vert x\vert}u_{\leq N}\M\big( P_{>\frac{N}{2}}(\vert u_{\leq \frac{N}{4}}\vert^p)u_{\leq\frac{N}{4}}\big)}{1}
	\\ &\lesssim \xonorms{\tfrac{1}{\vert x\vert}u_{\leq N}
	\big[\M\big(\big\vert P_{>\frac{N}{2}}(\vert u_{\leq\frac{N}{4}}\vert^p) \big\vert^2\big)\big]^{1/2}
	\big[\M\big(\vert u_{\leq \frac{N}{4}} \vert^2	\big)\big]^{1/2}}{1}
	\\ &\lesssim \xonorm{\tfrac{1}{\vert x\vert^{1/2}}u_{\leq N}}{4}{\frac{12p}{4+p}}
	\xonorm{\M\big(\big\vert P_{>\frac{N}{2}}(\vert u_{\leq\frac{N}{4}}\vert^p)\big\vert^2\big)}{1}{\frac{3p}{5p-4}}^{1/2}
	\xonorm{\tfrac{1}{\vert x\vert}\M\big(\vert u_{\leq\frac{N}{4}}\vert^2\big)}{2}{\frac{6p}{4+p}}^{1/2}
	\\ &\lesssim \xonorm{\vert\nabla\vert^{1/2}u_{\leq N}}{4}{\frac{12p}{4+p}}
	\xonorm{P_{>\frac{N}{2}}(\vert u_{\leq \frac{N}{4}}\vert^p)}{2}{\frac{6p}{5p-4}}
	\xonorm{\nabla\M(\vert u_{\leq \frac{N}{4}}\vert^2)}{2}{\frac{6p}{4+p}}^{1/2}
	\\ &\lesssim N^{-1}\xonorm{\vert\nabla\vert^{(1+s_c)/2}u_{\leq N}}{4}{3}
	\xonorm{\nabla(\vert u_{\leq\frac{N}{4}}\vert^p)}{2}{\frac{6p}{5p-4}}
	\xonorm{\nabla(\vert u_{\leq\frac{N}{4}}\vert^2)}{2}{\frac{6p}{4+p}}^{1/2}
	\\ &\lesssim N^{-1}\xonorm{\vert\nabla\vert^{(1+s_c)/2}u_{\leq N}}{4}{3}
	\xonorm{u}{\infty}{3p/2}^{p-1}\xonorm{\nabla u_{\leq\frac{N}{4}}}{2}{6}
		\xonorm{u}{\infty}{3p/2}^{1/2}\xonorm{\nabla u_{\leq \frac{N}{4}}}{2}{6}^{1/2}
	\\ &\lesssim_u N^{-1}\xonorm{\vert\nabla\vert^{(1+s_c)/2}u_{\leq N}}{4}{3}
		\xonorm{\nabla u_{\leq\frac{N}{4}}}{2}{6}
		\xonorm{\nabla u_{\leq\frac{N}{4}}}{2}{6}^{1/2}
	\\ &\lesssim_u \eta N^{1-2s_c}(1+N^{2s_c-1}K_I),
	\end{align*}	
which is acceptable. 
	
For the second piece, we write
	\begin{align}
	\xonorms{\tfrac{1}{\vert x\vert}u_{\leq N}P_{>N}\big(F(u)-F(u_{\leq\frac{N}{4}})\big)}{1}
	&\lesssim
	\xonorms{\tfrac{1}{\vert x\vert}u_{\leq N}\M\big(u_{>\frac{N}{4}}(u_{\leq \frac{N}{4}})^p\big)}{1}	\label{final2}
	\\ &\quad+\xonorms{\tfrac{1}{\vert x\vert}u_{\leq N}\M\big((u_{>\frac{N}{4}})^{p+1}\big)}{1}.		\label{final1}
	\end{align}

For \eqref{final2}, we use Cauchy--Schwarz, H\"older, Hardy, the maximal function estimate, Sobolev embedding, \eqref{a priori}, \eqref{mor low}, \eqref{mor high}, and \eqref{43} to estimate
	\begin{align*}
	&\eqref{final2}
	\\ &\lesssim\xonorms{\tfrac{1}{\vert x\vert}u_{\leq N}\big[\M\big(\vert u_{>{\frac{N}{4}}}\vert^2\vert u_{\leq\frac{N}{4}}\vert^{2(p-1)}\big)\big]^{1/2}\big[\M\big(\vert u_{\leq\frac{N}{4}}\vert^2\big)\big]^{1/2}}{1}
	\\ &\lesssim\xonorm{\tfrac{1}{\vert x\vert^{1/2}}u_{\leq N}}{4}{\frac{12p}{4+p}}\xonorm{\M\big(\vert u_{>\frac{N}{4}}\vert^2\vert u_{\leq\frac{N}{4}}\vert^{2(p-1)}\big)}{1}{\frac{3p}{5p-4}}^{1/2}
	\xonorm{\tfrac{1}{\vert x\vert}\M\big(\vert u_{\leq\frac{N}{4}}\vert^2)}{2}{\frac{6p}{4+p}}^{1/2}
	\\ &\lesssim\xonorm{\vert\nabla\vert^{1/2}u_{\leq N}}{4}{\frac{12p}{4+p}}
		\xonorm{u_{>\frac{N}{4}}}{2}{6}
		\xonorm{u}{\infty}{3p/2}^{p-1}
		\xonorm{\nabla\M\big(\vert u_{\leq\frac{N}{4}}\vert^2\big)}{2}{\frac{6p}{4+p}}^{1/2}
	\\ &\lesssim\xonorm{\vert\nabla\vert^{(1+s_c)/2}u_{\leq N}}{4}{3}
		\xonorm{u_{>\frac{N}{4}}}{2}{6}\xonorm{u}{\infty}{3p/2}^{p-1/2}
		\xonorm{\nabla u_{\leq\frac{N}{4}}}{2}{6}^{1/2}
	\\ &\lesssim_u \eta N^{1-2s_c}(1+N^{2s_c-1}K_I),
	\end{align*}
which is acceptable.

Finally, we use H\"older, Hardy, Bernstein, \eqref{a priori}, \eqref{low small}, and  \eqref{mor high} to estimate
	\begin{align*}
	\eqref{final1}&\lesssim \xonorm{\tfrac{1}{\vert x\vert}u_{\leq N}}{\infty}{3p/2}\xonorm{u_{>\frac{N}{4}}}{2}{6}^2\xonorm{u}{\infty}{3p/2}^{p-1}
	\\ &\lesssim_u \xonorm{\nabla u_{\leq N}}{\infty}{3p/2}\xonorm{u_{>\frac{N}{4}}}{2}{6}^2
	\\ &\lesssim_u N^{s_c}\xonorm{\nabla u_{\leq N}}{\infty}{2}\xonorm{u_{>\frac{N}{4}}}{2}{6}^2
	\\ &\lesssim_u \eta N^{1-2s_c}(1+N^{2s_c-1}K_I),
	\end{align*}
which is acceptable. This completes the estimation of the final error term \eqref{IIb}, which in turn completes the proof of Proposition~\ref{thm:flmor}. \end{proof}


\subsection{The quasi-soliton scenario}\label{section:soliton}
	In this section, we preclude the existence of solutions as in Theorem~\ref{thm:reduction2} for which
	\begin{equation}
	\label{eq:infinite K}
	K_{[0,\infty)}=\int_0^\infty N(t)^{3-2s_c}\,dt=\infty.
	\end{equation}
	
	We rely primarily on the frequency-localized Lin--Strauss Morawetz established in Section~\ref{section:flmor}. We also need the following lemma.
	
	\begin{lemma}[Lower bound]\label{lem:mor lb}
	Let $u:[0,\infty)\times\R^3\to\C$ be an almost periodic solution as in Theorem~\ref{thm:reduction2}. Let $I\subset[0,\infty)$. Then there exists $N_1>0$ such that for any $N>N_1$, we have
	\begin{equation}\label{eq:mor lb}
	K_I\lesssim_u \iint_{I\times\R^3}\frac{\vert u_{\leq N}(t,x)\vert^{p+2}}{\vert x\vert}\,dx\,dt,
	\end{equation}
where $K_I$ is as in \eqref{def:K}. 
	\end{lemma}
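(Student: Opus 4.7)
The plan is to mimic exactly the proof of Lemma~\ref{lem:mor lb2}, with two adjustments: we replace the high-frequency bound \eqref{lb3} with the low-frequency bound \eqref{lb2}, and we reverse the direction of the required frequency threshold (large $N$ instead of small $N$), since in the setting of Theorem~\ref{thm:reduction2} we have $\sup_t N(t)\le 1$ and we want the truncation to retain the bulk of the solution's $L_x^2$ mass near the origin.

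First I would invoke Lemma~\ref{lower bounds} to choose $C(u)>0$ and $N_1>0$ so that for every $N>N_1$ and every $t\in I$,
\begin{equation*}
N(t)^{2s_c}\int_{\vert x\vert\le\frac{C(u)}{N(t)}}\vert u_{\le N}(t,x)\vert^2\,dx\gtrsim_u 1,
\end{equation*}
which is exactly \eqref{lb2}. Next I restrict the spatial integration to the ball $\{\vert x\vert\le C(u)/N(t)\}$, on which $1/\vert x\vert\gtrsim_u N(t)$, obtaining
\begin{equation*}
\iint_{I\times\R^3}\frac{\vert u_{\le N}(t,x)\vert^{p+2}}{\vert x\vert}\,dx\,dt\gtrsim_u\int_I N(t)\int_{\vert x\vert\le\frac{C(u)}{N(t)}}\vert u_{\le N}(t,x)\vert^{p+2}\,dx\,dt.
\end{equation*}

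Then I would apply H\"older's inequality on the ball of radius $C(u)/N(t)$ (volume $\sim N(t)^{-3}$) to convert the $L^{p+2}$ integral into the $L^2$ integral that appears in \eqref{lb2}: concretely,
\begin{equation*}
\int_{\vert x\vert\le\frac{C(u)}{N(t)}}\vert u_{\le N}(t,x)\vert^{p+2}\,dx\gtrsim_u N(t)^{3p/2}\bigg(\int_{\vert x\vert\le\frac{C(u)}{N(t)}}\vert u_{\le N}(t,x)\vert^2\,dx\bigg)^{\!(p+2)/2}.
\end{equation*}
Plugging this in and applying \eqref{lb2} yields an integrand of size $N(t)^{1+3p/2-s_c(p+2)}$, and using $s_c=\tfrac32-\tfrac{2}{p}$ one checks $1+\tfrac{3p}{2}-s_c(p+2)=3-2s_c$, so the right-hand side is bounded below by a constant multiple of $\int_I N(t)^{3-2s_c}\,dt=K_I$, completing the proof.

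There is no real obstacle here; the argument is essentially a direct transcription of the proof of Lemma~\ref{lem:mor lb2}, with \eqref{lb3} swapped for \eqref{lb2} and the quantifier on $N$ flipped accordingly. The only point that requires care is verifying that the arithmetic $1+\tfrac{3p}{2}-s_c(p+2)=3-2s_c$ indeed produces the right exponent $K_I$, which is automatic from the definition of the critical regularity.
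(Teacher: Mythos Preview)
Your proposal is correct and is essentially identical to the paper's own proof: the paper likewise invokes \eqref{lb2} from Lemma~\ref{lower bounds}, restricts to the ball $\{\vert x\vert\le C(u)/N(t)\}$, applies H\"older to pass from $L^{p+2}$ to $L^2$, and simplifies the exponent (writing it as $3+ps_c - s_c(p+2)=3-2s_c$). There is nothing to add.
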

	
	\begin{proof}
We choose $C(u)$ and $N_1$ as in Lemma~\ref{lower bounds}. Then for $N>N_1$, we use H\"older and \eqref{lb2} to estimate
	\begin{align*}
	\iint_{I\times\R^3}\frac{\vert u_{\leq N}(t,x)\vert^{p+2}}{\vert x\vert}\,dx\,dt&\gtrsim_u \int_I N(t)\int_{\vert x\vert\leq\frac{C(u)}{N(t)}}\vert u_{\leq N}(t,x)\vert^{p+2}\,dx\,dt
	\\ &\gtrsim_u\int_I N(t)^{1+\frac{3p}{2}}\bigg(\int_{\vert x\vert\leq\frac{C(u)}{N(t)}}\vert u_{\leq N}(t,x)\vert^2\,dx\bigg)^{\frac{p+2}{2}}\,dt
	\\ &\gtrsim_u \int_I N(t)^{3+ps_c}\big(N(t)^{-2s_c}\big)^{\frac{p+2}{2}}\,dt
	\gtrsim_u K_I. 
	\end{align*}\end{proof}

Finally, we prove the following.

\begin{theorem}[No quasi-solitons]\label{thm:soliton} There are no almost periodic solutions as in Theorem~\ref{thm:reduction2} such that \eqref{eq:infinite K} holds.  
\end{theorem}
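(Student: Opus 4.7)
The plan is to argue by contradiction in the same way as in the proof of Theorem~\ref{thm:soliton2}, combining the frequency-localized Morawetz upper bound (Proposition~\ref{thm:flmor}) with the Morawetz lower bound (Lemma~\ref{lem:mor lb}) to derive a uniform bound on $K_I$ that contradicts \eqref{eq:infinite K} once $I$ is taken sufficiently large.

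More concretely, I would suppose that $u$ is such a solution and let $I\subset[0,\infty)$ be an arbitrary compact time interval that is a contiguous union of characteristic subintervals $J_k$. Fix a small parameter $\eta>0$ to be chosen at the end. Both Proposition~\ref{thm:flmor} and Lemma~\ref{lem:mor lb} apply provided $N$ is taken sufficiently large; so I would fix some $N>\max\{N_0(\eta),N_1\}$, where $N_0(\eta)$ is the threshold from Proposition~\ref{thm:flmor} and $N_1$ is the threshold from Lemma~\ref{lem:mor lb}. Chaining the two inequalities then yields
\begin{equation*}
K_I\;\lesssim_u\;\iint_{I\times\R^3}\frac{\vert u_{\leq N}(t,x)\vert^{p+2}}{\vert x\vert}\,dx\,dt\;\lesssim_u\;\eta\bigl(N^{1-2s_c}+K_I\bigr).
\end{equation*}
Choosing $\eta$ sufficiently small (depending only on $u$) allows one to absorb the $\eta K_I$ term into the left-hand side, leading to
\begin{equation*}
K_I\;\lesssim_u\;N^{1-2s_c},
\end{equation*}
where the bound on the right is finite and independent of $I$ (since $N$ was fixed after $\eta$ was chosen). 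On the other hand, \eqref{eq:infinite K} states that $K_{[0,\infty)}=\infty$, so by monotone convergence $K_{I_n}\to\infty$ along any exhausting sequence $I_n\nearrow[0,\infty)$ of such compact intervals. This contradicts the uniform bound and completes the proof.

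I do not expect a real obstacle here: the two key analytical inputs (Proposition~\ref{thm:flmor} and Lemma~\ref{lem:mor lb}) have already been established, and the only point requiring care is the order of quantifiers. In contrast with the $s_c>1/2$ case of Theorem~\ref{thm:soliton2}, where $1-2s_c<0$ forces one to take $N$ small, here $1-2s_c>0$ forces $N$ to be large, but this causes no difficulty because $N$ only needs to be fixed (not optimized as a function of $I$) in order to make $N^{1-2s_c}$ a constant bound against which the divergence of $K_I$ produces the contradiction.
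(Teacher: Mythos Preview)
Your proposal is correct and matches the paper's own proof essentially line for line: combine Proposition~\ref{thm:flmor} and Lemma~\ref{lem:mor lb} for $N$ sufficiently large to obtain $K_I\lesssim_u\eta(N^{1-2s_c}+K_I)$, absorb by choosing $\eta$ small, and then contradict \eqref{eq:infinite K} by taking $I$ large. Your remark about the order of quantifiers and the sign of $1-2s_c$ is exactly the relevant point.
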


\begin{proof}
Suppose $u$ were such a solution. Let $\eta>0$ and let $I\subset[0,\infty)$ be a compact time interval, which is a contiguous union of characteristic subintervals. 

Combining \eqref{eq:flmor} and \eqref{eq:mor lb}, we find that for $N$ sufficiently large, we have
	$$K_I\lesssim_u \eta(N^{1-2s_c}+K_I).$$
Choosing $\eta$ sufficiently small, we deduce $K_I\lesssim_u N^{1-2s_c}$ uniformly in $I$. We now contradict \eqref{eq:infinite K} by taking $I$ sufficiently large inside of $[0,\infty)$. This completes the proof of Theorem~\ref{thm:soliton}.
\end{proof}

\end{document}